\numberwithin{equation}{subsection}
\newtheorem{propo}{Proposition}[section]
\newtheorem{theor}[propo]{Theorem}
\newtheorem{lemma}[propo]{Lemma}
\theoremstyle{definition}
\theoremstyle{remark}
\let\oldmarginpar\marginpar
\renewcommand\marginpar[1]{\oldmarginpar{\footnotesize #1}}
\newcommand{\CC}{\mathbb{C}}
\newcommand{\QQ}{\mathbb{Q}}
\newcommand{\ZZ}{\mathbb{Z}}
\newcommand{\RR}{\mathbb{R}}
\newcommand{\kk}{\mathbb{K}}
\newcommand{\g}{\mathfrak{g}}
\newcommand{\Hom}{\operatorname{Hom}}
\newcommand{\Ker}{\operatorname{Ker}}
\newcommand{\ev}{\operatorname{ev}}
\newcommand{\Der}{\operatorname{Der}}
\newcommand{\card}{\operatorname{card}}
\newcommand{\id}{\operatorname{id}}
\newcommand{\GL}{\operatorname{GL}}
\newcommand{\Mat}{\operatorname{Mat}}
\newcommand{\Map}{\operatorname{Map}}
\newcommand{\Homeo}{\operatorname{Homeo}}
\newcommand{\tr}{\operatorname{tr}}
\newcommand{\Lie}{\operatorname{Lie}}
\newcommand{\pr}{\operatorname{pr}}
\newcommand{\Alg}{\mathcal{A}lg}
\newcommand{\CAlg}{\mathcal{C\!A}lg}
\newcommand{\bigdot}{\bullet}
\newcommand{\mediumdot}{{ \displaystyle \mathop{ \ \ }^{\hbox{$\centerdot$}}}}
\newcommand{\bord}{\nu}
\newcommand{\drob}{\overline{\nu}}
\newcommand{\bracket}[2]{\left\{#1,#2\right\}}
\newcommand{\double}[2]{\left\{\!\!\left\{#1,#2\right\}\!\!\right\}}
\newcommand{\triple}[3]{\left\{\!\!\left\{#1,#2,#3\right\}\!\!\right\}}
\begin{document}

\title[Quasi-Poisson structures on representation spaces of surfaces]{
Quasi-Poisson structures on \\ representation spaces of surfaces}

    \author[Gw\'ena\"el Massuyeau]{Gw\'ena\"el Massuyeau}
    \address{
    Gw\'ena\"el Massuyeau \newline
    \indent IRMA,    Universit\'e de Strasbourg \& CNRS \newline
    \indent 7 rue Ren\'e Descartes \newline
    \indent 67084 Strasbourg, France \newline
    \indent $\mathtt{massuyeau@math.unistra.fr}$}

    \author[Vladimir Turaev]{Vladimir Turaev}
    \address{
    Vladimir Turaev \newline
    \indent   Department of Mathematics \newline
    \indent  Indiana University \newline
    \indent Bloomington IN47405, USA\newline
    \indent $\mathtt{vturaev@yahoo.com}$}

\begin{abstract}
Given an oriented surface $\Sigma$ with base point $\ast\in \partial \Sigma$,  
we introduce for all $N\geq 1$, a  canonical quasi-Poisson
bracket on the space of $N $-dimensional linear representations of
$\pi_1(\Sigma, \ast)$.  Our bracket extends the well-known Poisson bracket on   $\GL_N$-invariant   
functions on this space.
 Our main tool is   a natural structure of a quasi-Poisson double algebra (in the sense of M. Van den Bergh) 
 on the group algebra of $\pi_1(\Sigma, \ast)$.
\end{abstract}

\maketitle

\section {Introduction}

 The   representation space    ${\mathcal H} =\Hom(\pi, G) $  consisting   of all homomorphisms from
the fundamental group   $\pi$   of a compact oriented surface
  to a Lie group $G$ is a rich source of   geometry. 
  The group $G$ acts on ${\mathcal H}$ by conjugations and the quotient ${\mathcal H}/G$  can be
identified with a moduli space of flat connections and with a moduli
space of holomorphic vector bundles (for appropriate $G$). For
closed surfaces,   the space ${\mathcal H}/G$ carries
 symplectic geometry. The classical instances are the
Weil--Petersson symplectic structure on the Teichm\"uller space (for
  $G=\operatorname{PSL}(2,\RR)$) and the  Atiyah--Bott symplectic
structure for compact $G$  endowed with a nondegenerate $ {\rm {Ad}}
(G)$-invariant   symmetric  bilinear form on the   corresponding
  Lie algebra. A systematic   approach to the symplectic
structure on ${\mathcal H}/G$ was introduced by W. Goldman
\cite{Go1, Go2} in extension of the work of S. Wolpert \cite{Wo}.
Goldman defined a Lie bracket in the free abelian group generated by
the set of conjugacy classes of elements of $\pi$ and used it to
compute the Poisson structure on ${\mathcal H}/G$ induced by the
symplectic structure.  Surfaces with boundary   have a
canonical Poisson structure on the quotient  ${\mathcal H}/G$.   It was described in
\cite{FR} in terms of ciliated fat graphs and in \cite{GHJW} in
terms of group systems, see also   \cite{AMM},  \cite{La}  and the surveys   \cite{Au},  \cite{Go3}, \cite{Hu}.

In this paper we show that for surfaces with boundary and
$G=\GL_N(\RR)$ with $N\geq 1$, there is a canonical   Poisson-type
structure on
  $\mathcal H$.   More precisely,
consider  a
 compact  oriented surface $\Sigma$  with non-void boundary. Set  $\pi=\pi_1(\Sigma, \ast)$ with $\ast \in
\partial \Sigma$  and  ${\mathcal H} =\Hom(\pi, G)$.
 Since   $\pi$ is a free group of a finite rank, $n$, a choice
of a basis of $\pi$ yields a bijection ${\mathcal H} \cong G^{n}$.
This induces a structure of a smooth manifold on   $\mathcal H$
independent of the choice of the basis. Moreover, the action of $G$
on $\mathcal H$ by conjugations is smooth.  A. Alekseev, Y.
Kosmann-Schwarzbach, and E. Meinrenken \cite{AKsM} introduced a
notion of a quasi-Poisson structure on a smooth manifold   endowed
with a smooth action of a Lie group. We construct here
  a canonical quasi-Poisson structure on
${\mathcal H}$.  One may think of this structure   as of a
skew-symmetric bracket $\{-,-\}$  in the algebra $C^\infty (\mathcal
H)$ of smooth $\RR$-valued functions on $\mathcal H$ satisfying the
Leibniz identity and the modified Jacobi identity
$$
\bracket{f} {\bracket{g }{h}}  + \bracket{g} {\bracket{h }{f}}
+   \bracket {h} {\bracket{f }{g}}   =   \phi(f,g,h)
$$
 for any  $f,g,h\in C^\infty (\mathcal H)$. 
 Here   $\phi \in \Lambda^3 \mathfrak{gl}_N(\RR)$  is the  Cartan trivector
and the action of the Lie algebra $\mathfrak{gl}_N(\RR)$ of $G=\GL_N(\RR)$ on $C^\infty (\mathcal
H)$ is induced
 by the action of $G$ on $\mathcal H$    by
conjugations. Both $G$ and the group $\Homeo(\Sigma, \ast)$ of
isotopy classes of orientation-preserving self-homeomorphisms of the
pair $(\Sigma, \ast)$ act on ${\mathcal H}$ by bracket-preserving
diffeomorphisms.  In particular, the bracket is preserved under the
Dehn twists about simple closed curves in $\Sigma$. 
More generally, an orientation-preserving basepoint-preserving
homeomorphism of surfaces induces a   diffeomorphism of the
corresponding representation spaces commuting with the action of $G$
and preserving the quasi-Poisson bracket.

The  usual Poisson structure on $\mathcal H/G$ is determined by our
quasi-Poisson structure on $\mathcal H$ as follows.  By a
smooth function  on $\mathcal H/G$ we mean  a $G$-invariant smooth
function on $\mathcal H$.  The  subalgebra $C^\infty (\mathcal H)^G
\subset C^\infty (\mathcal H)$ of smooth functions  on $\mathcal
H/G$  is closed under our quasi-Poisson bracket $\{-,-\}$ and the
restriction of $\{-,-\}$ to $C^\infty (\mathcal H)^G$ is a Poisson
bracket.   The  latter bracket  divided by two is    the
usual Poisson bracket in the algebra of smooth functions on
$\mathcal H/G$.

 Quasi-Poisson structures on $\mathcal H\cong G^n$ were first constructed in  \cite{AKsM}. 
The approach of \cite{AKsM} consists in  producing explicit quasi-Poisson structures on   $G$ 
and  on $G \times G$,  then combining copies of these structures 
into a quasi-Poisson structure on $G^n$ by a process called ``fusion'', 
and finally identifying $\mathcal H$ with $ G^n$ 
via a choice of a  basis of $\pi$.
This construction produces a family of quasi-Poisson structures on $\mathcal H $ numerated by certain bases of~$\pi$.
These   structures  a priori are not invariant under the action of  $\Homeo(\Sigma, \ast)$.
Nevertheless,   we show that   our quasi-Poisson structure on $\mathcal H $ 
coincides with that of \cite{AKsM} for a specific choice of a basis of $\pi$.

Our construction of the  quasi-Poisson  structure   on $\mathcal H$
proceeds in two steps.  First,  we introduce an abstract notion of a
quasi-Poisson algebra and show  how to turn the coordinate
algebra of ${\mathcal H}$ (in the sense of algebraic geometry) into
a  quasi-Poisson algebra.   Then  we extend the quasi-Poisson
bracket in the coordinate algebra of ${\mathcal H}$ to all smooth
functions on  $\mathcal H$.

The definitions and results introduced at the first step apply to
both compact and non-compact surfaces and hold  over any commutative
ring $\kk$ rather than~$\RR$.
  To work in this generality, we replace the
coordinate algebras as above with   more general algebras $A_N$
derived from the group ring $A=\kk\pi$ of $\pi$. The key new point
is a relationship between
  Fox pairings in $A$   and the theory of 
quasi-Poisson double
  brackets   due to M.\ Van den Bergh
\cite{VdB}.    Namely,  we show that the   Fox pairing in   $A $
defined in \cite{Tu} induces a quasi-Poisson double bracket in~$A$.
The Van den Bergh theory,   which  provides a version of
Poisson geometry for non-commutative algebras,   produces then a
quasi-Poisson structure on $A_N$.

The present work opens a number of further directions of study:
compute our quasi-Poisson bracket via  local geometry of
representation spaces;  
compute the bracket in terms of  fat graphs
 and compare it to the construction of Fock and Rosly \cite{FR}; 
extend our results   to other Lie groups  or algebraic groups; etc.  
In a sequel to the paper the authors will discuss a
high-dimensional generalization of the quasi-Poisson bracket
 in the context of string  topology, see  Remark \ref{Chas}.4. 

Our exposition  is essentially self-contained and   proceeds as
follows. We define quasi-Poisson algebras in
Section~\ref{sectionQUASIPOISSON}.   In
Section~\ref{sectionalgebrasmaintheorem} we discuss the algebras
$A_N$  and formulate our main   theorem.   The proof of this theorem
occupies Sections~\ref{VdB}--\ref{PMT}. We recall Van den Bergh's
theory of double brackets (Section~\ref{VdB}), discuss Fox pairings
in Hopf algebras (Section~\ref{sectionFOX}), and show how to derive
double brackets from Fox pairings (Section~\ref{FFPTDB}). Then we
recall the homotopy intersection pairing of \cite{Tu} and prove the
main theorem (Section~\ref{PMT}).  For compact surfaces, we derive
from this theorem   a natural quasi-Poisson structure on    the
representation manifold $\mathcal H$ and  explicitly compute this structure 
in  certain coordinates on $\mathcal H$ (Section~\ref{QPMRS}).
In Section~\ref{TBE+} we consider moment maps and discuss surfaces without boundary.  
In Section~\ref{GFG} we extend a part of our constructions to Fuchsian groups.  
 We conclude with two appendices.
In Appendix \ref{appendix_schemes} we discuss in more detail
certain group actions and Lie algebra  actions on $A_N$ introduced
in Section~\ref{sectionalgebrasmaintheorem}.  
In Appendix \ref{appendix_comparison} we compare 
the quasi-Poisson structures on $\mathcal H$ introduced in this paper and in \cite{AKsM}.

Throughout the paper we fix a commutative ring $\kk$. Unless
 stated otherwise, by a module we shall mean a $\kk$-module,  and by a linear map of modules we
mean  a $\kk$-linear map. By an algebra we shall mean an associative unital $\kk$-algebra.\\

 {\it Acknowledgements.} The work of G.\ Massuyeau was
partially supported by the French ANR research project
ANR-08-JCJC-0114-01.  G.M.\ would like to thank his
colleagues in Strasbourg for helpful conversations: P. Baumann, C.
Gasbarri, C. Kassel  and C. Noot--Huyghe. The work of V.\ Turaev was partially supported
by the NSF grant DMS-0904262.  
V.T. would like to thank A. Ramadoss  for   useful discussions.

\section{Quasi-Poisson algebras}\label{sectionQUASIPOISSON}

Inspired by the theory of quasi-Poisson manifolds   \cite{AKsM},
  we introduce in this section quasi-Poisson  algebras
generalizing the familiar Poisson algebras.

  \subsection{Poisson algebras}\label{QPA0}  Recall that a {\it derivation} of an algebra  ${\mathcal A}$
 is a  linear map $d:{\mathcal A}\to {\mathcal A}$ such that
   $d(ab)= d(a) b+ a d(b)$    for all $a,b \in  {\mathcal A}$.  A \emph{Poisson algebra} is an algebra ${\mathcal A}$ endowed with a skew-symmetric
  bilinear form $\{ -, -\}:{\mathcal A}\times {\mathcal A} \to {\mathcal A}$ which is a
derivation in each variable and satisfies the   Jacobi identity: for
all $a,b,c\in {\mathcal A}$,
$$
\bracket{a} {\bracket{b }{c}}  + \bracket{b} {\bracket{c }{a}} + \bracket {c} {\bracket{a }{b}} =   0 .
$$
 The form $\{ -, -\}$ satisfying these conditions is called a
\emph{Poisson bracket}. Note for the record that  for all $a,b ,c
\in {\mathcal A}$,
\begin{equation}\label{ssd}\bracket{b }{a}=-\bracket{a }{b}  \quad   {\rm
{and}} \quad \bracket{ab }{c}  = \bracket{a }{c} b+ a \bracket{b
}{c}.\end{equation}

  We emphasize that   we do not require the self-annihilating relation $\{a,a\}=0$ for $a\in {\mathcal A}$.
Likewise, speaking about Lie brackets we require skew-symmetry and
the Jacobi relation but not the self-annihilating relation. The
reader   uncomfortable with these conventions may assume from now on
that   $2 \in \kk$ is invertible so that the self-annihilating
relation follows from  the skew-symmetry.

    \subsection{Quasi-Poisson $\g$-algebras}\label{QPA2}  The derivations
   of an algebra ${\mathcal A}$ form a Lie algebra, $\Der ({\mathcal A})$, with Lie bracket $[d_1,
   d_2]=d_1d_2-d_2d_1$ for all $d_1, d_2\in \Der({\mathcal A})$.
   A   \emph{(left) action}
   of  a Lie algebra $\g$ on ${\mathcal A}$ is a Lie algebra homomorphism
   $\g\to \Der ({\mathcal A})$. Given such an action, we say that ${\mathcal A}$ is an
   \emph{algebra over $\g$} or, shorter, a \emph{$\g$-algebra}. An element $a\in {\mathcal A}$ is   \emph{$\g$-invariant} if $wa=0$
   for all $w\in \g$. The $\g$-invariant elements of ${\mathcal A}$ form a
   subalgebra of ${\mathcal A}$ denoted ${\mathcal A}^\g$.

An example of a $\g$-algebra is provided by the tensor algebra
$\displaystyle{\oplus_{n\geq 0} \g^{\otimes n}}$ where each $w\in \g$ acts by
\begin{equation}\label{action}
w(w_1\otimes \cdots \otimes w_n)=\sum_{i=1}^n w_1 \otimes \cdots
\otimes w_{i-1} \otimes [w, w_i] \otimes w_{i+1} \otimes\cdots
\otimes w_n
\end{equation}
for any $w_1,\ldots, w_n\in \g$. A vector $a\in \g^{\otimes n}$ is
\emph{skew-symmetric} if any transposition of tensor factors carries
$a$ into $-a$. The  module of  $\g$-invariant skew-symmetric vectors
of $\g^{\otimes n}$ is denoted by $\wedge^n_{\g}$. We shall be
specifically interested in this module for $n=3$.  In
particular, a nonsingular $\g$-invariant symmetric bilinear form
$\cdot:\g \times \g \to \kk$ defines a vector $\phi \in
\wedge^3_{\g}$ which is the skew-symmetric trilinear form
\begin{equation}\label{Cartan}
 \g \otimes \g \otimes \g \longrightarrow \kk, \ (w_1,w_2,w_3) \longmapsto w_1 \cdot [w_2,w_3]
\end{equation}
 viewed as a skew-symmetric element of $(\g^*)^{\otimes
3}\simeq \g^{\otimes 3}$ through the isomorphism $\g \to \g^*, w
\mapsto w\cdot(-)$. We call $\phi$  the \emph{Cartan trivector}
associated to the form $\cdot$.

Fix a Lie algebra $ \g $ and a    vector $\phi\in \wedge^3_{\g}$. A
$ \g$-algebra ${\mathcal A}$ is \emph{quasi-Poisson} if ${\mathcal
A}$ is endowed with a skew-symmetric  bilinear form $\{ -,
-\}:{\mathcal A}\times {\mathcal A} \to {\mathcal A}$ which is a
derivation in each variable and satisfies the following two
identities:
\begin{equation}\label{ssdM--}
w \bracket{a} {b}  = \bracket{wa }{b}  +  \bracket{a}{wb}
\end{equation}
for all $w\in \g$, $a,b\in {\mathcal A}$ and
\begin{equation}\label{ssdM}
\bracket{a} {\bracket{b }{c}}  + \bracket{b} {\bracket{c }{a}} + \bracket {c} {\bracket{a }{b}} =   \phi(a, b, c)
\end{equation}
for all $a,b,c\in {\mathcal A}$. Here    $\phi(a, b, c)\in {\mathcal
A} $ is the image of $\phi\in \wedge^3_\g \subset \g^{\otimes 3}$
under the linear map $\g^{\otimes 3} \to {\mathcal A} $ carrying
$w_1\otimes w_2 \otimes w_3$ to $(w_1 a) (w_2 b) (w_3c)$ for all
$w_1, w_2 , w_3 \in \g$.

   We call the form $\{ -, -\}$ a \emph{quasi-Poisson bracket  in
${\mathcal A}$ associated with $\phi$}. Clearly, $\{{\mathcal A}^\g,
{\mathcal A}^\g\}\subset {\mathcal A}^\g$ and the restriction of $\{
-, -\}$ to ${\mathcal A}^\g$ is a Poisson bracket in ${\mathcal
A}^\g$. Thus, a quasi-Poisson structure in ${\mathcal A}$ induces a
Poisson structure in ${\mathcal A}^\g$. For  $\g=\{0\}$,   a
quasi-Poisson structure in ${\mathcal A}$ is just a
  Poisson structure in ${\mathcal A}$.

 \subsection{Quasi-Poisson  algebras over Lie pairs}\label{QPA3} The
 notion of a  quasi-Poisson algebra over a Lie algebra may be
 generalized bringing a group into the picture. To do it, we
 introduce the following notion:  a
 \emph{Lie pair} is  a pair $(G, \g)$   where $G$ is a group   and
 $\g$ is a  Lie algebra  endowed with a (left) action of $G$ on $\g$ by Lie algebra
 automorphisms. The  action  is denoted by  $ w\mapsto  {}^g\! w$ for $w\in \g$ and $g\in G$.

 For example, a pair consisting of the trivial group $G=\{1\}$ and  any Lie algebra is  a Lie pair.
 The pair consisting of any group and the trivial Lie algebra   $\g=\{0\}$   is a Lie pair.
For $\kk=\RR$,   examples of Lie pairs are provided by pairs
(a Lie group $G$, the Lie algebra  $\g$ of $G$ with the (left) adjoint action of $G$).
Further examples of Lie pairs are given in Section~\ref{QPA5}.

 An \emph{algebra over a Lie pair} $(G,\g)$ or, shorter, a
\emph{$(G,\g)$-algebra} is a $\g$-algebra ${\mathcal A}$ endowed
with a (left) action
   of $G$ by algebra automorphisms
  such that
  $ {}^g \! w(a)=g\, w (g^{-1}  a)  $ for all $w\in \g$, $g\in G$, $a\in {\mathcal A}$.
An element $a\in  {\mathcal A}$ is \emph{$G$-invariant} if $ga=a$
for all $g\in G$. The  $G$-invariant elements of ${\mathcal A}$ form a
 subalgebra of ${\mathcal A}$ denoted ${\mathcal A}^G$.
 Note that ${\mathcal A}^\g$ is stable under the action of $G$;
  generally speaking $ {\mathcal A}^\g\neq {\mathcal A}^G$.

  For example, the tensor
algebra $\oplus_{n\geq 0} \g^{\otimes n}$ is a $(G,\g)$-algebra
where $\g$ acts by \eqref{action} and each $g\in G$ acts  by
$$g(w_1\otimes w_2\otimes \cdots \otimes w_n)= {}^g\!w_1\otimes {}^g\!w_2\otimes \cdots \otimes {}^g\!w_n  $$ for any   $w_1,\ldots, w_n\in
\g$.

  An algebra ${\mathcal A}$ over a Lie pair $(G,\g)$  is \emph{quasi-Poisson} if ${\mathcal A}$,
considered as a $\g$-algebra, has a quasi-Poisson bracket $\{ -,
-\}$  associated with a $G$-invariant trivector $\phi\in
\wedge^3_\g$ and for all $g\in G$,  $a,b\in {\mathcal A}$,
\begin{equation}\label{ssdM++} g
\bracket{a} {b}  = \bracket{ga }{gb}.
\end{equation}
 For $G=\{1\}$, we recover the notion of a quasi-Poisson algebra over  $\g$.
For  $\g=\{0\}$,  a quasi-Poisson algebra over $(G,\g)$ is nothing
but a Poisson algebra endowed with an action of $G$   by Poisson
algebra automorphisms.

 \subsection{The Lie pair $(G_N,\g_N)$}\label{QPA5}
 We will focus   in the sequel  on the  quasi-Poisson algebras over the
 Lie pair $(G_N,\g_N)$ where $N\geq 1$ is an integer,
 $G_N=\GL_N(\kk)$ is the $N$-th general linear group, and $\g_N=\mathfrak{gl}_N(\kk)$
 is the Lie algebra  of    $N\times N$-matrices with entries in
 $\kk$.
The Lie bracket in $\g_N$ is given by $[u,v]=uv-vu$ and    $G_N$
acts on $\g_N$  by ${}^g\! v=gvg^{-1}$. For $i,j\in \{1, \ldots,
N\}$ let $f_{ij}\in \g_N$ be the elementary matrix whose $(i,j)$-th
entry is $1$ and all other entries are equal to zero.  We consider
the tensor
\begin{equation}\label{phiN}
\phi_N=-f_{ij}\otimes f_{jk}\otimes f_{ki} + f_{jk}\otimes f_{ij}\otimes f_{ki} \ \in \g_N^{\otimes 3}.
\end{equation}
 Here  and below we sum up over all repeating indices.  The
tensor $\phi_N$ is skew-symmetric,  $G_N$-invariant, and $\g_N$-invariant.
  Indeed, $\phi_N$ is the Cartan trivector \eqref{Cartan}
determined by the trace pairing $v\cdot w = \tr(vw)$ in   $\g_N$.
 Unless explicitly stated to the contrary, by a  \emph{quasi-Poisson bracket} in a $(G_N,\g_N)$-algebra
 we will mean a quasi-Poisson bracket associated with $\phi_N$.

\section{The algebra  $A_N$ and the main theorem}\label{sectionalgebrasmaintheorem}

In this section, we derive from an arbitrary algebra $A $ a sequence
of commutative algebras $(A_N)_{N\geq 1}$.  Then we apply this
construction to  the group algebras  of the fundamental groups   of
  surfaces and state our main theorem.

\subsection{The algebra  $A_N$}\label{AMT0} Let $A$ be  an algebra.
Following  \cite{Cb}, \cite{VdB},  we define   a sequence of
commutative algebras $A_1,A_2,\ldots$  For  $N\geq 1$, the
 algebra $A_N$ is  generated by
the symbols $a_{ij}$ where $a$ runs over $A$ and $i,j $ run over the
set $ \{1,2, \ldots, N\}$. These generators commute with each other
and satisfy the following relations: $1_{ij}=\delta_{ij}$ where
$\delta_{ij}$ is the Kronecker delta; for all $a,b\in A$, $k\in
\kk$, and $i,j\in \{1,2, \ldots, N\}$,
$$(ka)_{ij}=k a_{ij}, \quad  (a + b)_{ij}=a_{ij}+ b_{ij}, \quad {\rm {and}} \quad (ab)_{ij}=  a_{il} b_{lj}. $$
(In accordance with our conventions, in the third relation,  we sum up over the repeating index $l$.)
The construction of $A_N$ is functorial: any  algebra homomorphism
$f:A\to A'$ induces an algebra homomorphism
$f_N:A_N\to A'_N$ by $f_N(a_{ij})= (f(a))_{ij}$ for all $a\in A$ and $i,j \in  \{1,  \ldots, N\}$.

 The definition of $A_N$ is designed so  that the functor
 ${\Alg}\to {\CAlg}, A\mapsto A_N$ is left adjoint to the functor ${\CAlg} \to {\Alg}, B\mapsto \Mat_N(B)$.
 Here $ {\Alg}$ and   ${\CAlg}$   are  the categories of algebras and commutative algebras, respectively,
 and $\Mat_N(B)$ is the algebra of $N\times N$-matrices over $B$.
 In other words,   for any commutative algebra~$B$, there is  a canonical bijection
 \begin{equation}\label{adjunction}
 \Hom_{\CAlg} (A_N, B)  \simeq  \Hom_{\Alg} (A, \Mat_N(B))
 \end{equation}
 which is natural in $A$ and $B$.
  The bijection \eqref{adjunction}  carries any $r:A_N\to B$
to the homomorphism $ A \to \Mat_N(B) $ sending any $a \in A$ to
 the $N\times N$-matrix $(r(a_{ij}))_{i,j}$.
  The inverse bijection carries any $s:A\to \Mat_N(B)$ to the
 homomorphism $ A_N\to B$ sending a generator $a_{ij}$ to
 the $(i,j)$-th term of the matrix $s(a)$ for all $a\in A$.
The existence of a natural isomorphism $\Hom_{\CAlg} (A_N, -) \simeq
\Hom_{\Alg} (A, \Mat_N(-))$ can be rephrased in the language of algebraic geometry (see
\cite[pp.\ 17--18]{Do} or \cite[\S I.1.3]{Ja}):
 there is an affine scheme (over $\kk$) whose set of $B$-points  is $\Hom_{\Alg} (A, \Mat_N(B))$
 for any commutative algebra $B$, and $A_N$ is the coordinate algebra of that scheme.

 The linear map $\tr=\tr_N:A\to A_N$ carrying any $a\in A$ to $
\tr(a)=\sum_{i=1}^N a_{ii}\in A_N$ is   called the \emph{trace}. It
is easy to check that $\tr([A,A])=0$ where
  $[A,A]$ is the  submodule of $A$ spanned by the set
$\{ab-ba\}_{ a,b\in A}$. Hence the trace induces a linear map $
A/[A,A] \to A_N$ also denoted by $\tr$.  For any  algebra
homomorphism $f:A\to A'$, the   homomorphism $f_N:A_N \to A'_N$
satisfies $ f_N \tr = \tr f:A\to A'_N$.

The algebra $A_1$ corresponding to $N=1$ is the maximal commutative
algebra obtained as the quotient of $A$. More precisely,  the
homomorphism $\tr:A\to A_1$ is surjective and its kernel is the
two-sided ideal of $A$ generated by $[A,A]$.

\subsection{ Actions on   $A_N$}\label{AMT0++}

For all $N\geq 1$, we   turn the algebra $A_N$ of Section \ref{AMT0}
into an algebra over the Lie pair $(G_N, \g_N) =(\GL_N(\kk),
\mathfrak{gl}_N(\kk))$ introduced in Section~\ref{QPA5}.   The
action of $G_N$ on   $A_N$ by algebra automorphisms is defined
by
\begin{equation}\label{GN_on_ANold}
g a_{ij}=
(g^{-1})_{i,k}\, g_{l,j}\,  a_{kl}
\end{equation}
for all $g=(g_{k,l})_{k,l=1}^N\in G_N$, $a\in A$ and $i,j\in \{1,2,
\ldots, N\}$. Though one usually writes numerical coefficients to
the left of the variables, as   on the right-hand side of
\eqref{GN_on_ANold}, it is easier to remember this formula in the
following equivalent form:
\begin{equation}\label{GN_on_AN}
g a_{ij}= (g^{-1})_{i,k}\,
 a_{kl} \, g_{l,j}.
\end{equation}
In the sequel we rather write our formulas in the latter form.
The Lie algebra $\g_N$ acts  by derivations  on the algebra $A_N$ by
\begin{equation}\label{gN_on_AN}
wa_{ij}= a_{is} w_{s,j}-
w_{i,s} a_{sj}
\end{equation}
for all   $w=(w_{k,l})_{k,l=1}^N\in \g_N$, $a\in A$ and $i,j\in \{1,2, \ldots, N\}$.
  In terms of the elementary matrices $f_{kl}\in \g_N$ we have
$f_{kl}a_{ij}=\delta_{lj} a_{ik}-\delta_{ik} a_{lj}$ for all
$i,j,k,l$. Direct computations show that formulas \eqref{GN_on_AN}
and \eqref{gN_on_AN} are compatible with the defining relations of
$A_N$ and turn  $A_N$ into a $(G_N,\g_N)$-algebra.
It is clear that for   any algebra
homomorphism $ A\to A'$, the induced algebra homomorphism $ A_N \to
A'_N$ is $(G_N,\g_N)$-equivariant.

The action of $G_N$ on $A_N$ has the following origin. Given a
commutative algebra $B$, the group $G_N$ acts on $\Mat_N(B)$   by
$M\mapsto gMg^{-1}$ for $M\in \Mat_N(B)$ and $g\in G_N$. This
induces a (left) action of $G_N$ on $ \Hom_{\Alg} (A, \Mat_N(B))$
and, via   \eqref{adjunction},   a (left) action of $G_N$ on
$\Hom_{\CAlg} (A_N, B)$. The latter action is natural in $B$ and
therefore, by the  Yoneda  lemma,  it induces a (left) action of $G_N$ on $A_N$ so that
\begin{equation}\label{unique_action}
r(gx)=( g^{-1} r)(x)  \quad
\hbox{for   any $r\in \Hom_{\CAlg} (A_N, B)$,  $g\in G_N$, $x\in A_N$}.
\end{equation}
For $B=A_N$, $r=\id: {A_N}\to B$, and $x=a_{ij}$ with $a\in A$ and
$i,j\in \{1,\dots,N\}$, we obtain $g a_{ij}   = (g^{-1} \id)(a_{ij})
=  (g^{-1})_{i,k} a_{kl} g_{l,j}$ as   in our  definition
\eqref{GN_on_AN} of the action of $G_N$ on $A_N$.  
The actions of $G_N$ and $\g_N$   on  $A_N$   are further studied in Appendix~\ref{appendix_schemes}.

The algebra $A_N$ has three useful subalgebras: $A_N^{\g_N}$,
$A_N^{G_N}$, and the  algebra $A^t_N$ generated   by $\tr(A) \subset
A_N$.  It is easy to check that $A^t_N\subset A_N^{\g_N}\cap
A_N^{G_N}$. When $\kk $ is a field of characteristic zero   and the
algebra $A$ is finitely generated, we have  $A^t_N=A_N^{G_N}$ 
and therefore $A_N^{G_N}\subset A_N^{\g_N}$; see  \cite{LbP}, \cite{Cb}.

\subsection{The  case of a group algebra}\label{AMT--1} Given a group $\pi$, we can
 apply the constructions   above   to the group
algebra   $A=\kk \pi$.   This gives   for each $N\geq 1$  a
commutative $(G_N,\g_N)$-algebra $A_N=(\kk \pi)_N$   generated by
the commuting symbols $a_{ij}$ where $a\in \pi$ and $i,j \in \{1,
\ldots, N\}$ subject to the relations $1_{ij} = \delta_{ij}$ and
$(ab)_{ij}=  a_{il} b_{lj}$ for all $a,b\in \pi$ and $i,j \in \{1,
\ldots, N\}$. Note that $A/[A,A]$ is the free  
$\kk$-module with basis $\check \pi$ where $\check \pi$ is the set
of the conjugacy classes of elements of~$\pi$.  For all $N\geq 1$,
we have the linear  map $\tr:\kk\check \pi \to A_N$.
The discussion at the end of Section~\ref{AMT0} shows that  $A_1$ is
the group algebra of $H_1(\pi)=\pi/[\pi, \pi]$,  and the map
$\tr:\kk\check \pi \to A_1$ is   the linear extension of the obvious
projection $\check \pi\to H_1(\pi)$.

For a  commutative algebra $B$,   consider the group $\GL_N(B)$ of
invertible   $N\times N$-matrices   over $B$ and set $\mathcal
H_B=\Hom  (\pi, \GL_N(B))$. Restricting algebra homomorphisms $A\to
\Mat_N(B)$ to $\pi\subset A$, we obtain a  bijection
$$
\Hom_{\Alg} (A, \Mat_N(B)) \simeq \Hom  (\pi, \GL_N(B)) =\mathcal H_B
$$
  and, composing with \eqref{adjunction},   we obtain a   bijection
\begin{equation}\label{secondident}
\mathcal H_B  \simeq     \Hom_{\CAlg} (A_N, B).
\end{equation}
 Thus, there is an   affine scheme (over $\kk$)
whose set of $B$-points  is $\mathcal H_B$ for any commutative
algebra $B$, and  $A_N$ is the coordinate algebra of that scheme.

Note that every   $x\in A_N$  determines a function $  \mathcal
H_B\to B $ which corresponds under \eqref{secondident} to evaluation
at   $x$.   This defines the  {\it evaluation} map
\begin{equation}\label{evaluation}
\ev_B: A_N \longrightarrow \Map(\mathcal H_B,B),
\end{equation}
which is an algebra homomorphism from $A_N$ to the algebra  $\Map(\mathcal H_B,B)$ of $B$-valued functions on $\mathcal
H_B$ with point-wise addition and multiplication. For any $a\in
\pi$, $i,j\in \{1,\dots, N\}$ and $s \in \mathcal H_B$, we have
\begin{equation}\label{evaluation_formula}
\ev_B(a_{ij})(s) = \hbox{$(i,j)$-th term of the matrix $s(a)$}.
\end{equation}
The action of $G_N$ on $\GL_N(B)$ by conjugations induces an action
of $G_N$   on $\mathcal H_B$; the latter induces an action of $G_N$
on $\Map(\mathcal H_B,B)$ so that $(gf)(h)=f(g^{-1}h)$ for any $g
\in G_N$, $f\in \Map(\mathcal H_B,B)$ and $h\in \mathcal H_B$.  The
map $\ev_B$ is $G_N$-equivariant:  $\ev_B(gx)= g\ev_B(x)$ for
any $x\in A_N$ and $g\in G_N$.   Indeed, for  any $s\in \mathcal
H_B$ corresponding through \eqref{secondident} to $r\in \Hom_{\CAlg}
(A_N, B)$, we have
$$
\ev_B(gx)(s) = r(gx) \stackrel{\eqref{unique_action} }{=} (g^{-1}r)(x)
= \ev_B(x)(g^{-1}s)=\big(g \ev_B(x)\big) (s).
$$

 Consider now the case where $\pi$ is a   free group
with basis   $\{x_u\}_{u\in U}$ indexed by a (possibly infinite) set $U$.  
It follows from the definitions that the algebra
$A_N=(\kk \pi)_N$ is generated by the commuting symbols
$x^u_{ij}=(x_u)_{ij}$ and ${\overline x}^u_{ij}=(x_u^{-1})_{ij}$
with $u\in U$, $i,j \in \{1, \ldots, N\}$ subject only to the
relation   $  x^u_{il} \, {\overline x}^u_{ lj}=\delta_{ij}$ for all
$u, i,j  $.  These relations may be expressed by saying that the
$N\times N$-matrices   $x^u= (x^u_{ij})_{i,j}$   and  ${\overline
x}^u= ({\overline x}^u_{ij})_{i,j}$  are mutually inverse for all
$u$. The same algebra is generated by the commuting symbols  $y^u$
and $x^u_{ij}$ with $u\in U$ and $i,j\in\{1,\dots, N\}$, subject
to the relation   $ y^u \det (x^u )  =1$ for all $u $. If $U$ is
finite and $n=\card(U)$, then $A_N$ is the tensor product of $n$
copies of the   algebra   generated by the commuting symbols
 $y$ and $x_{ij}$ with $i,j\in\{1,\dots, N\}$  subject   to the single relation   
 $y\, \det\left((x_{ij} )_{i,j}\right)  =1$. 
This is compatible with the standard computation of the coordinate
algebra of the  affine scheme obtained  as the direct
product of $n$ copies of $\GL_N$ (see, for example, \cite[Example
3.5.6]{Do} or \cite[\S I.2.2]{Ja}).

\subsection{Main theorem}\label{AMT1}

We   state   our  main result.
 The   definition of Goldman's Lie bracket used in this
statement will be recalled in Section~\ref{endofmain}.

 \begin{theor}\label{CQPS}     Let  $\Sigma$ be an oriented surface
 with   base point $\ast\in \partial \Sigma$. Let $\pi=\pi_1(\Sigma,\ast)$
 and $A=\kk\pi$.
 Then, for all $N\geq 1$, we have the following:

 - the $(G_N,\g_N)$-algebra $A_N$
 admits a canonical quasi-Poisson bracket $\bracket{-}{-}$ associated with the trivector~$\phi_N$;

 -  the trace map   $\tr: \kk\check \pi  \to  A_N^{\g_N}$
 is a homomorphism of Lie algebras where $\kk\check
 \pi$ is endowed with $2 \times { {(the \,\, Goldman \,\, Lie\,\,  bracket)}}$  and $A_N^{\g_N}$ is
 endowed with the restriction of the bracket $\bracket{-}{-}$.
\end{theor}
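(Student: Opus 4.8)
The plan is to obtain the bracket on $A_N$ as the image, under Van den Bergh's representation functor \cite{VdB}, of a \emph{double bracket} on the group algebra $A=\kk\pi$ manufactured from the homotopy intersection pairing of \cite{Tu}. Since $\Sigma$ has nonempty boundary, $\pi$ is a free group and $A$ is a cocommutative Hopf algebra; the homotopy intersection pairing then supplies a Fox pairing $\eta\colon A\times A\to A$. First I would convert $\eta$ into a double bracket $\double{-}{-}\colon A\otimes A\to A\otimes A$ by combining $\eta$ with the comultiplication of $A$ (the mechanism ``from Fox pairings to double brackets''). Writing $\double{a}{b}=\double{a}{b}'\otimes\double{a}{b}''$ in Sweedler-type notation, Van den Bergh's theory then produces a bilinear operation on the generators of $A_N$ by the rule
\begin{equation*}
\bracket{a_{ij}}{b_{kl}}=\double{a}{b}'_{kj}\,\double{a}{b}''_{il},
\end{equation*}
extended to all of $A_N$ by the Leibniz rule \eqref{ssd}. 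One must check that this is compatible with the defining relations of $A_N$ and is a derivation in each variable; these are formal consequences of $\double{-}{-}$ being a double bracket.

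The crux is to verify that $\double{-}{-}$ is a \emph{quasi-Poisson} double bracket, i.e.\ that its associated triple bracket $\triple{-}{-}{-}$ equals the canonical defect term prescribed by Van den Bergh's formalism for the trace pairing on $\g_N$. Concretely I would compute the cyclic sum of iterated double brackets and show that, after applying the representation functor, the resulting anomaly is exactly $\phi_N(a,b,c)$ as defined in Section~\ref{QPA5}. This is the heart of the matter and the step I expect to be the main obstacle: the defect in the double Jacobi identity measures the self-interaction of $\eta$, and matching it with $\phi_N$ forces one to exploit the specific algebraic identities satisfied by the homotopy intersection pairing (reflecting how arcs based at $\ast\in\partial\Sigma$ intersect and cancel near the base point). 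Granting this, Van den Bergh's theory guarantees that the induced bracket on $A_N$ is skew-symmetric, is a derivation in each variable, and satisfies the modified Jacobi identity \eqref{ssdM} with trivector $\phi_N$; the compatibilities \eqref{ssdM--} and \eqref{ssdM++} with the $\g_N$- and $G_N$-actions follow from the naturality of the whole construction under the conjugation action. This establishes the first assertion.

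For the second assertion I would invoke the interaction of a double bracket with the quotient $A/[A,A]$. A double bracket induces a bilinear operation $\langle a,b\rangle=\double{a}{b}'\,\double{a}{b}''$ on $A/[A,A]$, and the trace intertwines it with the bracket on $A_N$ in the precise sense that $\bracket{\tr a}{\tr b}=\tr(\langle a,b\rangle)$ for all $a,b\in A$ (a direct index computation using $\tr(uv)=\tr(vu)$ in the commutative algebra $A_N$). Since $\tr(A/[A,A])\subset A_N^{\g_N}$, where by Section~\ref{QPA2} the defect $\phi_N(a,b,c)$ vanishes and $\bracket{-}{-}$ restricts to a genuine Poisson bracket, it follows that $\langle-,-\rangle$ is a Lie bracket on $A/[A,A]=\kk\check\pi$ and that $\tr$ is a Lie homomorphism into $A_N^{\g_N}$. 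It then remains only to identify $\langle-,-\rangle$ with twice the Goldman bracket. This I would do by a direct expansion: writing out $\langle a,b\rangle$ for conjugacy classes $a,b\in\check\pi$ in terms of $\eta$ and comparing with the definition in \cite{Tu}, one recovers Goldman's signed count of intersection points \cite{Go1,Go2}, the factor $2$ arising from the symmetrization implicit in forming $\langle-,-\rangle$ out of the non-symmetric pairing $\eta$ and its transpose. Combining this identification with the intertwining property completes the proof of the second assertion.
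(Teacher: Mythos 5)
Your proposal follows essentially the same route as the paper: the homotopy intersection Fox pairing is converted into a double bracket on $A=\kk\pi$ via the Hopf algebra structure, the quasi-Poisson property of that double bracket is the key computation, Van den Bergh's representation functor then yields the bracket on $A_N$ with the anomaly $\phi_N$, and the second claim follows from the trace identity $\bracket{\tr a}{\tr b}=\tr\langle a,b\rangle$ together with the identification of $\langle-,-\rangle$ with twice the Goldman bracket. The one point to make explicit is that the double bracket must be built from the \emph{skew-symmetrized} pairing $\eta^s=\eta-\overline{\eta}=2\eta+\rho_1$ rather than from $\eta$ itself (otherwise the skew-symmetry axiom of a double bracket fails), which is precisely the symmetrization you invoke to explain the factor of $2$.
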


The quasi-Poisson bracket in $A_N$ produced by this theorem is
defined by an explicit formula and does not depend on any
provisional choices (see Section~\ref{PMT}). This bracket is natural
in the sense that it is invariant under homeomorphisms of surfaces
preserving orientation and the base point.

The restriction of our quasi-Poisson bracket in $A_N$ to
$A_N^{\g_N}$ is a Poisson bracket turning $A_N^{\g_N}$ in a Poisson
algebra. The second claim of Theorem~\ref{CQPS} implies that the
subalgebra $A^t_N$ of $A_N^{\g_N}$ generated   by $\tr(\check \pi)
$ is a Poisson subalgebra. We may summarize the situation by saying
that Goldman's bracket multiplied by 2 induces a    Poisson bracket
in $A_N^{t}$, and the latter extends canonically to a natural
quasi-Poisson bracket in $A_N$. If $2$ is invertible in $\kk$, then
dividing the latter bracket by $2$ we obtain   a quasi-Poisson
bracket in $A_N$ which is associated with $\phi_N/4$ and extends the
Poisson bracket in $A_N^{t}$ induced by Goldman's bracket. If $ \kk
$ is a field of characteristic zero and $\Sigma$ is compact,
then $A^t_N=A^{G_N}_N$ and we obtain a Poisson bracket in $A^{G_N}_N$.

For $N=1$ our quasi-Poisson bracket  in $A_1$, i.e.\ in the group
algebra of $H_1(\pi)$,   may be computed explicitly by
$\bracket{a}{b}= 2 (a \mediumdot b)\, ab$ for all  $a,b\in
H_1(\pi)=H_1(\Sigma)$, where $\mediumdot$ denotes  the homological
intersection form  of $\Sigma$ and we use multiplicative notation
for the group operation in $H_1(\Sigma)$. This bracket is in fact a
Poisson bracket, which is compatible with the equality  $
A_1^{\g_1}= A_1$.

  The proof of Theorem~\ref{CQPS}   starts with a bilinear form
$\eta$ in $A=\kk\pi$ defined in \cite{Tu}. This form is a Fox
pairing in the sense of \cite{MT}. A simple normalization turns
$\eta$ into a skew-symmetric Fox pairing $\eta^s$ in $A$. We show
that every skew-symmetric Fox pairing in $A$ induces a double
bracket in $A$ in the sense of Van den Bergh \cite{VdB}. The double
bracket induced by $\eta^s$ turns out to be quasi-Poisson. Then a
construction of Van den Bergh yields a quasi-Poisson bracket in
$A_N$.  We introduce all these tools in
Sections~\ref{VdB}--\ref{FFPTDB}  and   finish the proof in
Section~\ref{PMT}.

\section{Double and triple brackets} \label{VdB}

We  outline the theory of ``multiple brackets''   due to Van den
Bergh~\cite{VdB}. Throughout this section, $A$ is an  arbitrary
algebra.

\subsection{Preliminaries}\label{pre}

 For $n\geq 2$, we    will often write any element $x$ of $A^{\otimes n}$ as
$x^{(1)}\otimes  \cdots \otimes x^{(n)}$ and drop the summation sign.
  Given a permutation $(i_1,\dots,i_n)$ of $(1,\dots,n)$, 
we denote by $P_{i_1\cdots i_n}$ the  linear  map $A^{\otimes n}\to A^{\otimes n}$  
carrying any $x\in A^{\otimes n}$ to $ x^{(i_1)} \otimes x^{(i_2)} \otimes \cdots \otimes x^{(i_n)}$.  
Unless explicitly stated otherwise,   we endow   $A^{\otimes n}$ with the ``outer''
$A$-bimodule structure defined by
\begin{equation}\label{outer}
axb= ax^{(1)}\otimes  x^{(2)}\otimes \cdots \otimes  x^{(n-1)} \otimes x^{(n)} b \quad
\hbox{for   $a,b \in A$ and $x\in A^{\otimes n}$.}
\end{equation}
A linear map $D:A\to A^{\otimes n}$ is a \emph{derivation} if
$D(ab)=a D(b)+ D(a) b$ for all $a,b\in A$.

\subsection{Double   brackets}\label{db}
A \emph{double bracket} on the algebra $A$ is a  linear map $\double{-}{-}:
A^{\otimes 2} \to A^{\otimes 2}$ which is a derivation in the second
variable and  is skew-symmetric. This means that for all $a,b,c\in A$
$$
\double{a}{bc} = b \double{a}{c} + \double{a}{b}c
\quad \hbox{and} \quad \double{b}{a}=-P_{21}\double{a}{b}
$$
These properties imply that $\double{ab}{c}=
a*\double{b}{c}+\double{a}{c}*b$ for all $a,b,c\in A$  where $\ast$
is the ``inner'' $A$-bimodule structure on $A^{\otimes 2}$   defined
by
\begin{equation}\label{inner}
l*(a_1 \otimes a_2)*r= a_1 r \otimes l a_2 \quad
\hbox{for   $l,a_1,a_2,r \in A$.}
\end{equation}

We now relate the double brackets to  the algebra $A_N$ defined in Section~\ref{AMT0}.

\begin{lemma}\label{fdbtopb}  \cite[Proposition 7.5.1]{VdB}
Given a double   bracket $\double{-}{-}$ in   $A$ and an integer
$N\geq 1$, there is a unique   bilinear form $\{ -, -\}:A_N\times
A_N \to A_N$ which is a derivation in each variable and satisfies
\begin{equation}\label{db_to_qpb}
 \bracket{a_{ij}}{b_{uv}}=  \double{a}{b }_{uj}^{(1)}  \double{a }{b }_{iv}^{(2)}
\end{equation}
for all  $a,b\in A$ and $i,j,u,v\in \{1,\ldots, N\}$. This form $\{
-, -\}$ is skew-symmetric and satisfies the identities
 \eqref{ssdM--}, \eqref{ssdM++}.
\end{lemma}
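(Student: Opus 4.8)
The plan is to prescribe $\bracket{-}{-}$ on the generators $a_{ij}$ of $A_N$ by the formula \eqref{db_to_qpb} and to extend it by the Leibniz rule; since a biderivation of a commutative algebra is determined by its values on a generating set, this pins $\bracket{-}{-}$ down uniquely and settles the uniqueness clause at once. To make the existence rigorous I would view $A_N$ as the quotient of the symmetric algebra $\operatorname{Sym}(W)$, where $W$ is the $\kk$-module of symbols $a_{ij}$ ($\kk$-linear in $a$), by the ideal $J$ generated by the unit relations $1_{ij}-\delta_{ij}$ and the multiplicativity relations $(ab)_{ij}-a_{il}b_{lj}$. As the double bracket is linear in each argument and $c\mapsto c_{ij}$ is linear, the right-hand side of \eqref{db_to_qpb} is a well-defined bilinear map $W\times W\to A_N$, which extends uniquely to a biderivation of $\operatorname{Sym}(W)$ valued in $A_N$.

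The heart of the existence statement is to show that this biderivation descends to $A_N$, i.e.\ that it annihilates $J$ in each variable; by the Leibniz rule it is enough to check that $\bracket{r}{g}=0$ and $\bracket{g}{r}=0$ in $A_N$ for each relation $r$ and each generator $g$. For the multiplicativity relation I would expand $\bracket{(ab)_{ij}}{c_{uv}}$ using the first-variable derivation rule $\double{ab}{c}=a*\double{b}{c}+\double{a}{c}*b$, the inner bimodule structure \eqref{inner}, and the entry identity $(xy)_{iv}=x_{il}y_{lv}$, and compare it with $\bracket{a_{il}b_{lj}}{c_{uv}}$ obtained from the Leibniz rule; using commutativity of $A_N$ the two expansions coincide term by term. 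The unit relation reduces to $\double{1}{-}=0$, and the second-variable checks are entirely analogous. This produces the desired biderivation $\bracket{-}{-}$ on $A_N$.

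The remaining properties all follow by reducing to generators. For skew-symmetry, the map $(x,y)\mapsto\bracket{x}{y}+\bracket{y}{x}$ is again a biderivation, hence vanishes as soon as it vanishes on pairs of generators, which is immediate from $\double{b}{a}=-P_{21}\double{a}{b}$ together with commutativity. For \eqref{ssdM--}, for $w\in\g_N$ I would set $L_w(x,y)=w\bracket{x}{y}-\bracket{wx}{y}-\bracket{x}{wy}$ and verify by a short Leibniz computation that the terms in which $w$ differentiates a product cancel, so that $L_w$ is again a biderivation; it then suffices to check $L_w(a_{ij},b_{uv})=0$, which follows from the action formula \eqref{gN_on_AN} with all contributions cancelling in pairs. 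Likewise $(x,y)\mapsto g\bracket{x}{y}-\bracket{gx}{gy}$ is a (twisted) biderivation, so \eqref{ssdM++} reduces to a generator computation based on \eqref{GN_on_AN}.

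I expect the main obstacle to be the generator-level computations, above all the multiplicativity check, since that is the one place where the precise index arrangement of \eqref{db_to_qpb}---pairing the bracket indices $i,j$ against the evaluation indices $u,v$ in the ``crossed'' way $\double{a}{b}_{uj}^{(1)}\double{a}{b}_{iv}^{(2)}$---is forced, and keeping the summed indices straight is delicate. A convenient way to organize everything, and to make the $G_N$-covariance transparent, is to assemble the generators into the universal matrix $X_a=(a_{ij})_{i,j}\in\Mat_N(A_N)$, under which \eqref{GN_on_AN} becomes $X_a\mapsto g^{-1}X_a g$; with this notation the matrix-product shape of \eqref{db_to_qpb} is manifestly compatible with conjugation, giving \eqref{ssdM++}, and \eqref{ssdM--} can then be recovered by linearizing \eqref{ssdM++} over the dual numbers $\kk[t]/(t^2)$.
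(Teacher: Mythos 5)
Your proposal is correct and follows essentially the same route as the paper: define the bracket on the generators $a_{ij}$ by \eqref{db_to_qpb}, extend by the Leibniz rule, verify compatibility with the defining relations of $A_N$ (linearity, the unit relation via $\double{1}{-}=0=\double{-}{1}$, and the multiplicativity relation via the derivation properties of $\double{-}{-}$ in each variable), and then check skew-symmetry, \eqref{ssdM--} and \eqref{ssdM++} on generators after observing that each of these identities is a (bi)derivation-type condition determined by its values on generators. Your explicit framing via $\operatorname{Sym}(W)/J$ and the universal-matrix remark are organizational refinements of the same argument, not a different proof.
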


\begin{proof}  We extend \eqref{db_to_qpb} to a bilinear form
$\{ -, -\}:A_N\times A_N \to A_N$ which is a derivation in each
variable. To see that this form is well-defined, we need   to verify
the compatibility with the defining relations of $A_N$. That the
right-hand side of \eqref{db_to_qpb} is  linear  in both $a$ and $b$
follows from the bilinearity of $\double{-}{- }$.
The compatibility with the relation $1_{ij} =\delta_{ij}$ is a consequence
of the fact that $\double{1}{b}= 0 =\double{a}{1}$ for any $a,b\in A$.
To verify the
compatibility with the relation of type $(bc)_{uv}=  b_{ul} c_{lv}$, let us
expand $ \double{ a}{b}=x^{(1)}\otimes x^{(2)}$ and $ \double{
a}{c}=y^{(1)}\otimes y^{(2)}$. Then $$\double {a} {bc}=b\double {a}
{c} + \double {a} {b} c = b y^{(1)}\otimes y^{(2)} + x^{(1)}\otimes
x^{(2)}c.$$ Therefore
\begin{eqnarray*}
 \bracket{a_{ij}}{(bc)_{uv}} &= &{\double{a} {bc}}^{(1)}_{uj} {\double{a} {bc}}^{(2)}_{iv}\\
 &=&(b y^{(1)})_{uj} y^{(2)}_{iv}+ x^{(1)}_{uj} (x^{(2)}c)_{iv} \\
&=& b_{ul} y^{(1)}_{lj} y^{(2)}_{iv}+ x^{(1)}_{uj} x^{(2)}_{il} c_{lv}\\
&=& b_{ul} \bracket{a_{ij}}{ c_{lv}}+\bracket{a_{ij}}{b_{ul}} c_{lv}
\ = \ \bracket{a_{ij}}{b_{ul}c_{lv}}.
\end{eqnarray*}
A similar computation gives
$\bracket{(ab)_{ij}}{c_{uv}}= \bracket{a_{il} b_{lj}}{ c_{uv}}$.

The skew-symmetry of $\bracket{-}{-}$ follows from the skew-symmetry
of $\double{-}{-}$:
$$\bracket{b_{uv}}{a_{ij}}= \double{b}{a}^{(1)}_{iv}
\double{b}{a}^{(2)}_{uj}=- \double{a}{b}^{(2)}_{iv}
\double{a}{b}^{(1)}_{uj}=- \bracket {a_{ij}} {b_{uv}}.$$

Pick $w=(w_{k,l})_{k,l}\in \g_N $. It is easy to see that if
 \eqref{ssdM--}  holds for       the generators of $A_N$, then it holds for all elements of $A_N$.
We check \eqref{ssdM--} for   the generators:
\begin{eqnarray*}
 w \bracket{   a_{ij}}{  b_{uv} }
&=& w (\double{a}{b }_{uj}^{(1)}  \double{a }{b }_{iv}^{(2)})\\
&=& w (\double{a}{b }_{uj}^{(1)} ) \double{a }{b }_{iv}^{(2)}
+ \double{a}{b }_{uj}^{(1)} w( \double{a }{b }_{iv}^{(2)}) \\
&=& \double{a}{b }_{uk}^{(1)}  w_{k,j}  \double{a }{b }_{iv}^{(2)}
- w_{u,k}  \double{a}{b }_{kj}^{(1)}  \double{a }{b }_{iv}^{(2)} \\
&& +  \double{a}{b }_{uj}^{(1)}  \double{a }{b }_{ik}^{(2)}  w_{k,v}
-  \double{a}{b }_{uj}^{(1)}  w_{i,k}  \double{a }{b }_{kv}^{(2)} \\
&=&  \bracket{  a_{ik} w_{k,j} - w_{i,k} a_{kj}}{  b_{uv} } +
\bracket{   a_{ij}}{ b_{uk} w_{k,v} -w_{u,k} b_{kv} }\\
&=&  \bracket{  w a_{ij}}{  b_{uv} } +  \bracket{   a_{ij}}{ w b_{uv} }.
 \end{eqnarray*}

Similarly, it is enough to check \eqref{ssdM++}  for any
$g=(g_{k,l})\in G_N$ and  the generators of $A_N$. We have
\begin{eqnarray*}
\bracket{ g a_{ij}}{g b_{uv} }
&=& \bracket{(g^{-1})_{i,k}  a_{kl} g_{l,j}}{(g^{-1})_{u,s} b_{st}  g_{t,v}}\\
&= & (g^{-1})_{i,k}  g_{l,j} (g^{-1})_{u,s} g_{t,v} \bracket{ a_{kl} }{ b_{st} }\\
&=& (g^{-1})_{i,k}  g_{l,j} (g^{-1})_{u,s} g_{t,v}  \double{a}{b}^{(1)}_{sl} \double{a}{b}^{(2)}_{kt}\\
&=& (g^{-1})_{u,s}  \double{a}{b}^{(1)}_{sl} g_{l,j}  \,   (g^{-1})_{i,k} \double{a}{b}^{(2)}_{kt}  g_{t,v} \\
&=& (g \double{a}{b}^{(1)}_{uj}) (g \double{a}{b}^{(2)}_{iv})\\
& = & g (\double{a}{b}^{(1)}_{uj}\double{a}{b}^{(2)}_{iv}) \ = \ g \bracket{   a_{ij}}{  b_{uv} }.
\end{eqnarray*}

\vspace{-0.5cm}
\end{proof}

\subsection{A bracket $\langle- ,- \rangle$}\label{stdb} A  double bracket $\double{- }{-}$ on $A$
induces further pairings  which we now discuss.  For $a, b\in A$,
set
\begin{equation}\label{otherbracket}\langle a,b \rangle= {\double{a }{b}}^{(1)}  {\double{a }{b}}^{(2)}\in A.\end{equation} We
state several properties of  $\langle -, - \rangle:A\times A \to A$
following \cite[Section 2.4]{VdB}:

(i)  the composition of $\langle -, - \rangle$ with the projection
to  $ A/[A,A]$ is skew-symmetric;

(ii)  $\langle [A,A], A \rangle=0$;

(iii)  the pairing $\langle -, - \rangle $ is a derivation in the
second variable.

Set $\check{A}=A/[A,A]$. The properties (i) and (ii) imply that
$\langle -, - \rangle$ induces    bilinear pairings $\check
A\times A\to A$    and   $\check A\times \check A\to \check
A$. The latter pairing is  skew-symmetric. Both pairings are denoted
by $\langle - , - \rangle$.

\begin{lemma}\label{comparis} For all $N\geq 1$, the trace map $\tr:\check A\to A_N$
carries the bracket $\langle- , - \rangle$ on $\check A$ to the
bracket $\bracket{-}{-}$ on $A_N$ defined by Lemma \ref{fdbtopb}.
\end{lemma}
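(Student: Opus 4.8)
The plan is to verify the claimed identity on the generating elements of $\check A$ and then propagate it through the structure maps. Recall that $\tr:\check A\to A_N$ sends the class of $a\in A$ to $\tr(a)=\sum_{i=1}^N a_{ii}$, so what must be shown is
\begin{equation*}
\bracket{\tr(a)}{\tr(b)} = \tr\bigl(\langle a,b\rangle\bigr)
\end{equation*}
for all $a,b\in A$, where on the left $\bracket{-}{-}$ is the bracket on $A_N$ from Lemma~\ref{fdbtopb} and $\langle a,b\rangle = \double{a}{b}^{(1)}\double{a}{b}^{(2)}\in A$ from \eqref{otherbracket}. Since $\bracket{-}{-}$ is a derivation in each variable and every element of $A_N$ is a polynomial in the generators $a_{ij}$, it suffices to establish the identity for $a,b$ ranging over $A$ once we know the behaviour on the traces $\tr(a)=a_{ii}$ (summed). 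The bracket $\langle-,-\rangle$ on $\check A$ is likewise determined by its values on classes of elements of $A$, so checking the formula on such representatives is enough.

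First I would compute the left-hand side directly from the defining formula \eqref{db_to_qpb}. Writing $\tr(a)=\sum_i a_{ii}$ and $\tr(b)=\sum_u b_{uu}$ and using bilinearity,
\begin{equation*}
\bracket{\tr(a)}{\tr(b)} = \sum_{i,u} \bracket{a_{ii}}{b_{uu}}
= \sum_{i,u} \double{a}{b}^{(1)}_{ui}\,\double{a}{b}^{(2)}_{iu}.
\end{equation*}
Setting $\double{a}{b}=x^{(1)}\otimes x^{(2)}$, the right-hand side becomes $\sum_{i,u} x^{(1)}_{ui}\,x^{(2)}_{iu}$. The key observation is that summing over the matched indices $i$ and $u$ exactly reproduces matrix multiplication followed by a trace: $\sum_{i,u} x^{(1)}_{ui}x^{(2)}_{iu} = \sum_u (x^{(1)}x^{(2)})_{uu} = \tr\bigl(x^{(1)}x^{(2)}\bigr)$, using the multiplication relation $(ab)_{ij}=a_{il}b_{lj}$ in $A_N$ with $l$ summed. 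Thus the sum telescopes into $\tr\bigl(\double{a}{b}^{(1)}\double{a}{b}^{(2)}\bigr) = \tr\bigl(\langle a,b\rangle\bigr)$, which is precisely the right-hand side.

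The computation above is essentially the whole argument; the only point requiring care is bookkeeping of the two index pairs. The main obstacle, such as it is, is making sure the roles of the upper and lower indices in \eqref{db_to_qpb} are tracked correctly: the formula places the $(1)$-component at indices $(u,j)$ and the $(2)$-component at $(i,v)$, so when both $\tr(a)$ and $\tr(b)$ impose $i=j$ and $u=v$ the surviving index contractions must be matched to give $x^{(1)}_{ui}x^{(2)}_{iu}$ rather than some other pairing. Once this is verified, I would remark that the identity is well-defined on $\check A$ because properties (i) and (ii) of Section~\ref{stdb} guarantee $\langle-,-\rangle$ descends to $\check A$, and because $\tr([A,A])=0$ ensures the left-hand side also factors through $\check A$; these compatibilities were already noted in Section~\ref{AMT0} and Section~\ref{stdb}, so no further checking is needed.
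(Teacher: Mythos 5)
Your proposal is correct and follows essentially the same computation as the paper: expand $\bracket{\tr(a)}{\tr(b)}$ via bilinearity and formula \eqref{db_to_qpb}, then recognize the double index sum $\sum_{i,u}\double{a}{b}^{(1)}_{ui}\double{a}{b}^{(2)}_{iu}$ as $\tr\bigl(\double{a}{b}^{(1)}\double{a}{b}^{(2)}\bigr)=\tr\bigl(\langle a,b\rangle\bigr)$ using the multiplication relation in $A_N$. The index bookkeeping you flag is handled correctly, and the closing remarks on well-definedness match what the paper takes for granted.
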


\begin{proof} Let $a,b\in A$ and let $\check a, \check b$ be their projections to $\check A$.  We have
\begin{eqnarray*}
\bracket{\tr(\check a)}{\tr(\check b)} &=& \sum_{i,j} \bracket{a_{ii}}{b_{jj}}\\
& \stackrel{\eqref{db_to_qpb}}{=}&  \sum_{i,j} \double{a}{b}^{(1)}_{ji} \double{a}{b}^{(2)}_{ij}\\
&=&  \sum_{j} \left(\double{a}{b}^{(1)} \double{a}{b}^{(2)}\right)_{jj}\\
&=&  \tr \left(\double{a}{b}^{(1)} \double{a}{b}^{(2)}\right)
\ \stackrel{\eqref{otherbracket}}{=} \ \tr\big(\langle   a,  b \rangle\big) \ = \ \tr\big(\langle \check a, \check b \rangle\big).
\end{eqnarray*}

\vspace{-0.5cm}
\end{proof}

\subsection{Triple brackets}\label{trb}

The brackets  $\bracket{-}{-}$ on $A_N$ and    $\langle - , - \rangle$
on $\check A=A/[A,A]$ are skew-symmetric but do not
necessarily satisfy the Jacobi identity. Their deviation from the
Jacobi identity is formulated in terms of triple brackets on $A$. A
\emph{triple bracket} on $A$ is a  linear map $\triple{-}{-}{-}:
A^{\otimes 3} \to A^{\otimes 3}$ which is a derivation in the third
variable and is cyclically symmetric. This means that for   all
$a,b,c,d\in A$,  $$\triple{a}{b}{cd} = c \triple{a}{b}{d} +
\triple{a}{b}{c}d  \quad {\rm {and}} \quad \triple{c}{a}{b}=
 P_{312} \triple{a}{b}{c}.$$
By \cite[Proposition 2.3.1]{VdB}, a  double bracket $\double{-}{-}$
on $ A$ determines a triple bracket on $A$ by
\begin{equation}\label{triple}
 \triple{-}{-}{-} =
\sum_{i=0}^2  P_{312}^i ( \double{-}{-} \otimes \id_A) (\id_A \otimes \double{-}{-}) P_{312}^{-i}.
\end{equation}

\begin{lemma}\label{qP--}  \cite[Proposition 7.5.2 \& Corollary 2.4.4]{VdB} Consider a
double bracket $\double {-} {-}$ on $A$, the associated triple
  bracket $\triple {-} {-} {-}$ on $A$,   the induced brackets $\bracket {-} {-}$ on $A_N$ for $N\geq 1$
  and $\langle - , - \rangle$ on $A$. For any $a,b,c\in A$
   and any $p,q,r,s,u,v\in \{1,\dots, N\}$, we have
\begin{eqnarray}
\label{Jacobi_tb}
&&\bracket {a_{pq}} {\bracket {b_{rs}} {c_{uv}}}+ \bracket {b_{rs}}  {\bracket  {c_{uv}} {a_{pq}}}+
\bracket  {c_{uv}}  {\bracket  {a_{pq}}  {b_{rs}}}\\
\nonumber &=&{\triple {a}  {b}{c}}_{uq}^{(1)}    {\triple {a}  {b}{c}}_{ps}^{(2)}
  {\triple {a}  {b}{c}}_{rv}^{(3)} - {\triple {a}  {c}{b}}_{rq}^{(1)} {\triple {a}  {c}{b}}_{pv}^{(2)} {\triple {a}  {c}{b}}_{us}^{(3)}
\end{eqnarray}
and
\begin{equation}\label{derivd}
\langle \langle a, b\rangle, c\rangle
- \langle a , \langle b , c \rangle \rangle + \langle b , \langle a , c \rangle \rangle =  m_3(\triple {b} {a} {c} - \triple {a} {b} {c})
\end{equation}
where $m_3:A^{\otimes 3}\to A$     carries any $x\in A^{\otimes 3}$ to $x^{(1)} x^{(2)}   x^{(3)}\in A$.
\end{lemma}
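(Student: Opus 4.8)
The plan is to establish both identities by direct computation from the definitions, handling \eqref{Jacobi_tb} and \eqref{derivd} in parallel but independently, with the formula \eqref{triple} for the triple bracket serving as the organizing principle in each case.

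For \eqref{Jacobi_tb} I would work from the inside out. By \eqref{db_to_qpb} the innermost bracket $\bracket{b_{rs}}{c_{uv}}=\double{b}{c}^{(1)}_{us}\double{b}{c}^{(2)}_{rv}$ is a product of two generators of $A_N$. Since $\bracket{a_{pq}}{-}$ is a derivation, applying it splits this product by the Leibniz rule, and a second use of \eqref{db_to_qpb} rewrites each resulting factor in terms of the iterated double brackets $\double{a}{\double{b}{c}^{(1)}}$ and $\double{a}{\double{b}{c}^{(2)}}$, each carrying a definite index pattern inherited from the ``crossed'' placement in \eqref{db_to_qpb}. I would tabulate all such terms for the three cyclic summands. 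The decisive step is then to observe that, after forming the cyclic sum, these iterated double brackets reassemble exactly into the two summands $\triple{a}{b}{c}$ and $\triple{a}{c}{b}$ on the right-hand side: this is forced by the definition \eqref{triple} of the triple bracket as $\sum_{i=0}^{2}P_{312}^{i}(\double{-}{-}\otimes\id_A)(\id_A\otimes\double{-}{-})P_{312}^{-i}$, whose three terms correspond precisely to the three cyclic permutations being summed.

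For \eqref{derivd} I would expand each iterated pairing using $\langle x,y\rangle=\double{x}{y}^{(1)}\double{x}{y}^{(2)}$. The two terms $\langle a,\langle b,c\rangle\rangle$ and $\langle b,\langle a,c\rangle\rangle$ are handled with property (iii), namely that $\langle a,-\rangle$ is a derivation in the second variable, which splits them over the two tensor legs of $\double{b}{c}$ and $\double{a}{c}$ respectively. The term $\langle\langle a,b\rangle,c\rangle$ is different: here I must first compute $\double{\double{a}{b}^{(1)}\double{a}{b}^{(2)}}{c}$ using the first-variable Leibniz rule $\double{xy}{c}=x*\double{y}{c}+\double{x}{c}*y$ for the inner bimodule structure \eqref{inner}, and only then multiply out. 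Collecting the resulting terms, the prescribed alternating combination on the left of \eqref{derivd} should collapse, once more via \eqref{triple}, to the multiplied-out triple brackets $m_3(\triple{b}{a}{c}-\triple{a}{b}{c})$.

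The main obstacle in both cases is the index and tensor-factor bookkeeping. In \eqref{Jacobi_tb} one must track exactly which tensor leg of each iterated double bracket contributes which subscript, so that the surviving free indices match the patterns $(u,q),(p,s),(r,v)$ for $\triple{a}{b}{c}$ and $(r,q),(p,v),(u,s)$ for $\triple{a}{c}{b}$, including the relative minus sign; this is precisely what pins down the two distinct orderings of the triple bracket. In \eqref{derivd} the subtlety is the interplay between the outer bimodule structure used for the derivation property and the inner structure \eqref{inner} entering the first-variable Leibniz rule. In each computation the genuinely delicate point is to verify that the ``cross terms'' arising from differentiating one factor of a product versus the other either cancel or recombine into the cyclically symmetrized triple bracket, rather than leaving a residual term.
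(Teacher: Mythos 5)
The paper does not prove this lemma at all: it is stated with a citation to Van den Bergh (\cite[Proposition 7.5.2 \& Corollary 2.4.4]{VdB}) and the verification is outsourced entirely to that reference. Your proposal, by contrast, is a self-contained direct verification, and its ingredients are the right ones: formula \eqref{db_to_qpb}, the Leibniz rule for $\bracket{-}{-}$, the definition \eqref{triple} of the triple bracket, the second-variable derivation property of $\langle -,-\rangle$, and the first-variable Leibniz rule $\double{xy}{c}=x*\double{y}{c}+\double{x}{c}*y$ for the inner bimodule structure. Executed carefully, this reproduces Van den Bergh's argument, so the "difference" from the paper is only that you supply a proof where the paper supplies a pointer; what your route buys is self-containedness (and a check that the statement survives the paper's generalization from characteristic-zero fields to arbitrary $\kk$, which is indeed automatic since everything is a polynomial identity in the structure constants). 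One step of your sketch is compressed in a way worth making explicit: for \eqref{Jacobi_tb}, each of the three cyclic summands on the left contributes \emph{two} terms via Leibniz, and only one of the two (the one where $\bracket{a_{pq}}{-}$ hits the first tensor leg) is directly a summand of $\triple{a}{b}{c}$ of the form $P_{312}^i(\double{-}{-}\otimes\id)(\id\otimes\double{-}{-})P_{312}^{-i}$; the other term, where the derivation hits the second leg, must first be rewritten using the skew-symmetry $\double{x}{y}=-P_{21}\double{y}{x}$ before it becomes a summand of $\triple{a}{c}{b}$, and this conversion is precisely where the minus sign on the right-hand side originates. You allude to the sign and the two index patterns, but naming skew-symmetry as the mechanism (exactly as the paper itself does in the analogous manipulation at the start of the proof of Lemma \ref{homotopy triple bracket}, where $P_{312}^{-1}\double{c}{\double{a}{b}^s}^s_L$ is converted into $-P_{132}\double{\double{a}{b}^s}{c}^s_L$) is the one missing idea needed to make the bookkeeping close up; with it, the count $3\times 2=3+3$ matches term by term and the computation goes through.
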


A double bracket on $A$ is \emph{strong} if the associated triple
bracket on $A$ satisfies
$$
m_3(\triple {a} {b} {c})= m_3( \triple {b} {a} {c})
$$
for all $a,b,c \in A$. Formula \eqref{derivd} and properties (i),
(ii),  (iii) of Section~\ref{stdb} imply that if a double bracket on
$A$ is strong, then the associated bracket $\langle - , - \rangle$
on $\check A$ is a Lie bracket and the pairing $\langle - , -
\rangle:\check A\times A\to   A$ turns $A$ into a $\check
A$-algebra.   The Lie bracket $\langle - , - \rangle$ in $\check{A}$
determined by a strong double bracket in $A$ has the following
property: for each $\check a\in \check A$, the map $\langle
\check{a}, - \rangle : \check A \to \check A$ is  induced by a
derivation of   $A$. Such a Lie bracket  in $\check{A}$ is called by
Crawley-Boevey \cite{Cb} an \emph{$H_0$-Poisson structure} on $A$.
By \cite[Theorem 4.5]{Cb}, for any $H_0$-Poisson structure
$\langle-,-\rangle: \check A \times \check A \to \check A$ on $A$
there is a unique Poisson bracket $\{-,-\}$ in   the subalgebra
$A_N^t$   of $A_N$ generated  by $\tr(A)$ such that
$\bracket{\tr(\check a)}{\tr(\check b)}=\tr\left(\langle \check a ,
\check b\rangle\right)$ for all $a,b\in A$. When the $H_0$-Poisson
structure  is induced by a strong double bracket on $A$, this claim
follows from Lemma~\ref{comparis}.

A double bracket on $A$ is  \emph{Poisson} if the associated triple bracket is   zero.
A Poisson double bracket is strong and moreover,  according to \eqref{Jacobi_tb},
the  associated   bracket  $\bracket {-} {-}$ on $A_N$ is a Poisson bracket.
We will be interested in a
somewhat different class of strong double brackets discussed in the next subsection.

\subsection{Quasi-Poisson double brackets}\label{qpdb}
Each derivation $D:A\to A\otimes A$   determines a triple
bracket on $A$ by
$$ {\triple{a}{b}{c}_D} = D(c)^{(1)} D(a)^{(2)}\otimes D(a)^{(1)}
D(b)^{(2)} \otimes D(b)^{(1)} D(c)^{(2)}
$$
for any $a,b,c\in A$. All requirements on a triple bracket are straightforward.
(See \cite[Proposition 4.1.1]{VdB} for a more general construction.)
The triple bracket determined in this way  by  the
derivation $E:A\to A\otimes A$ carrying  any $a\in A$ to $a\otimes 1
-1\otimes a$ can be explicitly computed as follows:
\begin{eqnarray}\label{compE}
{\triple{a}{b}{c}_E} &=&
a \otimes 1 \otimes bc + 1 \otimes ab \otimes c  +  ca\otimes b \otimes 1 +  c \otimes a \otimes b\\
\nonumber && -  1  \otimes a \otimes bc - a\otimes b \otimes c -  ca \otimes 1 \otimes b -   c \otimes ab \otimes 1.
\end{eqnarray}

 A double bracket $\double{-}{-}$ on $A$ is {\it quasi-Poisson}  if the
associated triple bracket  $\triple{-}{-}{-}$ satisfies
\begin{equation}\label{quasi-Poisson-}
 \triple{a}{b}{c}=  \triple{a}{b}{c}_E
\end{equation}
for all $a,b,c \in A$. It follows from \eqref{compE} that a
quasi-Poisson double bracket is strong. Hence, it induces  a Lie
bracket $\langle - , - \rangle$ on $\check A$  as well as a $\check A$-algebra structure on $A$.

\begin{lemma}\label{qP}  \cite[Theorem 7.12.2 \& Remark 7.12.3]{VdB}
For any quasi-Poisson double   bracket $\double{-}{-}$ on   $A$ and
any $N\geq 1$, the  algebra $A_N$ over $(G_N,\g_N)$ with the
 bracket $\{ -, -\}$   provided by Lemma~\ref{fdbtopb} is a
quasi-Poisson algebra.
\end{lemma}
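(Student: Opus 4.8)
The plan is to verify, one by one, the defining conditions of a quasi-Poisson $(G_N,\g_N)$-algebra for the bracket $\bracket{-}{-}$ furnished by Lemma~\ref{fdbtopb}, and to observe that all of them but one are already available. Indeed, Lemma~\ref{fdbtopb} guarantees that $\bracket{-}{-}$ is skew-symmetric, a derivation in each variable, and satisfies both the compatibility identity \eqref{ssdM--} with the $\g_N$-action and the $G_N$-equivariance identity \eqref{ssdM++}; and the trivector $\phi_N$ of \eqref{phiN} is skew-symmetric, $\g_N$-invariant and $G_N$-invariant by Section~\ref{QPA5}. Hence the sole remaining point is the modified Jacobi identity \eqref{ssdM} with $\phi=\phi_N$, that is, that the Jacobiator
$$
J(x,y,z)=\bracket{x}{\bracket{y}{z}}+\bracket{y}{\bracket{z}{x}}+\bracket{z}{\bracket{x}{y}}
$$
equals $\phi_N(x,y,z)$ for all $x,y,z\in A_N$.

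First I would reduce this identity to the generators $a_{ij}$ of $A_N$. Both sides of \eqref{ssdM} are derivations in each of their three variables: the Jacobiator $J$ is a triderivation because $\bracket{-}{-}$ is a biderivation (when one expands $J(x,y,zz')$ the cross-terms cancel in pairs thanks to skew-symmetry, leaving $J(x,y,z)z'+zJ(x,y,z')$, and $J$ is manifestly cyclic), while $\phi_N(x,y,z)$ is a triderivation because each $w\in\g_N$ acts by a derivation and $A_N$ is commutative. As $J-\phi_N(-,-,-)$ is then a triderivation that vanishes identically as soon as it vanishes on generators, it suffices to prove \eqref{ssdM} for $x=a_{pq}$, $y=b_{rs}$, $z=c_{uv}$ with $a,b,c\in A$ and $p,q,r,s,u,v\in\{1,\dots,N\}$.

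On generators the left-hand side is computed by Lemma~\ref{qP--}: it equals the right-hand side of \eqref{Jacobi_tb}, expressed through the triple bracket $\triple{-}{-}{-}$ associated with $\double{-}{-}$. Here I would invoke the hypothesis that $\double{-}{-}$ is quasi-Poisson, i.e.\ that \eqref{quasi-Poisson-} holds, to replace $\triple{-}{-}{-}$ by $\triple{-}{-}{-}_E$, whose value is given explicitly by the eight-term formula \eqref{compE}. Substituting the eight terms of $\triple{a}{b}{c}_E$ and of $\triple{a}{c}{b}_E$ into \eqref{Jacobi_tb} and contracting the indices as prescribed there yields an explicit element of $A_N$, namely a signed sum of products of three generators.

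The main obstacle is the final matching: one must check that this element coincides with $\phi_N(a_{pq},b_{rs},c_{uv})$. I would compute the latter directly from \eqref{phiN}, using the action \eqref{gN_on_AN} of the elementary matrices in the form $f_{kl}a_{ij}=\delta_{lj}a_{ik}-\delta_{ik}a_{lj}$, to evaluate $-(f_{ij}a_{pq})(f_{jk}b_{rs})(f_{ki}c_{uv})+(f_{jk}a_{pq})(f_{ij}b_{rs})(f_{ki}c_{uv})$ and sum over $i,j,k$. The Kronecker deltas collapse these sums to a signed sum of products of three generators, and the plan is to verify that, after using the commutativity of $A_N$, it agrees term by term with the expression coming from \eqref{compE}. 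This index bookkeeping is the genuinely laborious step; everything else is formal.
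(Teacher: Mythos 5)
Your proposal follows the paper's proof exactly: reduce the modified Jacobi identity to generators by observing that both sides are triderivations, compute the left-hand side via Lemma~\ref{qP--} together with \eqref{quasi-Poisson-} and \eqref{compE}, compute the right-hand side by contracting \eqref{phiN} against the generator action \eqref{gN_on_AN}, and match the resulting signed sums of triple products. The only content you leave implicit is the explicit index bookkeeping, which is precisely the computation the paper carries out in full; your outline is correct.
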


\begin{proof} By Lemma~\ref{fdbtopb}, we need only to verify that
for all $x,y,z\in A_N$,
\begin{equation}\label{veriftriple}
 \bracket{x} {\bracket{y}{z}}  + \bracket{y} {\bracket{z}{x}} + \bracket {z} {\bracket{x}{y}} = \phi_N(x, y, z). 
\end{equation}
It is straightforward to see that both sides of this formula are
skew-symmetric $A_N$-valued trilinear forms on $A_N$ which are
derivations in each variable. Therefore it suffices to verify
\eqref{ssdM} for the generators of $A_N$. Let $\mathcal L$ and
$\mathcal R$ be respectively the left-hand and right-hand sides of
\eqref{veriftriple} for $x=a_{pq}, y=b_{rs}, z=c_{uv}$ with
$p,q,r,s,u,v\in \{1,\dots, N\}$ and $a,b,c\in A$. Computing $\mathcal L$ by
Lemma~\ref{qP--} and then using \eqref{quasi-Poisson-} and
\eqref{compE}, we obtain that
\begin{eqnarray*}
\mathcal L & = &
a_{uq} \delta_{ps} (bc)_{rv}+ \delta_{uq} (ab)_{ps} c_{rv}+(ca)_{uq} b_{ps} \delta_{rv}+ c_{uq} a_{ps} b_{rv}\\
&& -  \delta_{uq} a_{ps} (bc)_{rv}-a_{uq} b_{ps} c_{rv}-(ca)_{uq} \delta_{ps} b_{rv}-c_{uq} (ab)_{ps} \delta_{rv}\\
&& - a_{rq} \delta_{pv} (cb)_{us}- \delta_{rq} (ac)_{pv} b_{us}-(ba)_{rq} c_{pv} \delta_{us}- b_{rq} a_{pv} c_{us}\\
&& +\delta_{rq} a_{pv} (cb)_{us}+a_{rq} c_{pv} b_{us}+(ba)_{rq} \delta_{pv} c_{us}+b_{rq} (ac)_{pv} \delta_{us}.
\end{eqnarray*}
To compute $\mathcal R$, we observe that
\begin{eqnarray*}
&& (f_{ij}\otimes f_{jk}\otimes f_{ki}) (a_{pq} \otimes b_{rs} \otimes c_{uv})\\
&=& (\delta_{jq} a_{pi} - \delta_{pi} a_{jq}) (\delta_{ks} b_{rj} - \delta_{rj} b_{ks})(\delta_{iv} c_{uk} - \delta_{uk} c_{iv})\\
&=& \delta_{jq} a_{pi} \delta_{ks} b_{rj} \delta_{iv} c_{uk}+\delta_{jq} a_{pi} \delta_{rj} b_{ks} \delta_{uk} c_{iv}\\
&& + \delta_{pi} a_{jq} \delta_{ks} b_{rj} \delta_{uk} c_{iv}+ \delta_{pi} a_{jq} \delta_{rj} b_{ks} \delta_{iv} c_{uk}\\
&&  - \delta_{pi} a_{jq}  \delta_{ks} b_{rj} \delta_{iv} c_{uk} - \delta_{pi} a_{jq} \delta_{rj} b_{ks} \delta_{uk} c_{iv} \\
&&- \delta_{jq} a_{pi} \delta_{ks} b_{rj} \delta_{uk} c_{iv} - \delta_{jq} a_{pi} \delta_{rj} b_{ks} \delta_{iv} c_{uk}\\
&=&   a_{pv}   b_{rq}   c_{us}+ a_{pi} \delta_{rq} b_{us}  c_{iv} +   a_{jq}
  b_{rj} \delta_{us} c_{pv}+   a_{rq}   b_{ks} \delta_{pv} c_{uk}\\
&& - a_{jq}    b_{rj} \delta_{pv} c_{us}   -   a_{rq}   b_{us}   c_{pv}
-   a_{pi} b_{rq} \delta_{us} c_{iv} -   a_{pv} \delta_{rq} b_{ks}   c_{uk} \\
&=&   a_{pv}   b_{rq}   c_{us}  + (ac)_{pv} \delta_{rq} b_{us}
  +  (ba)_{rq} \delta_{us} c_{pv}     +   a_{rq}   (cb)_{us} \delta_{pv}\\
&&- (ba)_{rq} \delta_{pv} c_{us} -   a_{rq}   b_{us}   c_{pv}
 -   (ac)_{pv}  b_{rq} \delta_{us}   -   a_{pv} \delta_{rq} (cb)_{us}  .
\end{eqnarray*}
Similarly, we have
\begin{eqnarray*}
&& (f_{jk}\otimes f_{ij}\otimes f_{ki}) (a_{pq} \otimes b_{rs} \otimes c_{uv})\\
&=& (\delta_{kq} a_{pj} - \delta_{pj} a_{kq}) (\delta_{js} b_{ri} - \delta_{ri} b_{js})(\delta_{iv} c_{uk} - \delta_{uk} c_{iv})\\
&=& \delta_{kq} a_{pj} \delta_{js} b_{ri} \delta_{iv} c_{uk}+\delta_{kq} a_{pj} \delta_{ri} b_{js} \delta_{uk} c_{iv}\\
&&+ \delta_{pj} a_{kq} \delta_{js} b_{ri} \delta_{uk} c_{iv}+ \delta_{pj} a_{kq} \delta_{ri} b_{js} \delta_{iv} c_{uk}\\
&& - \delta_{pj} a_{kq}  \delta_{js} b_{ri} \delta_{iv} c_{uk} - \delta_{pj} a_{kq} \delta_{ri} b_{js} \delta_{uk} c_{iv}\\
&&- \delta_{kq} a_{pj} \delta_{js} b_{ri} \delta_{uk} c_{iv} - \delta_{kq} a_{pj} \delta_{ri} b_{js} \delta_{iv} c_{uk} \\
&=&  a_{ps}   b_{rv}   c_{uq}+  a_{pj}  b_{js} \delta_{uq} c_{rv}+   a_{uq}
\delta_{ps} b_{ri}   c_{iv}+   a_{kq}  b_{ps} \delta_{rv} c_{uk}\\
&& -   a_{kq}  \delta_{ps} b_{rv}   c_{uk} -   a_{uq}  b_{ps}   c_{rv}
-   a_{ps}  b_{ri} \delta_{uq} c_{iv} -   a_{pj} b_{js} \delta_{rv} c_{uq} \\
& = & a_{ps}   b_{rv}   c_{uq}+  (ab)_{ps} \delta_{uq} c_{rv}+ a_{uq} \delta_{ps} (bc)_{rv}+ b_{ps} \delta_{rv} (ca)_{uq}\\
&& -   (ca)_{uq}  \delta_{ps} b_{rv}    -   a_{uq} b_{ps}   c_{rv}
-   a_{ps}(bc)_{rv} \delta_{uq}   -   (ab)_{ps} \delta_{rv} c_{uq}  .\\
\end{eqnarray*}
 Substituting the resulting expressions in the formula
 $$\mathcal R=  - (f_{ij}\otimes f_{jk}\otimes f_{ki}) (a_{pq} \otimes b_{rs} \otimes
c_{uv})+(f_{jk}\otimes f_{ij}\otimes f_{ki}) (a_{pq} \otimes b_{rs} \otimes
c_{uv})$$ we obtain that
 $\mathcal R=\mathcal L$.
\end{proof}

 Under conditions of Lemma \ref{qP}, the $\g_N$-invariant subalgebra $A_N^{\g_N}$ of $ A_N$  is a Poisson algebra,
 and   the trace map $\tr:\check A\to A_N^{\g_N} $ is a Lie algebra homomorphism.

\subsection{Remark}   The original definitions of Van den Bergh apply when $\kk$ is a field of characteristic
zero; the  generalization   above  to   arbitrary commutative rings
is straightforward. We slightly modified the definition of a
quasi-Poisson double bracket in order to get rid of fractional
coefficients  appearing in \cite{VdB}:  a quasi-Poisson double
bracket in our sense is $2$ times a quasi-Poisson double bracket in the sense of \cite{VdB}.

 \section{Fox derivatives and Fox pairings}\label{sectionFOX}

We introduce  and study     Fox pairings   in   augmented
algebras. 

   \subsection{Fox derivatives}\label{kd1}   Fox derivatives were first introduced by Fox in his study of the
group rings of free groups. Fox' definitions   extend to  arbitrary
algebras  endowed with   augmentation homomorphisms. More precisely,
let $A$ be an algebra (over $\kk$) endowed with
   an    algebra homomorphism
$\varepsilon :A\to
 \kk$. A   linear  map $\partial :{A} \to
{A}$ is   a  {\it  left}  (respectively,   a {\it right) Fox
derivative} if for all $a,b\in {A}$, we have $\partial (ab)=\partial
( a) \varepsilon (b) + a \partial (b)$  (respectively, $\partial
(ab)=\partial ( a) b + \varepsilon (a)  \partial (b)$).

   For example, for   $e\in A$, the map ${A} \to {A}$ carrying any
$a\in A$ to $(a-\varepsilon(a)1)e$ is   a left   Fox derivative and
the map ${A} \to {A}$ carrying any $a\in A$ to $e(a-\varepsilon(a)1)$
is a right Fox derivative. Such  derivatives   are said to be {\it
inner}.

\subsection{Fox pairings}\label{kd2}
Let $A=(A,   \varepsilon )$ be an augmented algebra.
   A   {\it Fox pairing} in $A$  or,
shorter, an   {\it F-pairing} in $A$   is a  bilinear  map $\rho:{A}
\times {A} \to {A}$  which is a left Fox derivative with respect to
the first variable and a right Fox derivative with respect to the second variable. 
In the case of group algebras, an equivalent notion   
was   introduced independently in   \cite{Pa} under the name of ``biderivation''
and in \cite{Tu} under the name of ``$\Delta$-form.''  Here we follow the terminology of \cite{MT}.

Note for the record the  product formulas
\begin{equation}\label{equ1}\rho (a_1 a_2, b) =\rho (a_1, b)\,  \varepsilon (a_2) +  a_1   \rho (a_2, b) \quad {\rm for \,\,\, any} \,\,\, a_1,a_2,b
\in A,  \end{equation}
\begin{equation}\label{equ2}\rho (a, b_1 b_2)= \rho (a, b_1) b_2 + \varepsilon  (b_1) \rho (a, b_2)
\quad {\rm for \,\,\, any} \,\,\, a , b_1,b_2  \in {A}.\end{equation}

For example, any     $e\in A$ gives rise to  an F-pairing  $\rho_e$
defined by
$$
\rho_e (a,b)=(a -\varepsilon   (a)1 ) \, e\,
(b-\varepsilon  (b) 1)
$$
 for all $a,b\in A$.  We call such an F-pairing {\it inner}.

Fox pairings in $A$ form a $\kk$-module under the usual
addition/multiplication by scalars   of bilinear forms. The
inner  F-pairings form a submodule of this module. Two
F-pairings $\rho$ and $\rho'$ in $A$ are \emph{equivalent} if they
differ by  an inner  F-pairing, i.e$.$ if there exists
$e\in A$ such that $\rho-\rho'=\rho_e$.

\subsection{Fox pairings in Hopf algebras}\label{kd3}
Suppose   that  $A=(A,\Delta,\varepsilon,S)$ is   a Hopf algebra
with comultiplication $\Delta$,   counit $\varepsilon$ and antipode
$S$. We shall always use the counit $\varepsilon$ to define
F-pairings in $A$. The antipode $S$ induces a transposition on the
F-pairings in $A$ as follows.  The \emph{transpose} of a
 bilinear  form $\rho: A \times A \to A$ is the  bilinear
form $\overline{\rho}: A \times A \to A$ defined by
$$
\overline{\rho}(a,b) =   S\rho\!\left(S(b),S(a)\right)
$$
for   $a,b\in A$.  For example, the transpose of the  inner
F-pairing $\rho_e$   is $\rho_{S(e)}$ for all $e\in A$. Recall that
a Hopf algebra is  \emph{involutive} if its antipode is an
involutive map.

\begin{lemma}\label{transpose}
If $A$ is involutive, then the  transpose  $\overline \rho$  of any F-pairing $\rho: A
\times A \to A$ is an F-pairing  and $\overline{\overline
\rho}=\rho$.
\end{lemma}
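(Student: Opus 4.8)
The plan is to reduce everything to the standard structural properties of the antipode of a Hopf algebra, namely that $S$ is an algebra anti-homomorphism ($S(xy)=S(y)S(x)$ and $S(1)=1$), that it is compatible with the counit ($\varepsilon\circ S=\varepsilon$), and---this is where the hypothesis enters---that $S^2=\id$. Bilinearity of $\overline\rho$ is immediate, since $S$ is linear and $\rho$ is bilinear, so the only content is to check the two Fox-derivative conditions and the equality $\overline{\overline\rho}=\rho$.

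First I would verify that $\overline\rho$ is a left Fox derivative in its first variable. Starting from the definition, $\overline\rho(a_1a_2,b)=S\rho\!\left(S(b),S(a_2)S(a_1)\right)$, where I have used that $S$ reverses products. I then apply the right-Fox-derivative property \eqref{equ2} of $\rho$ in its second slot to expand $\rho\!\left(S(b),S(a_2)S(a_1)\right)$ into two terms, and push $S$ through. Using $S^2=\id$ to rewrite $S(S(a_1))=a_1$, using $\varepsilon\circ S=\varepsilon$, and recognizing $S\rho(S(b),S(a_i))=\overline\rho(a_i,b)$, the two terms collapse to $a_1\overline\rho(a_2,b)+\varepsilon(a_2)\overline\rho(a_1,b)$, which is exactly \eqref{equ1} for $\overline\rho$.

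Next I would run the dual computation for the second variable: from $\overline\rho(a,b_1b_2)=S\rho\!\left(S(b_2)S(b_1),S(a)\right)$ I apply the left-Fox-derivative property \eqref{equ1} of $\rho$ in its first slot, push $S$ through term by term, and again invoke $S^2=\id$ and $\varepsilon\circ S=\varepsilon$ to obtain $\overline\rho(a,b_1)b_2+\varepsilon(b_1)\overline\rho(a,b_2)$, which is \eqref{equ2} for $\overline\rho$. This establishes that $\overline\rho$ is an F-pairing. Finally, the involution claim is a one-line computation: unfolding the definition twice gives $\overline{\overline\rho}(a,b)=S\,\overline\rho(S(b),S(a))=S^2\rho\!\left(S^2(a),S^2(b)\right)$, and $S^2=\id$ yields $\rho(a,b)$.

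There is no serious obstacle here; the argument is pure bookkeeping. The only point requiring discipline is tracking the order reversal produced by the anti-homomorphism property of $S$ and being explicit about every place where the involutivity $S^2=\id$ is used. Note in particular that involutivity is needed not merely for $\overline{\overline\rho}=\rho$ but already for the Fox-derivative identities, since without it the factor $S(S(a_1))$ (resp.\ $S(S(b_2))$) would fail to simplify back to $a_1$ (resp.\ $b_2$); this is also why the hypothesis that $A$ be involutive cannot be dropped.
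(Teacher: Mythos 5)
Your argument is correct and coincides with the paper's own proof: the paper likewise expands $\overline{\rho}(a_1a_2,b)=S\rho\!\left(S(b),S(a_2)S(a_1)\right)$, applies the right Fox derivative property of $\rho$ in its second slot, pushes the anti-homomorphism $S$ through using $S^2=\id$ and $\varepsilon\circ S=\varepsilon$, treats the second variable symmetrically, and concludes $\overline{\overline\rho}=\rho$ by the same one-line computation. Your observation that involutivity is already needed for the Fox-derivative identities (not only for the involution claim) is accurate.
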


\begin{proof}
  For   $a_1, a_2, b  \in A$, we have
\begin{eqnarray*}
\overline{\rho}(a_1a_2,b)&=&   S\rho\!\left(S (b),S(a_1 a_2) \right)\\
&=&  S\rho\!\left(S (b),S(a_2) S( a_1) \right)  \\
&=& S \big(\rho\!\left(S (b),S(a_2)  \right)  S( a_1) +\varepsilon (S(a_2)) \, \rho\!\left(S (b),S(a_1)  \right) \big) \\
&=& a_1  S  \rho\!\left(S (b),S(a_2)  \right)+ \varepsilon (a_2)\, S  \rho\!\left(S (b),S(a_1)  \right) \\
&=& a_1  \overline{\rho}(a_2,b)+  \overline{\rho}(a_1,b) \, \varepsilon (a_2).
\end{eqnarray*}
A similar computation shows that  $\overline{\rho}(a, b_1b_2)=
\overline{\rho}(a ,b_1) b_2+  \varepsilon (b_1) \overline{\rho}(a
,b_2)$ for all $a, b_1, b_2\in A$. Thus, $\overline \rho$ is an
F-pairing.  For all $a,b\in A$,
$$
\overline{\overline \rho}(a,b)  = S \overline \rho\!\left(S (b),S(a)  \right) = S^2 \rho (S^2(a), S^2(b))=\rho(a,b).
$$

\vspace{-0.5cm}
\end{proof}

An  F-pairing $\rho $ in an involutive Hopf algebra $A$ is
\emph{skew-symmetric} if $\overline{\rho}=-\rho$.  Any F-pairing
$\rho$ in such an $A$ induces a skew-symmetric F-pairing
$\rho^s=\rho - \overline{\rho}$ in $A$.

 We will need an alternate expression for $\overline \rho$,
which involves the comultiplication $\Delta$ of~$A$.  In this
expression and in the sequel we   use Sweedler's notation for
comultiplication: for $a\in A$ we write $ \Delta(a)=\sum_{(a)}
a'\otimes a''$ meaning that there is a finite family   $(a'_i ,
a''_i )_i$ in $A \times A$ such that $\Delta(a)=\sum_i a'_{i}
\otimes a''_{i}$. Similarly,  we write
$$(\Delta \otimes \id) \Delta(a)=(\id \otimes \Delta) \Delta
(a)=\sum_{(a)} a'\otimes a'' \otimes a'''.$$

\begin{lemma}\label{transpose+}
If $A$ is involutive, then for any F-pairing $\rho$ in $A$ and any
$a,b\in A$,  we have $\overline{\rho}(a,b) =  \sum_{(a),(b)} a'
S\rho\!\left(b'',a''\right) b'$.
\end{lemma}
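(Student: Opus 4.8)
The plan is to exploit that $S$ is an involutive anti-automorphism of $A$ in order to replace the asserted equality by a cleaner one, and then to derive that one by two applications of the Fox derivative rules \eqref{equ1}, \eqref{equ2} combined with the antipode axioms. Since $S^2=\id$ and $S(xyz)=S(z)S(y)S(x)$, applying $S$ to the claimed formula turns it into the equivalent identity
\[
\rho(S(b),S(a)) = \sum_{(a),(b)} S(b')\,\rho(b'',a'')\,S(a'),
\]
where on the left I use $S\big(S\rho(S(b),S(a))\big)=\rho(S(b),S(a))$ and on the right $S\big(a'\,S\rho(b'',a'')\,b'\big)=S(b')\,\rho(b'',a'')\,S(a')$. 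Because $S$ is bijective, it suffices to establish this displayed identity.

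First I would process the first variable, fixing $a$ and summing over $(b)$. Writing the left Fox derivative rule \eqref{equ1} in the form $S(b')\rho(b'',a'')=\rho(S(b')b'',a'')-\varepsilon(b'')\,\rho(S(b'),a'')$ and summing, the antipode axiom $\sum_{(b)} S(b')b''=\varepsilon(b)1$ together with the vanishing $\rho(1,\,\cdot\,)=0$ (itself a consequence of \eqref{equ1}) kills the first term, while $\sum_{(b)}\varepsilon(b'')S(b')=S(b)$ collapses the second, giving $\sum_{(b)} S(b')\rho(b'',a'')=-\rho(S(b),a'')$. Substituting this into the displayed identity reduces it to
\[
\sum_{(a)} \rho(S(b),a'')\,S(a') = -\,\rho(S(b),S(a)).
\]

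Next I would treat the second variable by the mirror computation. Using the right Fox derivative rule \eqref{equ2} in the form $\rho(x,a'')S(a')=\rho(x,a''S(a'))-\varepsilon(a'')\rho(x,S(a'))$ with $x=S(b)$, summing over $(a)$, and invoking $\rho(\,\cdot\,,1)=0$ together with $\sum_{(a)}\varepsilon(a'')S(a')=S(a)$, the target follows provided $\sum_{(a)} a''S(a')=\varepsilon(a)1$. This last point is exactly where the involutivity hypothesis is genuinely used, and I expect it to be the only real subtlety: the identity $\sum_{(a)} a''S(a')=\varepsilon(a)1$ is \emph{not} one of the two defining antipode axioms $\sum a'S(a'')=\varepsilon(a)1=\sum S(a')a''$, but rather the assertion that $S$ is simultaneously an antipode for the co-opposite Hopf algebra $A^{\mathrm{cop}}$, which holds precisely because $S=S^{-1}$. (For a group algebra, the case of principal interest, it is immediate from cocommutativity, since $\Delta(g)=g\otimes g$.) Assembling the two reductions yields the displayed identity, and applying $S$ once more recovers the formula $\overline{\rho}(a,b) = \sum_{(a),(b)} a'\,S\rho(b'',a'')\,b'$ of the lemma.
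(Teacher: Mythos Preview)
Your proof is correct and follows essentially the same route as the paper's. The paper also isolates the two intermediate identities $\sum_{(b)} S(b')\rho(b'',x)=-\rho(S(b),x)$ and $\sum_{(a)} \rho(x,a'')S(a')=-\rho(x,S(a))$ (the latter obtained by substituting $S(a)$ for $a$ in the companion identity and invoking $S^2=\id$), and then assembles them; the only cosmetic difference is that you apply $S$ at the outset to strip it from $\overline{\rho}$, whereas the paper keeps the outer $S$ and applies it at the end.
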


\begin{proof}
The equality $\sum_{(b)} S(b')b''=\varepsilon(b)  1$  implies
that
\begin{eqnarray*}
 0\ = \ {\rho}(\varepsilon(b) 1,a)&=&   \sum_{(b)} \rho(S(b') b'',a)\\
&=&  \sum_{(b)} \big( \rho(S(b') ,a)  \, \varepsilon (b'') +S(b') \rho(  b'',a) \big) \\
&=&  \sum_{(b)} \big( \rho(S(b' \varepsilon (b'')) ,a)    +S(b') \rho(  b'',a) \big)\\
&=& \rho(S(b),a)+  \sum_{(b)}  S(b') \rho(  b'',a)  .
\end{eqnarray*}
Therefore \begin{equation}\label{firstrho+} \rho(S(b),a)=-
\sum_{(b)} S(b') \rho( b'',a) .\end{equation}
 A similar computation starting from $\sum_{(a)}
S(a')a''=\varepsilon(a)  1$ shows that $$ \rho(b ,a)=-
\sum_{(a)} \rho( b, S(a'))a'' .$$ Replacing here $a$ by $S(a)$ and
using the involutivity of $A$, we obtain that
\begin{equation}\label{firstrho++} \rho(b, S(a))=-  \sum_{(a)}   \rho(
b, a'' ) S(a') .\end{equation}  
Now, we can conclude: using \eqref{firstrho+} and then \eqref{firstrho++}, we obtain that
\begin{eqnarray*}
\overline{\rho}(a,b) \ = \  S\rho\!\left(S(b),S(a)\right) &=&
-S\Bigg( \sum_{(b)} S(b')  \rho(b'', S(a) )  \Bigg) \\
&= & S\Bigg( \sum_{(a), (b)} S(b')  \rho(b'', a'' )  S(a')\Bigg)\\
&=&\sum_{(a),(b)} a' S\rho\!\left(b'',a''\right) b'.
\end{eqnarray*}

\vspace{-0.5cm}
\end{proof}

\section{From Fox pairings to double brackets}\label{FFPTDB}

In this section, $A=(A,\Delta,\varepsilon,S)$ is   a Hopf algebra with comultiplication
$\Delta$,   counit $\varepsilon$ and antipode $S$. We show, under
certain assumptions, how  to derive a double bracket in  $A$ from a Fox pairing  in $A$.

\subsection{A double bracket from a Fox pairing}
Any bilinear form $\rho: A\times A\to A$ determines a  linear map
$\double{-}{-}^\rho:A \otimes A \to A \otimes A$ by
$$
\double{a}{b}^{\rho} =
\sum_{(a), (b), (\rho(a'',b''))} b'
S\!\left(\big(\rho( a'', b'')\big)'\right)  a' \otimes \big(\rho( a'', b'')\big)''
$$
for any $a,b\in A$.

\begin{lemma}\label{derivation_sv}
If $\rho $ is an F-pairing in $A$, then $\double{-}{-}^\rho $ is a
derivation in the second variable.
\end{lemma}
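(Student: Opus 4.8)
I want to show that $\double{a}{bc}^{\rho} = b\,\double{a}{c}^{\rho} + \double{a}{b}^{\rho}\,c$ for all $a,b,c\in A$, where the bimodule structure on $A^{\otimes 2}$ is the outer one from \eqref{outer}. The natural strategy is to compute $\double{a}{bc}^{\rho}$ directly from the defining formula and reorganize the result into the required two-term sum. The key algebraic input is that $\rho$ is a \emph{right} Fox derivative in its second variable, so that $\rho(a'', (bc)'') $ expands via \eqref{equ2}, together with the fact that the comultiplication $\Delta$ is an algebra homomorphism, so $\Delta(bc) = \Delta(b)\Delta(c)$ and hence in Sweedler notation $(bc)' \otimes (bc)'' = \sum b'c' \otimes b''c''$.

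\textbf{First steps.} First I would substitute $bc$ for $b$ in the definition and use coassociativity/multiplicativity of $\Delta$ to write the comultiplication factors of $bc$ as products $b'c'$ (outer factors) and $b''c''$ (the factors fed into $\rho$). Then I would apply the right-Fox-derivative property \eqref{equ2} to $\rho(a'', b''c'')$, splitting it as $\rho(a'',b'')\,c'' + \varepsilon(b'')\,\rho(a'',c'')$. This produces two groups of terms. The plan is to show that the first group, after using $\sum_{(b)} b'\,\varepsilon(b'') \otimes \cdots = \cdots$ type counit identities to collapse the spurious $b''$ (respectively $c''$) factors, reassembles into $\double{a}{b}^{\rho}\,c$, and the second group into $b\,\double{a}{c}^{\rho}$.

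\textbf{The technical core.} The delicate bookkeeping is the placement of the antipode-twisted factor $S\big((\rho(a'',\cdot))'\big)$ and the surviving outer comultiplication factors. In the term coming from $\rho(a'',b'')\,c''$, I must verify that the extra $c''$ lands on the right-hand tensor slot so that, after summing $\sum_{(c)} S(\cdots) \otimes (\cdots) c'' $ with the outer $c'$ on the far left producing $\cdots \otimes (\cdots)\,c'$—wait, more carefully, I expect the $c$-factors to recombine via $\Delta$ being coassociative so that the outer right multiplication by $c$ in \eqref{outer} emerges; symmetrically the $b$-factors feeding the counit $\varepsilon(b'')$ collapse by $\sum_{(b)} b'\varepsilon(b'') = b$, leaving left multiplication by $b$. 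I expect the main obstacle to be precisely this matching of Sweedler indices: ensuring that when $\Delta(bc)$ is expanded, the factors distribute so that one summand yields right multiplication by $c$ on the outer $A$-bimodule and the other yields left multiplication by $b$, with no residual $\varepsilon$ terms left over. This is a routine but index-heavy verification, and the only real check is that the counit axioms $\sum b'\varepsilon(b'')=b=\sum\varepsilon(b')b''$ are invoked on the correct slots. Since the computation is entirely formal once the expansions are set up, I do not expect any genuine conceptual difficulty beyond careful tracking of the Sweedler summations.
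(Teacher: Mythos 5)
Your plan is correct and is essentially the paper's own argument: the paper proves the same statement by showing that for any right Fox derivative $\partial$ and any $v\in A$ the map $b\mapsto \sum_{(b),(\partial(b''))} b'\,S\big((\partial(b''))'\big)\,v\otimes(\partial(b''))''$ is a derivation, using exactly the expansion $\Delta(bc)=\Delta(b)\Delta(c)$, the splitting \eqref{equ2}, and the two collapses you describe. One small precision: the collapse of the $c$-factors in the first group is not a counit identity but the antipode axiom $\sum_{(c)} c'S(c'')=\varepsilon(c)1$, applied after $S$ reverses the product in $S\big((\rho(a'',b'')c'')'\big)=S\big((c'')'\big)S\big((\rho(a'',b''))'\big)$; only the $b$-factors in the second group collapse by the counit axiom alone.
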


\begin{proof}
For any    right Fox derivative $\partial$ in $A$ and any $v\in
{A}$, we define a   linear  map $\delta= \delta_{v, \partial} :{A} \to A\otimes A$  by
\begin{equation}\label{delta}
\delta(b) = \sum_{(b),(\partial(b''))} b'
S\left(\big(\partial(b'')\big)'\right)  v \otimes
\big(\partial(b'')\big)''
\end{equation}
for  $b\in A$.
We claim that this map is a  derivation. This will imply the lemma because  for any $a\in A$, we have
$\double{a}{-}^\rho= \sum_{(a)} \delta_{a',\rho(a'',-)}$.

We now  prove the above claim.  For any $a,b\in A$, we    have
\begin{eqnarray*}
 \delta(ab) &=&
\sum_{(ab),(\partial((ab)''))} (ab)' S\left(\big(\partial((ab)'')\big)'\right) v \otimes \big(\partial((ab)'')\big)''\\
&=&
\sum_{(a),(b),(\partial(a''b''))} a'b ' S\left(\big(\partial(a''b'')\big)'\right) v \otimes \big(\partial(a''b'')\big)''.
\end{eqnarray*}
Using the fact that $\partial(a''b'') = \partial(a'') b'' + \varepsilon(a'')\partial(b'')$,  this sum splits into two parts:
\begin{eqnarray*}
&&
\sum_{(a),(b),(\partial(a'')b'')} a'b ' S\left(\big(\partial(a'')b''\big)'\right) v \otimes \big(\partial(a'')b''\big)''\\
& = &
\sum_{(a),(b),(\partial(a''))} a'b ' S\left(\big(\partial(a'')\big)'b''\right) v \otimes \big(\partial(a'')\big)''b'''\\
& = &
\sum_{(a),(b),(\partial(a''))} a'b ' S(b'')S\left(\big(\partial(a'')\big)'\right)  v \otimes \big(\partial(a'')\big)''b'''\\
& = &
\sum_{(a),(b),(\partial(a''))} a' \varepsilon(b') S\left(\big(\partial(a'')\big)'\right)  v \otimes \big(\partial(a'')\big)''b''\\
& = &
\sum_{(a),(\partial(a''))} a'  S\left(\big(\partial(a'')\big)'\right)  v \otimes \big(\partial(a'')\big)''b
\quad \quad \quad \quad \quad \quad  = \quad \delta(a) b
\end{eqnarray*}
and
\begin{eqnarray*}
&&
\sum_{(a),(b),(\varepsilon(a'')\partial(b''))} a'b '
S\left(\big(\varepsilon(a'')\partial(b'')\big)'\right) v \otimes \big(\varepsilon(a'')\partial(b'')\big)''\\
 & = &
\sum_{(a),(b),(\partial(b''))} a'\varepsilon(a'') b ' S\left(\big(\partial(b'')\big)'\right) v \otimes \big(\partial(b'')\big)''\\
 & = &
\sum_{(b),(\partial(b''))} ab ' S\left(\big(\partial(b'')\big)'\right) v \otimes \big(\partial(b'')\big)''
\quad \quad \quad \quad \quad \quad \quad  \quad \quad = \quad a\delta(b).
\end{eqnarray*}

\vspace{-0.5cm}
\end{proof}

\begin{lemma}\label{double_bracket}
If  $A$ is involutive and  $\rho $ is a skew-symmetric F-pairing in
$A$, then $\double{-}{-}^\rho$ is a double bracket.
\end{lemma}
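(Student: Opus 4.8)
The derivation property in the second variable is already the content of Lemma~\ref{derivation_sv}, which holds for \emph{any} F-pairing $\rho$ and makes no use of either involutivity or skew-symmetry. Thus the entire weight of the present statement rests on the remaining axiom of a double bracket, namely the skew-symmetry relation $\double{b}{a}^\rho = -P_{21}\double{a}{b}^\rho$ for all $a,b\in A$; this is where the two extra hypotheses (that $A$ is involutive and that $\overline\rho=-\rho$) must enter. So the plan is to take the derivation property for granted and concentrate solely on verifying skew-symmetry.

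First I would unwind both sides in Sweedler notation. Writing $c=\rho(a'',b'')$, the definition of $\double{-}{-}^\rho$ gives
$$P_{21}\double{a}{b}^\rho = \sum c'' \otimes b'\, S(c')\, a',$$
whereas, writing $d=\rho(b'',a'')$,
$$\double{b}{a}^\rho = \sum a'\, S(d')\, b' \otimes d''.$$
The key move is to rewrite the factor $d=\rho(b'',a'')$ using the skew-symmetry hypothesis in the form provided by Lemma~\ref{transpose+}: since $\rho=-\overline\rho$, that lemma (which is where involutivity is used) yields
$$\rho(b'',a'') = -\sum (b'')'\, S\rho\!\left((a'')'',(b'')''\right)\, (a'')'.$$
I would substitute this into the expression for $\double{b}{a}^\rho$, expand all the resulting comultiplications by coassociativity, and then simplify. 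The computation uses only the standard Hopf-algebra identities together with the Fox-derivative product rules \eqref{equ1}, \eqref{equ2}: the anti-multiplicativity $S(xy)=S(y)S(x)$ to distribute the outer antipode across the product $S\rho(\cdots)$, the counit/antipode relations $\sum S(x')x''=\varepsilon(x)1=\sum x'S(x'')$ to telescope the surplus comultiplication legs into counits, and the involutivity $S^2=\id$. The aim is to bring $\double{b}{a}^\rho$ into exactly the shape $-\sum c''\otimes b'\,S(c')\,a'$, matching $-P_{21}\double{a}{b}^\rho$ term by term.

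The main obstacle is the Sweedler bookkeeping. After the substitution one faces three nested levels of comultiplication on each of $a$ and $b$, compounded with the internal comultiplication of $\rho(\cdot,\cdot)$, and the terms must be regrouped so that each antipode relation applies to the correct leg. Getting the order of factors right when applying anti-multiplicativity of $S$, and tracking precisely which comultiplication leg is absorbed by each counit, is the delicate part; once the terms are organized correctly, the collapse onto $-P_{21}\double{a}{b}^\rho$ is forced by the Hopf axioms and no further hypotheses are needed.
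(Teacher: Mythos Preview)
Your approach is correct and essentially the same as the paper's. The only difference is organizational: rather than substituting the skew-symmetry at the outset, the paper first proves the general identity $\double{-}{-}^{\overline\rho} = P_{21}\double{-}{-}^\rho P_{21}$ valid for \emph{any} F-pairing $\rho$ (via Lemma~\ref{transpose+} and the same Sweedler computation you outline), and then the desired skew-symmetry of $\double{-}{-}^\rho$ follows in one line from $\rho=-\overline\rho$.
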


\begin{proof}
  We start by proving that for an arbitrary F-pairing $\rho$ in $A$,
\begin{equation}\label{transpose_transpose}
\double{-}{-}^{\overline \rho} = P_{21} \double{-}{-}^\rho P_{21}.
\end{equation}
By definition, we have
\begin{eqnarray*}
\double{a}{b}^{\overline \rho} &=&
\sum_{(a), (b), (\overline\rho(a'',b''))} b'
S\!\left(\big(\overline\rho( a'', b'')\big)'\right)  a' \otimes \big(\overline\rho( a'', b'')\big)''
\end{eqnarray*}
where, by Lemma \ref{transpose+},
$$
\overline\rho(a'',b'')=
\sum_{(a''),(b'')} (a'')'\, S\rho\big((b'')'',(a'')''\big)\, (b'')'.
$$
Therefore
\begin{eqnarray*}
&&\double{a}{b}^{\overline \rho} \\
&=& \sum_{(a), (b),  ( a'' S\rho(b''',a''') b'' )} b'
S\left( \big( a'' S\rho\big(b''',a'''\big) b'' \big)' \right)
\, a' \otimes \left( a'' S\rho\big(b''',a'''\big) b'' \right)''\\
&=& \sum_{\substack{(a), (b),  (a''),(b'') \\(S\rho(b''',a''')) }} b'
S\left((a'')'   \big(S\rho(b''',a''')\big)'  (b'')'  \right)
\, a' \otimes (a'')''  \big(S\rho(b''',a''')\big)''   (b'')''\\
&=& \sum_{(a), (b), (S\rho(b'''',a'''')) } b'
S\left(  a'' \big(S\rho(b'''',a'''')\big)'   b'' \right)
\, a' \otimes a'''  \big(S\rho(b'''',a'''')\big)''b'''  \\
&=& \sum_{(a), (b), (S\rho(b'''',a'''')) } b'
S(b'')  S\left( \big(S\rho(b'''',a'''')\big)'  \right) S (a'')
\, a' \otimes  a''' \big(S\rho(b'''',a'''')\big)''  b''' \\
&=& \sum_{(a), (b), (S\rho(b''',a''')) } \varepsilon(b')  S\left( \big(S\rho(b''',a''')\big)'  \right) \varepsilon (a')
 \otimes  a'' \big(S\rho(b''',a''')\big)''  b'' \\
&=& \sum_{(a), (b), (S\rho(b''',a''')) }   S\left( \big(S\rho(b''',a''')\big)'  \right)
 \otimes  \varepsilon (a') a'' \big(S\rho(b''',a''')\big)''  \varepsilon(b') b'' \\
&=& \sum_{(a), (b), (S\rho(b'',a'')) }   S\left( \big(S\rho(b'',a'')\big)'  \right)
 \otimes  a' \big(S\rho(b'',a'')\big)''  b' \\
&=& \sum_{(a), (b), (\rho(b'',a'')) } S^2\left( \big(\rho(b'',a'')\big)''  \right)
 \otimes  a' S\left(\big(\rho(b'',a'')\big)'\right)  b'
\quad  = \quad P_{21}(\double{b}{a}^\rho).
\end{eqnarray*}
If $\rho$ is skew-symmetric, then
$$
\double{-}{-}^{\rho} = - \double{-}{-}^{\overline \rho}
 \stackrel{\hbox{\scriptsize (\ref{transpose_transpose})}}{=} -  P_{21} \double{-}{-}^\rho P_{21}.
$$
Now, Lemma \ref{derivation_sv} implies that $\rho$ is a double
bracket.
\end{proof}

\subsection{Computation for   inner  Fox pairings}
The following lemma computes $ \double{-}{-}^{\rho}: A \otimes A \to
A \otimes A$ for the  inner F-pairings $\rho$ in $A$.

\begin{lemma}\label{db_tp}
If $A$ is involutive, then for any $a,b,e\in A$,
$$
\double{a}{b}^{\rho_e} =
\sum_{(e)} \Big(  S(e') \otimes a e''b   + b S\!\left(e'\right)  a \otimes e''
-b S(e') \otimes a e'' -S(e')  a \otimes e'' b\Big).
$$
\end{lemma}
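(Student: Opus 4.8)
The plan is to compute $\double{a}{b}^{\rho_e}$ directly from the defining formula
$$
\double{a}{b}^{\rho} =
\sum_{(a), (b), (\rho(a'',b''))} b'
S\!\left(\big(\rho( a'', b'')\big)'\right)  a' \otimes \big(\rho( a'', b'')\big)''
$$
by substituting the explicit inner pairing $\rho_e(x,y)=(x-\varepsilon(x)1)\,e\,(y-\varepsilon(y)1)$. The key preliminary step is to evaluate $\rho_e(a'',b'')$ and its comultiplication. Since $e$ is fixed, I would write $\rho_e(a'',b'') = (a''-\varepsilon(a'')1)\,e\,(b''-\varepsilon(b'')1)$, and because $\Delta$ is an algebra homomorphism, $\Delta\big(\rho_e(a'',b'')\big)$ factors as a product of the comultiplications of the three factors. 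Concretely, $\big(\rho_e(a'',b'')\big)'\otimes\big(\rho_e(a'',b'')\big)'' = \sum (a''-\varepsilon(a'')1)'e'(b''-\varepsilon(b'')1)' \otimes (a''-\varepsilon(a'')1)''e''(b''-\varepsilon(b'')1)''$.

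Next I would expand the two factors $(a''-\varepsilon(a'')1)$ and $(b''-\varepsilon(b'')1)$ using $\Delta(a'')=\sum a''\otimes a'''$ (and the coassociativity notation from Section~\ref{kd3}), keeping track of the four terms that arise from the two subtractions. The four-term structure of the final answer — with the summands $S(e')\otimes ae''b$, $bS(e')a\otimes e''$, $-bS(e')\otimes ae''$, and $-S(e')a\otimes e''b$ — should emerge precisely from the four choices of whether each of $a''$ and $b''$ contributes its ``genuine'' part or its counit correction. After substituting, I would repeatedly apply the Hopf algebra axioms: the counit axiom $\sum a'\varepsilon(a'')=a=\sum\varepsilon(a')a''$ to collapse spurious summations, and the antipode antihomomorphism property $S(xy)=S(y)S(x)$ together with $\sum S(x')x''=\varepsilon(x)1$ to simplify the expressions of the form $S\big((\cdots)'\big)$. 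Involutivity, $S^2=\id$, will be used to remove any doubled antipodes, exactly as in the proof of Lemma~\ref{double_bracket}.

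The main obstacle will be the bookkeeping: the outer comultiplication $(\Delta\otimes\id)\Delta$ applied to a triple product generates a large number of Sweedler components, and one must organize the resulting sum so that the antipode and counit identities telescope cleanly into just four surviving terms. The delicate point is handling the factor $S\!\left(\big(\rho_e(a'',b'')\big)'\right)=S\big((a''\text{-part})'\,e'\,(b''\text{-part})'\big)$, which by the antihomomorphism property reverses the order to $S\big((b''\text{-part})'\big)\,S(e')\,S\big((a''\text{-part})'\big)$; then the outer factors $b'$ on the left and $a'$ on the right must be matched against these antipodes via $\sum b'S(b'')=\varepsilon(b)1$ and $\sum S(a')a''=\varepsilon(a)1$ to produce the clean appearances of $a$ and $b$ in the stated formula. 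I expect that once this matching is set up correctly, each of the four sign-and-correction cases reduces to one of the four displayed terms with no residual summation over $(a)$ or $(b)$ surviving, only the sum $\sum_{(e)}$ over the comultiplication of $e$ remaining.
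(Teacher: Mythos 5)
Your plan is exactly the paper's proof: substitute $\rho_e$ into the defining formula, expand $\Delta(\rho_e(a'',b''))$ multiplicatively into four terms according to the two counit corrections, and collapse each term via the antihomomorphism property of $S$ and the antipode/counit axioms, leaving only the sum over $\Delta(e)$. One small caution for the write-up: the collapse on the $a$-side produces $\sum S(a'')a'$ rather than $\sum S(a')a''$, and the identity $\sum S(a'')a'=\varepsilon(a)1$ is precisely where involutivity enters, so your appeal to $S^2=\id$ is needed there rather than for removing doubled antipodes.
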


\begin{proof} By definition,
\begin{eqnarray*}
\double{a}{b}^{\rho_e} &=&
\sum_{(a), (b), (\rho_e(a'',b''))} b'
S\!\left(\big(\rho_e( a'', b'')\big)'\right)  a' \otimes \big(\rho_e( a'', b'')\big)''.
\end{eqnarray*}
We have
\begin{eqnarray*}
 && \Delta(\rho_e( a'', b''))=\Delta\big((a'' -\varepsilon(a'')1\big) \Delta(e) \Delta\big(b'' -\varepsilon(b'')1\big)\\
&=& \sum_{(a''),(e),(b'')} (a'')'e'(b'')' \otimes (a'')''e''(b'')''  + \sum_{(e)} \varepsilon(a'')\varepsilon(b'') e'\otimes e''\\
&& -  \sum_{(a''),(e)} \varepsilon(b'') (a'')' e'  \otimes (a'')''e'' -  \sum_{(e),(b'')} \varepsilon(a'') e'(b'')' \otimes e''(b'')'' .
\end{eqnarray*}
Thus,
$$
\double{a}{b}^{\rho_e} = X_{ab}+X-X_a-X_b
$$ where
\begin{eqnarray*}
X_{ab} & = &
\sum_{(a), (b), (a''),(e),(b'')} b'
S\!\left((a'')'e'(b'')'\right)  a' \otimes (a'')''e''(b'')''  \\
& = &
\sum_{(a), (b), (e)} b'
S\!\left(a'' e' b'' \right)  a' \otimes a'''e''b'''  \\
& = &
\sum_{(a), (b), (e)} b' S(b'') S(e') S(a'') a' \otimes a'''e''b'''  \\
& = &
\sum_{(a), (b), (e)} \varepsilon(b') S(e') \varepsilon(a') \otimes a''e''b''
\quad = \quad
\sum_{(e)} S(e')  \otimes a e''b,
\end{eqnarray*}
\begin{eqnarray*}
X_{a} & = &
\sum_{(a), (b), (a''),(e)} \varepsilon(b'') b'
S\!\left((a'')'e' \right)  a' \otimes (a'')''e''  \\
& = & \sum_{(a) ,(e)} b
S\!\left(a'' e' \right)  a' \otimes a''' e''  \\
& = & \sum_{(a) ,(e)} b S(e') S(a'') a' \otimes a''' e''  \\
& = & \sum_{(a) ,(e)} b S(e') \varepsilon(a') \otimes a'' e''
\quad  =  \quad \sum_{(e)} b S(e') \otimes a e'' ,
\end{eqnarray*}
\begin{eqnarray*}
X_{b} & = &
\sum_{(a), (b), (e),(b'')} b'
S\!\left( e'(b'')'\right)  \varepsilon(a'')a' \otimes e''(b'')''  \\
& = &
\sum_{(b), (e)} b'
S\!\left( e' b''\right)  a \otimes e'' b'''  \\
& = &
\sum_{ (b), (e)} b' S(b'') S(e')  a \otimes e'' b'''  \\
& = &
\sum_{ (b), (e)}  \varepsilon(b') S(e')  a \otimes e''b''
\quad  = \quad
\sum_{ (e)} S(e')  a \otimes e'' b
\end{eqnarray*}
and
$$
X =  \sum_{(a), (b), (e)} \varepsilon(b'')b'
S\!\left(e'\right)  \varepsilon(a'')a' \otimes e''
= \sum_{(e)} b  S\!\left(e'\right)  a \otimes e''.
$$

\vspace{-0.5cm}
\end{proof}

\subsection{Remarks}
1. If $\double{-}{-}$ is a double bracket in an augmented algebra $A=(A,\varepsilon)$, then the
map $ A \times A \to A$ carrying any pair $(a,b)\in A\times A$ to  $
(\varepsilon \otimes \id_A) \double{a}{b}$ is an F-pairing.
This defines a map, $\Phi$, from the set of double brackets in $A$
to the set of F-pairings in $A$. If $A$ is an involutive Hopf
algebra, then Lemma~\ref{double_bracket} defines a map, $\Psi$, from
the set of skew-symmetric F-pairings in $A$ to set of double
brackets in $A$. Clearly,   $\Phi  \Psi=\id$. Generally speaking,
the composition $\Psi \Phi  $ is not  defined.

2.  
As indicated to the authors by P. Schauenburg,
the fact that a right Fox derivative in a Hopf algebra $  A $ and an element of $  A$
determine a derivation $ A \to A \otimes A$ by  \eqref{delta} 
can be alternatively explained in terms of $A$-bimodules. The explanation is based on  the following two facts:  
1)   derivations $A \to M$ with values in an $A$-bimodule $M$
bijectively correspond to $A$-bimodule maps from the $A$-bimodule of differentials on $A$ to $M$, 
and 2) the Hopf algebra structure on $A$ allows to form the tensor product of $A$-bimodules.

\section{The homotopy intersection form of a surface}\label{PMT}

In this section, $\Sigma$ is an oriented surface with non-empty
boundary and a base point $* \in \partial  \Sigma$. We set $\pi =
\pi_1(\Sigma,*)$ and    consider the group algebra $A= \kk\pi$.

\subsection{The pairing $\eta$}\label{pairingeta}
We provide $ \partial \Sigma$ with the orientation induced by that
of $\Sigma$.  By paths and loops we shall mean piecewise-smooth
paths and loops in $\Sigma$. The product $\alpha \beta$ of two paths
$\alpha$ and $\beta$ is obtained   by running first along $\alpha$
and then along $\beta$. Given    two distinct simple (that is,
multiplicity 1) points $p,q$ on a path $\alpha$, we denote by
$\alpha_{pq}$ the path from $p$ to $q$   running along $\alpha$ in
the positive direction.

We shall  use a second base point $\bigdot \in \partial \Sigma
\setminus \{*\}$ lying ``slightly before'' $*$ on
 $\partial \Sigma$. We   fix an  embedded path $\bord_{\bigdot \ast}$
running from  $\bigdot$ to $\ast$ along $\partial \Sigma$ in the
positive direction, and we denote the inverse path by $\drob_{\ast
\bigdot}$. The element of $\pi$ represented by a loop $\alpha$ based
at $*$ is denoted $[\alpha]$. We say that  a loop $\alpha$  based at
$\bigdot$ \emph{represents}   $ [\drob_{\ast \bigdot} \alpha
\bord_{\bigdot \ast}] \in \pi$.

The \emph{homotopy intersection form} of $\Sigma$ is the
 bilinear  map $\eta: A \times A\to A$ defined, for any $a,b\in
\pi$, by
\begin{equation}\label{eta}
\eta(a,b) = \sum_{p\in \alpha \cap \beta}
\varepsilon_p(\alpha,\beta)  \,
\left[\drob_{*\bigdot }\alpha_{\bigdot p} \beta_{p *}\right].
\end{equation}
Here, $\alpha$ is a loop   based at $\bigdot$  and  representing
$a$,
 $\beta$ is a loop   based at $*$ and representing $b$ so that $\alpha$ and $\beta$ meet transversely in a
finite set $ \alpha \cap \beta$ of simple points of $\alpha, \beta$.
Each crossing  $p\in \alpha \cap \beta$ has a sign
$\varepsilon_p(\alpha,\beta)=\pm 1$ which is equal to $+1$ if  the
frame (the positive tangent vector of $\alpha$ at $p$, the positive
tangent vector of $\beta$ at $p$) is positively oriented and is
equal to $-1$ otherwise. It is easy to verify  that   $\eta$ is a
well-defined F-pairing: this is essentially the intersection pairing
introduced in \cite{Tu}, see also \cite{Pe} and \cite{MT}.
The pairing $\eta$   is  connected to Reidemeister's equivariant intersection pairings
and, in this   form,   it appears implicitly in \cite{Pa}.  

\subsection{The  pairing $\eta^s$} The group algebra $A=\kk\pi$ has a canonical structure of an involutive Hopf algebra.
The comultiplication $\Delta:A\to A\otimes A$, the counit $\varepsilon:A\to \kk$ and the antipode $S:A\to A$
 are defined by
$$\Delta(a)=a\otimes a, \quad \varepsilon (a)=1 , \quad S(a)=a^{-1}
\quad {\rm {for \, \, \, any}} \quad  a\in \pi\subset A.$$   By
Lemma \ref{transpose+}, the transpose $\overline\eta:A\times
A\to A$ of $\eta$ is given by $ \overline\eta (a,b)=a S(\eta (b,a))
b $ for   $a,b\in \pi$. It is easy to check (see \cite{Tu},
\cite{MT}) that $\eta + \overline\eta = -\rho_{1}$ where $\rho_{1}$
is the inner  F-pairing in $A$ associated with $1\in A$.
The F-pairing $$\eta^s = \eta -
 \overline\eta  = 2\eta + \rho_{1}:A\times A\to A $$ is skew-symmetric. By
definition, for any $a,b\in A$,
\begin{equation}\label{eta^s}
\eta^s(a,b) = 2 \eta(a,b) + \big(a-\varepsilon(a)1\big)\big(b-\varepsilon(b)1\big)
\, .
\end{equation}
Let $\double{-}{-}^s=\double{-}{-}^{\eta^s}$ be the double bracket
on $A$  determined by   $\eta^s$.  Lemma~\ref{db_tp} and formula
(\ref{eta^s}) give, for all $a,b\in A$,
\begin{equation}\label{db_hif}
\double{a}{b}^s = 2 \double{a}{b}^\eta +
  1 \otimes a b   + b   a \otimes 1    -   a \otimes  b -b  \otimes a.
\end{equation}
For $a,b\in \pi$, the term $\double{a}{b}^\eta$ can be explicitly
computed as follows.

\begin{lemma}\label{db_eta}   For any $a,b\in \pi$,
\begin{equation}\label{db_hif+}
\double{a}{b}^\eta =
 \sum_{p\in \alpha \cap \beta} \varepsilon_p(\alpha,\beta)\
\left[\beta_{*p}   \alpha_{p \bigdot }   \bord_{\bigdot \ast}\right]
\otimes \left[\drob_{*\bigdot }\alpha_{\bigdot p} \beta_{p *}\right]
\end{equation}
where $\alpha$ is a loop  based at $\bigdot$   and representing $a$,
$\beta$ is a loop   based at $*$ and representing $b$ such that
$\alpha$ and $\beta$ meet transversely in a finite set of simple
points.
\end{lemma}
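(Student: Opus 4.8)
The plan is to evaluate the defining formula for $\double{-}{-}^{\eta}$ from Section~\ref{FFPTDB} directly on a pair of group elements $a,b\in\pi\subset A$. The decisive simplification is that every element of $\pi$ is group-like for the Hopf structure on $A=\kk\pi$, so that $\Delta(a)=a\otimes a$, $\varepsilon(a)=1$ and $S(a)=a^{-1}$. Consequently all Sweedler summations over $(a)$ and $(b)$ in the definition of $\double{a}{b}^{\rho}$ collapse, with $a'=a''=a$ and $b'=b''=b$, leaving
$$
\double{a}{b}^{\eta}=\sum_{(\eta(a,b))} b\,S\!\big((\eta(a,b))'\big)\,a\otimes(\eta(a,b))''.
$$

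Next I would feed in the explicit value of $\eta(a,b)$ from \eqref{eta}. Writing $g_p=[\drob_{*\bigdot}\alpha_{\bigdot p}\beta_{p*}]\in\pi$ for the contribution of a crossing $p\in\alpha\cap\beta$, each $g_p$ is again group-like, so $\Delta(g_p)=g_p\otimes g_p$ and $S(g_p)=g_p^{-1}$. Since comultiplication is linear, the remaining Sweedler sum over $(\eta(a,b))$ turns into a sum over crossings and gives
$$
\double{a}{b}^{\eta}=\sum_{p\in\alpha\cap\beta}\varepsilon_p(\alpha,\beta)\,\big(b\,g_p^{-1}\,a\big)\otimes g_p.
$$
The second tensor factor is already $g_p=[\drob_{*\bigdot}\alpha_{\bigdot p}\beta_{p*}]$, which is exactly the right-hand factor in \eqref{db_hif+}, so it remains only to identify the first factor $b\,g_p^{-1}\,a$ with $[\beta_{*p}\alpha_{p\bigdot}\bord_{\bigdot*}]$.

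This identification is the heart of the matter. Fixing a crossing $p$, which is a simple point of both loops, I would split $\alpha=\alpha_{\bigdot p}\alpha_{p\bigdot}$ and $\beta=\beta_{*p}\beta_{p*}$ at $p$, so that $b=[\beta_{*p}\beta_{p*}]$ and $a=[\drob_{*\bigdot}\alpha_{\bigdot p}\alpha_{p\bigdot}\bord_{\bigdot*}]$. Reversing the defining path of $g_p$ yields $g_p^{-1}=[\beta_{p*}^{-1}\alpha_{\bigdot p}^{-1}\bord_{\bigdot*}]$, where the exponents $-1$ denote the \emph{same} arcs traversed backwards. Concatenating the three loops for $b$, $g_p^{-1}$, and $a$ in this order under the ``first-then'' convention for the product in $\pi$, three null-homotopic blocks appear, namely $\beta_{p*}\beta_{p*}^{-1}$, then $\bord_{\bigdot*}\drob_{*\bigdot}$, and finally $\alpha_{\bigdot p}^{-1}\alpha_{\bigdot p}$; after cancelling them what survives is precisely $\beta_{*p}\alpha_{p\bigdot}\bord_{\bigdot*}$, giving $b\,g_p^{-1}\,a=[\beta_{*p}\alpha_{p\bigdot}\bord_{\bigdot*}]$ and hence \eqref{db_hif+}.

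I expect the only genuine obstacle to be this final bookkeeping of arcs and orientations: one must take care not to confuse the reversed arc $\beta_{p*}^{-1}$ with the complementary arc $\beta_{*p}$, nor $\alpha_{\bigdot p}^{-1}$ with $\alpha_{p\bigdot}$, and one must check that the three cancellations line up as claimed. By contrast, the orientation sign $\varepsilon_p(\alpha,\beta)$ requires no attention, as it enters $\eta$ linearly and is untouched by the Hopf-algebraic manipulations; likewise, the presence of further crossings of $\alpha$ and $\beta$ on the arcs $\beta_{*p}$ or $\alpha_{p\bigdot}$ is irrelevant, since the computation for a given $p$ treats these arcs as single paths.
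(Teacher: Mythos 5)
Your proposal is correct and follows essentially the same route as the paper's proof: collapse the Sweedler sums using the group-likeness of $a$, $b$, and each $g_p=[\drob_{*\bigdot}\alpha_{\bigdot p}\beta_{p*}]$, then simplify $b\,S(g_p)\,a$ by the three path cancellations you describe. The paper writes the reversed arcs as $(\beta^{-1})_{*p}$ and $(\alpha^{-1})_{p\bigdot}$ rather than $\beta_{p*}^{-1}$ and $\alpha_{\bigdot p}^{-1}$, but this is only a notational difference.
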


\begin{proof}
Since $a$ and $b$ are group-like elements of $A$, we have
\begin{eqnarray}\label{compdouble}
\double{a}{b}^\eta &=&  \sum_{(\eta(a,b))} b
S\!\left(\eta( a, b)'\right)  a \otimes \eta( a, b)''.
\end{eqnarray}
Applying (\ref{eta}) we obtain that
\begin{eqnarray*}
\double{a}{b}^\eta &=&  \sum_{p\in \alpha \cap \beta} \varepsilon_p(\alpha,\beta)\
bS\!\left(\left[\drob_{*\bigdot }\alpha_{\bigdot p} \beta_{p *}\right]\right)  a
\otimes \left[\drob_{*\bigdot }\alpha_{\bigdot p} \beta_{p *}\right]\\
&=&  \sum_{p\in \alpha \cap \beta} \varepsilon_p(\alpha,\beta)\
[\beta] \left[(\beta^{-1})_{*p} ( \alpha^{-1})_{p \bigdot } \bord_{\bigdot *} \right]
 [\drob_{\ast \bigdot} \alpha \bord_{\bigdot \ast}]
\otimes \left[\drob_{*\bigdot }\alpha_{\bigdot p} \beta_{p *}\right]\\
&=&  \sum_{p\in \alpha \cap \beta} \varepsilon_p(\alpha,\beta)\
\left[\beta_{*p}   (\alpha^{-1})_{p \bigdot }  \alpha \bord_{\bigdot \ast}\right]
\otimes \left[\drob_{*\bigdot }\alpha_{\bigdot p} \beta_{p *}\right]\\
&=&  \sum_{p\in \alpha \cap \beta} \varepsilon_p(\alpha,\beta)\
\left[\beta_{*p}   \alpha_{p \bigdot }   \bord_{\bigdot \ast}\right]
\otimes \left[\drob_{*\bigdot }\alpha_{\bigdot p} \beta_{p *}\right].
\end{eqnarray*}

\vspace{-0.5cm}
\end{proof}

\begin{lemma}\label{homotopy triple bracket} The double bracket $\double{-}{-}^s$   is quasi-Poisson.
\end{lemma}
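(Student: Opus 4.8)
The plan is to verify the defining condition \eqref{quasi-Poisson-} head-on: I would compute the triple bracket $\triple{-}{-}{-}$ associated with $\double{-}{-}^s$ through \eqref{triple} and show it equals $\triple{-}{-}{-}_E$ as computed in \eqref{compE}. Both are trilinear maps $A^{\otimes 3}\to A^{\otimes 3}$ and $A=\kk\pi$ is spanned by $\pi$, so it is enough to treat the case $a,b,c\in\pi$. I would fix loops $\alpha,\beta,\gamma$ representing $a,b,c$, pairwise transverse and with only double points, and --- since each evaluation of $\double{-}{-}^\eta$ via Lemma~\ref{db_eta} requires its first argument to be based at $\bigdot$ and its second at $\ast$ --- allow myself to slide base points along the loops as needed when passing between the three cyclic terms.

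Since $\rho\mapsto\double{-}{-}^\rho$ is linear and $\eta^s=2\eta+\rho_1$, formula \eqref{db_hif} splits $\double{-}{-}^s=2\double{-}{-}^\eta+\double{-}{-}^{\rho_1}$, the second summand being the inner bracket of Lemma~\ref{db_tp} with $e=1$. As \eqref{triple} is bilinear in the double bracket, $\triple{-}{-}{-}$ then breaks into four blocks indexed by which of the two double-bracket slots is geometric and which is inner: one geometric--geometric block (coefficient $4$), two mixed blocks (coefficient $2$ each), and one inner--inner block. The inner--inner and mixed blocks involve only the explicit formula of Lemma~\ref{db_tp} and the group-like coproduct $\Delta(a)=a\otimes a$; evaluating them is routine Hopf-algebraic bookkeeping, and I expect them, after the cyclic summation $\sum_{i=0}^2 P_{312}^i(\cdots)P_{312}^{-i}$, to reassemble precisely into the eight terms of \eqref{compE}.

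The heart of the matter, and the step I expect to be the main obstacle, is the geometric--geometric block. Writing $\double{b}{c}^\eta$ as a sum over crossings $p\in\beta\cap\gamma$ by \eqref{db_hif+} and then applying $\double{a}{-}^\eta$ to its first tensor factor (a loop based at $\ast$) produces a double sum over $p$ and over crossings $q$ of $\alpha$ with the composite path appearing there; the cyclic terms add two further such double sums. The crucial point is that the right-hand side \eqref{compE} carries no intersection signs, so every contribution involving a genuine interior crossing $q$ must cancel. I would establish this by pairing, across the three cyclic permutations, the contributions of each configuration of two crossings: a $q$ lying in the interior of a $\beta$- or $\gamma$-subarc of a composite path occurs in two of the cyclic terms with identical path labels but opposite signs, so such terms cancel in pairs, reminiscent of the sign cancellations underlying the invariance of $\eta$.

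What survives the cancellation are the boundary contributions, i.e.\ the endpoint corrections created when the Fox-derivation rules \eqref{equ1}--\eqref{equ2} for $\eta$ differentiate across the junctions $\ast$ and $\bigdot$ of the composite paths. Here I would use the identity $\eta+\overline\eta=-\rho_1$ to rewrite these leftover terms as inner contributions, so that they combine with the mixed and inner blocks. A final term-by-term comparison with \eqref{compE} then yields $\triple{a}{b}{c}=\triple{a}{b}{c}_E$ for all $a,b,c\in\pi$, proving that $\double{-}{-}^s$ is quasi-Poisson. The delicate bookkeeping of signs and base points in the geometric--geometric block is where essentially all the surface topology is used, and is the part requiring the most care.
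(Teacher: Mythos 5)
Your plan is correct and follows essentially the same route as the paper's proof: reduce to $a,b,c\in\pi$, split $\double{-}{-}^s=2\double{-}{-}^\eta+\double{-}{-}^{\rho_1}$ into geometric, mixed, and inner blocks, convert leftover terms via $\eta+\overline\eta=-\rho_1$, and handle the geometric--geometric block by pairwise cancellation of the double sums over interior crossings, with the base-point--switching corrections supplying exactly the residual mixed terms. The paper implements the base-point bookkeeping by introducing a third base point $\circ$ and comparing loops $\alpha',\beta'$ with their four extra crossings, which is the precise form of your ``slide the base points'' step.
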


\begin{proof} In the proof we will use the following notation suggested in \cite{VdB}.
 Given a    map $\double{-}{-}: A^{\otimes 2}   \to A^{\otimes 2}$
and
  $u \in A$, $v\in A^{\otimes 2}$, we set
$$
\double{u}{v}_L = \double{u}{v^{(1)}} \otimes v^{(2)}, \quad
\double{u}{v}_R = v^{(1)} \otimes \double{u}{v^{(2)}},
$$
$$
\double{v }{u}_L = \double{v^{(1)}}{u} \otimes v^{(2)}, \quad
\double{v }{u}_R = v^{(1)} \otimes \double{v^{(2)}}{u}.
$$

Let  $\triple{-}{-}{-}^s$ be the   triple bracket in $A$ determined
by   $\double{-}{-}^s$. We must prove that for all $a,b,c \in A$,
\begin{eqnarray}
\label{tb_hif} \quad\quad \triple{a}{b}{c}^s &=&
a \otimes 1
\otimes bc + 1 \otimes ab \otimes c
   +   ca\otimes b \otimes 1
+   c \otimes a \otimes b \\
\notag && -  1  \otimes a \otimes bc            -
a\otimes b \otimes c -  ca \otimes 1 \otimes b -   c \otimes ab
\otimes 1.
\end{eqnarray}
It is enough to consider   $a,b,c\in \pi$. Using the
  skew-symmetry of $\double{-}{-}^s$, we
obtain
\begin{eqnarray*}
\triple{a}{b}{c}^s &=& \double{a}{\double{b}{c}^s}^s_L
+ P_{312}^{-1}\double{c}{\double{a}{b}^s}^s_L + {P_{312}}\double{b}{\double{c}{a}^s}^s_L\\
&=&  \double{a}{\double{b}{c}^s}^s_L  - P_{312}^{-1} P_{213}\double{\double{a}{b}^s}{c}^s_L
-  {P_{312}}\double{b}{{P_{21}}\double{a}{c}^s}^s_L\\
&=&  \double{a}{\double{b}{c}^s}^s_L  -  P_{132}\double{\double{a}{b}^s}{c}^s_L
- \double{b}{\double{a}{c}^s}^s_R.
\end{eqnarray*}
We now compute   the three resulting  terms. Firstly, we have
$$
\double{a}{\double{b}{c}^s}_L^s
= 2\double{a}{{\double{b}{c}^\eta}}^s_L
+ \double{a}{   1 \otimes  bc   + c   b\otimes 1  -c  \otimes b  -   b \otimes  c }^s_L,
$$
where
\begin{eqnarray}
\label{1a} \quad \quad \quad  2\double{a}{\double{b}{c}^\eta}_L^s &=&
4 \double{a}{\double{b}{c}^\eta}_L^\eta + 2 \otimes a \double{b}{c}^\eta -   2 a \otimes \double{b}{c}^\eta \\
  \notag  &&  +  2 P_{132}(\double{b}{c}^\eta * a \otimes 1) -  2 P_{213}(a \otimes \double{b}{c}^\eta)
\end{eqnarray}
and
\begin{eqnarray}
\label{1b} &&\double{a}{   1 \otimes  bc   + c   b\otimes 1  -c  \otimes b  -   b \otimes  c }^s_L \\
\notag&=&   c \double{a}{b}^s\otimes 1 +   \double{a}{c}^sb\otimes 1
 -    \double{a}{c}^s\otimes b -    \double{a}{b}^s\otimes c\\
\notag&=&  2 c \double{a}{b}^\eta\otimes 1 +  2 \double{a}{c}^\eta b\otimes 1
 -   2 \double{a}{c}^\eta \otimes b -  2  \double{a}{b}^\eta \otimes c\\
\notag&& +   c \otimes ab \otimes 1 +   c ba \otimes 1 \otimes 1
-  ca\otimes b \otimes 1 -   cb \otimes a \otimes 1\\
\notag&&  +  1 \otimes acb \otimes 1
+    ca \otimes b \otimes 1 -  a \otimes cb \otimes 1  -  c \otimes ab \otimes 1\\
\notag&&  -1\otimes ac \otimes b -  ca \otimes 1 \otimes b
+   a \otimes c \otimes b +   c \otimes a \otimes b \\
\notag&&  - 1 \otimes ab \otimes c -  ba \otimes 1 \otimes c
+   a \otimes b \otimes c +   b \otimes a \otimes c.
\end{eqnarray}

Secondly, we have
\begin{eqnarray*}
&&-  P_{132}\double{\double{a}{b}^s}{c}^s_L\\
&=&  - 2 P_{132}\double{\double{a}{b}^\eta }{c}^s_L
 -  P_{132}\double{  1 \otimes a b   + b   a \otimes 1  -b  \otimes a  -   a \otimes  b }{c}^s_L
\end{eqnarray*}
where
\begin{eqnarray}
\label{2a} && - 2 P_{132}\double{\double{a}{b}^\eta }{c}^s_L\\
\notag&=&  - 4 P_{132}\double{\double{a}{b}^\eta }{c}^\eta_L
- {2} P_{132}  P_{132}  (c \double{a}{b}^\eta \otimes 1)
- {2} P_{132} (1\otimes \double{a}{b}^\eta *c)\\
\notag&&+ {2} P_{132} P_{132}( \double{a}{b}^\eta \otimes c)
+ {2} P_{132} (c \otimes \double{a}{b}^\eta)\\
\notag&=&  - 4 P_{132}\double{\double{a}{b}^\eta }{c}^\eta_L
- {2}  c \double{a}{b}^\eta \otimes 1
- {2} P_{132} (1\otimes \double{a}{b}^\eta *c)\\
\notag&&+ {2}  \double{a}{b}^\eta \otimes c
+ {2} P_{132} (c \otimes \double{a}{b}^\eta)
\end{eqnarray}
and
\begin{eqnarray}
\label{2b}&&  -  P_{132}\double{  1 \otimes a b   + b   a \otimes 1  -b  \otimes a  -   a \otimes  b}{c}^s_L\\
\notag &=& -   P_{132}(b*\double{a}{c}^s \otimes 1) -   P_{132}(\double{b}{c}^s*a \otimes 1) \\
\notag&& +   P_{132}(\double{b}{c}^s \otimes a) +   P_{132}(\double{a}{c}^s \otimes b)\\
\notag&=& -  {2} P_{132}(b*\double{a}{c}^\eta \otimes 1)
-   {2} P_{132}(\double{b}{c}^\eta*a \otimes 1) \\
\notag&& +  {2} P_{132}(\double{b}{c}^\eta \otimes a) +  {2} P_{132}(\double{a}{c}^\eta \otimes b)\\
\notag&& - 1 \otimes 1 \otimes bac -    ca \otimes 1 \otimes b
+   a \otimes 1 \otimes bc  +   c \otimes 1 \otimes ba \\
\notag&&  -    cba \otimes 1 \otimes 1 -    a \otimes 1 \otimes bc
+    ba \otimes 1 \otimes c  +     ca \otimes 1 \otimes b\\
\notag&&      +   cb\otimes a \otimes 1   + 1  \otimes a \otimes bc
-  b\otimes a \otimes c   -   c \otimes a \otimes b \\
\notag&&    +   ca\otimes b \otimes 1    + 1  \otimes b \otimes ac
 -  a\otimes b \otimes c   -   c \otimes b \otimes a.
\end{eqnarray}

Thirdly, we have
$$
- \double{b}{\double{a}{c}^s}^s_R = - 2\double{b}{\double{a}{c}^\eta}^s_R
-\double{b}{1\otimes ac+ca \otimes 1 - c\otimes a - a \otimes c}^s_R
$$
where   \begin{eqnarray}
 \label{3a}  \quad \quad \quad - 2 \double{b}{\double{a}{c}^\eta}^s_R\! \! &= &\! \! - 4\double{b}{\double{a}{c}^\eta}^\eta_R
- {2} \double{a}{c}^\eta b \otimes 1  +  {2} \double{a}{c}^\eta \otimes b \\
 \notag &&- {2}P_{132}( b*\double{a}{c}^\eta\otimes 1) + {2} P_{132}(\double{a}{c}^\eta \otimes b)
\end{eqnarray}
  and
\begin{eqnarray}
\label{3b}&&-\double{b}{1\otimes ac+ca \otimes 1 - c\otimes a - a \otimes c}^s_R\\
\notag&=& - 1  \otimes a\double{b}{c}^s -    1 \otimes \double{b}{a}^sc
+    c \otimes \double{b}{a}^s +   a \otimes \double{b}{c}^s \\
\notag&=&  -  {2} \otimes a\double{b}{c}^\eta
 - {2}  \otimes \double{b}{a}^\eta c
+  {2}  c \otimes \double{b}{a}^\eta +  {2}  a \otimes \double{b}{c}^\eta\\
\notag&&- 1  \otimes a \otimes bc
- 1  \otimes acb \otimes 1+  1 \otimes ab \otimes c +  1 \otimes ac \otimes b \\
\notag&&  - 1 \otimes ab \otimes c
- 1 \otimes 1 \otimes bac +  1 \otimes b \otimes ac +  1 \otimes a \otimes bc\\
\notag&& +   c \otimes ab \otimes 1 +
   c \otimes 1 \otimes ba -  c \otimes b \otimes a -  c \otimes a \otimes b\\
\notag&& +   a \otimes cb \otimes 1
+   a \otimes 1 \otimes bc -    a \otimes b \otimes c -   a \otimes c \otimes b.
\end{eqnarray} Summing up the equalities (\ref{1a}) -- (\ref{3b}) and canceling
identical terms with opposite signs,   we obtain that
$\triple{a}{b}{c}^s$ is equal to the following sum:
\begin{eqnarray*}
\label{abc} &&  4 \double{a}{\double{b}{c}^\eta}_L^\eta
 -  4 P_{132}\double{\double{a}{b}^\eta }{c}^\eta_L - 4 \double{b}{\double{a}{c}^\eta}^\eta_R \\
 \notag&&
 -  4 P_{132}(b*\double{a}{c}^\eta \otimes 1)  +  4 P_{132}(\double{a}{c}^\eta \otimes b)
- {2} P_{132} (1\otimes \double{a}{b}^\eta *c) \\
 \notag&&  - {2}  \otimes \double{b}{a}^\eta c
+ {2} P_{132} (c \otimes \double{a}{b}^\eta) +  {2}  c \otimes \double{b}{a}^\eta  \\
\notag&&  - {2} \otimes1 \otimes bac +  {2} c \otimes 1 \otimes ba
+  {2}  \otimes b \otimes ac   -  {2} c \otimes b \otimes a\\
\notag&&  -  ca \otimes 1 \otimes b   +   ca\otimes b \otimes 1  - 1 \otimes ab \otimes c
 +   1 \otimes a \otimes bc   \\
\notag&& +    a \otimes 1 \otimes bc   +   c \otimes ab \otimes 1
 -   c \otimes a \otimes b    -   a\otimes b \otimes c.
\end{eqnarray*}
Next, we deduce from  the equality  $\eta + \overline\eta =
-\rho_{1}$ and   \eqref{transpose_transpose} that
$$
-{P_{21}}\double{a}{b}^\eta = \double{b}{a}^\eta +  ab\otimes  1 + 1\otimes ba  -a \otimes b -b \otimes a.
$$
Hence
\begin{eqnarray*}
&& - {2} P_{132} (1\otimes \double{a}{b}^\eta *c)  \ = \ -  {2} \otimes {P_{21}}(\double{a}{b}^\eta) c \\
&=&  {2} \otimes \double{b}{a}^\eta c +   {2}\otimes  ab\otimes  c
+  {2}\otimes 1  \otimes ba c
- {2} \otimes a \otimes bc - {2} \otimes b \otimes ac
\end{eqnarray*}
and
\begin{eqnarray*}
&&  {2} P_{132} (c \otimes \double{a}{b}^\eta)   \ = \ 2 c \otimes P_{21}( \double{a}{b}^\eta)   \\
&=& - {2} c\otimes \double{b}{a}^\eta - {2}   c\otimes ab\otimes  1
-  {2} c\otimes 1\otimes ba  +  {2}c\otimes a \otimes b +  {2} c\otimes b \otimes a.
\end{eqnarray*}
Thus,  the expression for $\triple{a}{b}{c}^s$ above
simplifies to
\begin{eqnarray*}
 \quad\quad\quad  \triple{a}{b}{c}^s &=&   4\double{a}{\double{b}{c}^\eta}_L^\eta
 -  4P_{132}\double{\double{a}{b}^\eta }{c}^\eta_L - 4\double{b}{\double{a}{c}^\eta}^\eta_R\\
\notag&& - 4 P_{132}(b*\double{a}{c}^\eta \otimes 1)  +  4 P_{132}(\double{a}{c}^\eta \otimes b)\\
\notag&& +a \otimes 1
\otimes bc + 1 \otimes ab \otimes c
   +   ca\otimes b \otimes 1
+   c \otimes a \otimes b \\
\notag && -  1  \otimes a \otimes bc            -
a\otimes b \otimes c -  ca \otimes 1 \otimes b -   c \otimes ab
\otimes 1 .
\end{eqnarray*}
To finish the proof of \eqref{tb_hif}, it suffices to show that
\begin{eqnarray}
\label{abc_bisb} && \double{a}{\double{b}{c}^\eta}_L^\eta
 -   P_{132}\double{\double{a}{b}^\eta }{c}^\eta_L -  \double{b}{\double{a}{c}^\eta}^\eta_R\\
\notag &=&     P_{132}(b*\double{a}{c}^\eta \otimes 1)  - P_{132}(\double{a}{c}^\eta \otimes b).
\end{eqnarray}

To proceed,   we fix a third base point $\circ\in \partial \Sigma$
which we assume to be ``slightly before'' $\bigdot$ on $\partial
\Sigma$. Consider  a loop $\alpha$ based at $\circ$ representing $a$
(i.e$.$ $[\drob_{\ast \circ} \alpha \bord_{ \circ\ast}] =a \in
\pi$), a loop $\beta$ based at $\bigdot$ representing $b$, and a
loop $\gamma$ based at $\ast$ representing $c$; we   assume that
$\alpha$, $\beta$, $\gamma$ meet transversely in a finite set of
simple points and that $\alpha \cap \beta \cap \gamma =
\varnothing$. The first term in \eqref{abc_bisb} is
\begin{eqnarray*}
\label{1'} && \double{a}{\double{b}{c}^\eta}_L^\eta   \ =  \ \sum_{p\in \beta \cap \gamma} \varepsilon_p(\beta,\gamma)\ \double{a}{
\left[\gamma_{*p}   \beta_{p \bigdot }   \bord_{\bigdot \ast}\right]
\otimes \left[\drob_{*\bigdot }\beta_{\bigdot p} \gamma_{p *}\right]}_L^\eta\\
\notag  &=&\sum_{p\in \beta \cap \gamma} \varepsilon_p(\beta,\gamma)\ \double{a}{
\left[\gamma_{*p}   \beta_{p \bigdot }   \bord_{\bigdot \ast}\right] }^\eta\otimes \left[\drob_{*\bigdot }\beta_{\bigdot p} \gamma_{p *}\right]\\
\notag &=&\sum_{\substack{p\in \beta \cap \gamma\\q\in \alpha \cap \gamma_{*p}}}
\varepsilon_p(\beta,\gamma) \varepsilon_q(\alpha,\gamma)
[\gamma_{*q}\alpha_{q\circ} \bord_{\circ \ast}]
\otimes [\drob_{\ast \circ} \alpha_{\circ q} \gamma_{qp} \beta_{p\bigdot} \bord_{\bigdot \ast}]
\otimes \left[\drob_{*\bigdot }\beta_{\bigdot p} \gamma_{p *}\right]\\
\notag && + \sum_{\substack{p\in \beta \cap \gamma\\ q\in \alpha \cap \beta_{p\bigdot}}}
\varepsilon_p(\beta,\gamma) \varepsilon_q(\alpha,\beta)
[\gamma_{*p}\beta_{pq} \alpha_{q\circ} \bord_{\circ \ast}]
\otimes [\drob_{\ast \circ} \alpha_{\circ q} \beta_{q\bigdot} \bord_{\bigdot \ast}]
\otimes \left[\drob_{*\bigdot }\beta_{\bigdot p} \gamma_{p *}\right].
\end{eqnarray*}
The second term in \eqref{abc_bisb} is
\begin{eqnarray*}
\label{2'}  && \quad -P_{132} \double{\double{a}{b}^\eta }{c}^\eta_L\\
&=& -P_{132}  \sum_{p\in \alpha \cap \beta} \varepsilon_p(\alpha,\beta)
\double{
\left[\drob_{\ast \bigdot} \beta_{\bigdot p}   \alpha_{p \circ}   \bord_{\circ \ast}\right]
\otimes \left[\drob_{*\circ }\alpha_{\circ p} \beta_{p \bigdot} \bord_{\bigdot \ast}\right]}{c}^\eta_L\\
\notag &=&  -P_{132} \sum_{p\in \alpha \cap \beta} \varepsilon_p(\alpha,\beta)
\double{\left[\drob_{\ast \bigdot} \beta_{\bigdot p}    \alpha_{p \circ}   \bord_{\circ \ast}\right] }{c}^\eta
\otimes \left[\drob_{*\circ }\alpha_{\circ p} \beta_{p \bigdot} \bord_{\bigdot \ast}\right]\\
\notag &=&  -P_{132} \sum_{\substack{p\in \alpha \cap \beta\\q\in \beta_{\bigdot p} \cap\gamma}}
\varepsilon_p(\alpha,\beta)\varepsilon_q(\beta,\gamma)
[\gamma_{\ast q} \beta_{qp} \alpha_{p\circ} \bord_{\circ \ast}]
\otimes [\drob_{\ast \bigdot} \beta_{\bigdot q} \gamma_{q\ast}]
\otimes \left[\drob_{*\circ }\alpha_{\circ p} \beta_{p \bigdot} \bord_{\bigdot \ast}\right]\\
\notag &&  -P_{132} \sum_{\substack{p\in \alpha \cap \beta\\q\in \alpha_{p \circ } \cap\gamma}}
\varepsilon_p(\alpha,\beta)\varepsilon_q(\alpha,\gamma)
[\gamma_{\ast q}  \alpha_{q\circ} \bord_{\circ \ast}]
\otimes [\drob_{\ast \bigdot} \beta_{\bigdot p}  \alpha_{pq} \gamma_{q\ast}]
\otimes \left[\drob_{*\circ }\alpha_{\circ p}  \beta_{p \bigdot} \bord_{\bigdot \ast}\right]\\
\notag &=& - \sum_{\substack{p\in \alpha \cap \beta\\q\in \beta_{\bigdot p} \cap\gamma}}
\varepsilon_p(\alpha,\beta)\varepsilon_q(\beta,\gamma)
[\gamma_{\ast q} \beta_{qp} \alpha_{p\circ} \bord_{\circ \ast}]
\otimes \left[\drob_{*\circ }\alpha_{\circ p}  \beta_{p \bigdot} \bord_{\bigdot \ast}\right]
\otimes [\drob_{\ast \bigdot} \beta_{\bigdot q} \gamma_{q\ast}] \\
\notag &&  - \sum_{\substack{p\in \alpha \cap \beta\\q\in \alpha_{p \circ } \cap\gamma}}
\varepsilon_p(\alpha,\beta)\varepsilon_q(\alpha,\gamma)
[\gamma_{\ast q}  \alpha_{q\circ} \bord_{\circ \ast}]
\otimes  \left[\drob_{*\circ }\alpha_{\circ p}  \beta_{p \bigdot} \bord_{\bigdot \ast}\right]
\otimes [\drob_{\ast \bigdot} \beta_{\bigdot p}  \alpha_{pq} \gamma_{q\ast}].
\end{eqnarray*}
The third term in \eqref{abc_bisb} is
\begin{eqnarray*}
 -\double{b}{\double{a}{c}^\eta}_R^\eta
&=& -\sum_{p\in \alpha \cap \gamma} \varepsilon_p(\alpha,\gamma)
\double{b}{\left[\gamma_{\ast p}   \alpha_{p \circ}   \bord_{\circ \ast}\right]
\otimes \left[\drob_{*\circ }\alpha_{\circ p} \gamma_{p \ast} \right]}^\eta_R\\
&=& - \sum_{p\in \alpha \cap \gamma} \varepsilon_p(\alpha,\gamma)
\left[\gamma_{\ast p}   \alpha_{p \circ}   \bord_{\circ \ast}\right]
\otimes \double{b}{\left[\drob_{*\circ }\alpha_{\circ p} \gamma_{p \ast} \right]}^\eta.
\end{eqnarray*}
To finish the computation of the third term, we denote by $\alpha'$
a loop based at $\bigdot$ and $\beta'$ a loop based at $\circ$
obtained from $\alpha$ and $\beta$ respectively by ``switching''
their base points along $\partial \Sigma$: thus, $\alpha' \cap
\beta'$ is  $\alpha \cap \beta$ with four extra points. We obtain
\begin{eqnarray*}
\label{3'} && \quad -\double{b}{\double{a}{c}^\eta}_R^\eta \ = \ - \sum_{p\in \alpha' \cap \gamma} \varepsilon_p(\alpha',\gamma)
\left[\gamma_{\ast p}   \alpha_{p \bigdot}'   \bord_{\bigdot \ast}\right]
\otimes \double{b}{\left[\drob_{*\bigdot }\alpha'_{\bigdot p} \gamma_{p \ast} \right]}^\eta\\
\notag &=& - \sum_{\substack{p\in \alpha' \cap \gamma\\q\in \beta' \cap \gamma_{p\ast}}}
\varepsilon_p(\alpha',\gamma) \varepsilon_q(\beta',\gamma)
\left[\gamma_{\ast p}   \alpha_{p \bigdot}'   \bord_{\bigdot \ast}\right]
\otimes [\drob_{\ast \bigdot} \alpha'_{\bigdot p} \gamma_{p q}  \beta'_{q\circ} \bord_{\circ \ast}]
\otimes [\drob_{\ast \circ}\beta'_{\circ q} \gamma_{q\ast}]\\
\notag && - \sum_{\substack{p\in \alpha' \cap \gamma\\q\in \beta' \cap \alpha'_{\bullet p}}}
\varepsilon_p(\alpha',\gamma) \varepsilon_q(\beta',\alpha')
\left[\gamma_{\ast p}   \alpha_{p \bigdot}'   \bord_{\bigdot \ast}\right]
\otimes [\drob_{\ast \bigdot} \alpha'_{\bigdot q}\beta'_{q\circ} \bord_{\circ \ast}]
\otimes [\drob_{\ast \circ}\beta'_{\circ q} \alpha'_{qp} \gamma_{p\ast}]\\
\notag &=& - \sum_{\substack{p\in \alpha \cap \gamma\\q\in \beta \cap \gamma_{p\ast}}}
\varepsilon_p(\alpha,\gamma) \varepsilon_q(\beta,\gamma)
\left[\gamma_{\ast p}   \alpha_{p \circ}   \bord_{\circ\ast}\right]
\otimes [\drob_{\ast \circ} \alpha_{\circ p} \gamma_{p q}  \beta_{q\bigdot} \bord_{\bigdot \ast}]
\otimes [\drob_{\ast \bigdot}\beta_{\bigdot q} \gamma_{q\ast}]\\
\notag && - \sum_{\substack{p\in \alpha \cap \gamma\\q\in \beta \cap \alpha_{\circ p}}}
\varepsilon_p(\alpha,\gamma) \varepsilon_q(\beta,\alpha)
\left[\gamma_{\ast p}   \alpha_{p \circ}   \bord_{\circ \ast}\right]
\otimes [\drob_{\ast \circ} \alpha_{\circ q}\beta_{q\bigdot} \bord_{\bigdot \ast}]
\otimes [\drob_{\ast \bigdot}\beta_{\bigdot q} \alpha_{qp} \gamma_{p\ast}]\\
\notag && - \sum_{p\in \alpha \cap \gamma} \varepsilon_p(\alpha,\gamma)
\left[\gamma_{\ast p}   \alpha_{p \circ}   \bord_{\circ \ast}\right]  \otimes b
\otimes [\drob_{\ast \circ}\alpha_{\circ p} \gamma_{p\ast}]\\
\notag && + \sum_{p\in \alpha \cap \gamma} \varepsilon_p(\alpha,\gamma)
\left[\gamma_{\ast p}   \alpha_{p \circ}   \bord_{\circ \ast}\right]  \otimes 1
\otimes b [\drob_{\ast \circ}\alpha_{\circ p} \gamma_{p \ast}]\\
\notag &=& - \sum_{\substack{p\in \alpha \cap \gamma\\q\in \beta \cap \gamma_{p\ast}}}
\varepsilon_p(\alpha,\gamma) \varepsilon_q(\beta,\gamma)
\left[\gamma_{\ast p}   \alpha_{p \circ}   \bord_{\circ\ast}\right]
\otimes [\drob_{\ast \circ} \alpha_{\circ p} \gamma_{p q}  \beta_{q\bigdot} \bord_{\bigdot \ast}]
\otimes [\drob_{\ast \bigdot}\beta_{\bigdot q} \gamma_{q\ast}]\\
\notag && - \sum_{\substack{p\in \alpha \cap \gamma\\q\in \beta \cap \alpha_{\circ p}}}
\varepsilon_p(\alpha,\gamma) \varepsilon_q(\beta,\alpha)
\left[\gamma_{\ast p}   \alpha_{p \circ}   \bord_{\circ \ast}\right]
\otimes [\drob_{\ast \circ} \alpha_{\circ q}\beta_{q\bigdot} \bord_{\bigdot \ast}]
\otimes [\drob_{\ast \bigdot}\beta_{\bigdot q} \alpha_{qp} \gamma_{p\ast}]\\
\notag && -P_{132}(\double{a}{c}^\eta \otimes b) + P_{132} (b*\double{a}{c}^\eta \otimes 1).
\end{eqnarray*}
Summing up these expressions for the first three terms of
\eqref{abc_bisb}, and canceling pairwise the six sums indexed by
$(p,q)$, we obtain \eqref{abc_bisb} and \eqref{tb_hif}.

\vspace{-0.5cm}
\end{proof}

\subsection{Proof of Theorem \ref{CQPS}}\label{endofmain} By Lemma~\ref{double_bracket},
the skew-symmetrized homotopy intersection pairing $\eta^s$
induces a double bracket
  $\double{-}{-}^s$ on   $A=\kk\pi$. By   Lemma~\ref{homotopy triple bracket}, this double
bracket  is quasi-Poisson. For each $N\geq 1$, we endow the
$(G_N,\g_N)$-algebra $A_N$   with the
 bracket $\{ -, -\}$  derived from  $\double{-}{-}^s$  via Lemma~\ref{fdbtopb}.
 In the notation of  Lemma~\ref{db_eta},   this bracket is computed by
\begin{eqnarray}
\label{addedf}  \{a_{ij}, b_{uv}\}& =  &
2 \sum_{p\in \alpha \cap \beta}   \varepsilon_p(\alpha,\beta)\
\left[\beta_{*p}   \alpha_{p \bigdot }   \bord_{\bigdot \ast}\right]_{uj}
  \left[\drob_{*\bigdot }\alpha_{\bigdot p} \beta_{p *}\right]_{iv}\\
\notag&& + \, \delta_{uj} (ab)_{iv} +       \delta_{iv} (ba)_{uj} - a_{uj} b_{iv} - b_{uj} a_{iv}
\end{eqnarray}
  for all $a,b\in \pi$   and $i,j,u,v\in \{1,  \ldots, N\}$.
According to Lemma~\ref{qP}, this turns $A_N$ into a quasi-Poisson
algebra and proves the first claim of Theorem \ref{CQPS}.

 As explained in  Sections \ref{stdb}--\ref{qpdb}, the quasi-Poisson double bracket
  $\double{-}{-}^s$ in    $A$ induces a Lie bracket $\langle -,- \rangle$  in   $\check A=A/[A,A]$.
  Recall from Section~\ref{AMT--1} that $\check A= \kk\check \pi$ is the free $\kk$-module whose basis
$\check \pi$   can be identified with the set of free homotopy
classes of loops in $\Sigma$. Formulas
  \eqref{otherbracket}, \eqref{db_hif} and \eqref{db_hif+} imply
  that for any generic loops $\alpha, \beta$ in $\Sigma$,
  $$\langle \alpha, \beta\rangle= 2 \sum_{p\in \alpha \cap \beta} \varepsilon_p(\alpha,\beta)\  \alpha_p \beta_p$$
  where $\alpha_p\beta_p$ is the product of the loops $\alpha$ and
  $\beta$ based at $p$. The right-hand side of this formula is
    2 times the Goldman Lie bracket in $\check A$.
  Therefore the second claim of Theorem \ref{CQPS} follows directly
  from Lemma~\ref{comparis}.

\subsection{Remarks.}\label{Chas}

1. The bracket $\langle -, - \rangle: A \times A \to A$ associated
to the double bracket $\double{-}{-}^s$ by formula
\eqref{otherbracket} is twice the form $\sigma$ introduced by
Kawazumi and Kuno in \cite{KK}. An operation similar to
$\double{-}{-}^\eta:A \otimes A \to A \otimes A$ has been
independently introduced by Kawazumi and Kuno
\cite{KK_intersections} for other purposes.

2. When   $2=0 $ in $\kk$,   the quasi-Poisson double bracket in
$A=\kk \pi$ simplifies to
$$
\double{a}{b}^s = 1 \otimes ab + ba \otimes 1 + a \otimes b + b \otimes a
$$
  and  the quasi-Poisson bracket $\bracket{-}{-}$
on $A_N$ simplifies to
$$
 \bracket{a_{ij}}{b_{uv}} = \delta_{uj} (ab)_{iv} +     \delta_{iv}  (ba)_{uj}  + a_{uj} b_{iv} + b_{uj} a_{iv}.
$$
The latter  formula does not involve  topology of the surface
and   yields a    quasi-Poisson bracket on $A_N$
for any algebra $A$ (provided $2=0$ in $\kk$).

 3. Consider the case where   $\Sigma$ is a compact connected
oriented  surface of genus $g\geq 1$ with   $\partial \Sigma \cong
S^1$. For $\kk=\QQ$,  the F-pairing $\eta$ admits a tensorial
description  in terms of  a symplectic expansion of $\pi$, see
\cite{Mas}, \cite{MT}. In  a sequel to this paper, the authors will
derive a tensorial description of   the double bracket
$\double{-}{-}^s$ and use it to relate this bracket to the Poisson
double bracket associated by Van den Bergh \cite{VdB} to the quiver
having one vertex and $g$ edges. This will allow us to relate the
cobracket  of loops in $\Sigma$ introduced by the second-named
author to the cobracket of loops in   quivers introduced by T.\ Schedler.

4. A part of the results of this paper  has an analogue 
for oriented  smooth  manifolds with boundary of dimension $\geq 3 $. 
Given such a  manifold  $M$ with base point $\ast\in \partial M$,   
consider the space $\Omega=\Omega(M,\ast)$ of loops in $M$ based at $\ast$. 
Suppose that the ground ring $\kk$ is a field.
  Since $\Omega$  is an $H$-group,   its homology $A=H_\ast(\Omega;\kk)$ has a natural structure of a graded Hopf algebra. 
Imitating the formula \eqref{db_hif+} in the context of string topology of Chas and Sullivan  \cite{CS}, 
we define  a double bracket in $A$. This double bracket induces 
a  graded  Poisson    bracket 
in the associated commutative graded algebras $\{A_N\}_{N\geq 1}$.  
One can   view $A_N$ as the ``coordinate algebra" 
of the  ``scheme''   of graded algebra homomorphisms $A\to \Mat_N(B)$
 where $B$ runs over all commutative graded algebras. 
If $M$ is simply connected and   $\kk$ is a field of characteristic zero, 
then by the Milnor--Moore theorem,  $A$ is the universal enveloping algebra 
of the graded Lie algebra $\oplus_{i\geq 2} \, \pi_i(M,\ast)  \otimes \kk  $   
with Whitehead product in the role of the graded  Lie bracket.  
Thus,   $A_N$ is the  ``coordinate algebra''  of the   ``scheme''  of Lie algebra homomorphisms 
from $\oplus_{i\geq 2} \,  \pi_i(M,\ast) \otimes \kk $ to $\Mat_N(B)$ 
where $B$ runs over all commutative graded algebras.  
These   results will be discussed in detail in a sequel to this paper.

5. The constructions of this paper can be generalized to the case of surfaces with several
marked points on the boundary. This generalization involves representations of the fundamental groupoid
of the surface based at those points. We plan to study this general case  in another place.

  \section{Quasi-Poisson structures on the representation
manifolds}\label{QPMRS}

In this section we construct a quasi-Poisson structure on the
representation manifold associated with a   compact oriented surface
with boundary. We begin by  recalling the notion of a quasi-Poisson
manifold following \cite{AKsM}. Throughout this section  we assume
that   $\kk=\RR$.

\subsection{Quasi-Poisson manifolds} \label{QPA4}
 We first fix  a few conventions. For a vector space  $V$ over $\RR$, we
denote by $\Lambda V$ the exterior algebra of $V$, i.e$.$, the
quotient of the tensor algebra $\oplus_{n\geq 0}\, V^{\otimes n} $
by the two-sided ideal generated by the set $\{v\otimes v\}_{v\in
V}$. As usual, multiplication in $\Lambda V=\oplus_{n\geq 0}\,
\Lambda^n V $ is denoted by the symbol $\wedge$. We define a linear
map $\iota: \Lambda V \to \oplus_{n\geq 0}\, V^{\otimes n}$  by
$$\iota( v_1\wedge \cdots \wedge v_n ) = \sum_{\sigma  } \varepsilon(\sigma)\, v_{\sigma(1)} \otimes \cdots
\otimes v_{\sigma(n)}$$ for any $v_1,..., v_n\in V$ with $n\geq 1$.
Here $\sigma$ runs over all  permutations of the set $\{1, ...,n\}$
and $ \varepsilon(\sigma)$ is the sign of  $\sigma$. The map $\iota$
is an isomorphism of $\Lambda V $ onto the subspace of $
\oplus_{n\geq 0}\, V^{\otimes n}$ consisting of all skew-symmetric
tensors.

 Consider a smooth manifold $M$ and let $T=T(M)$ be
its tangent bundle. Applying the functor $V\mapsto \Lambda^n V$
fiber-wise we obtain  a smooth vector bundle $\Lambda^n T $ over $M$
for all $n\geq 0$. Let $C^\infty(\Lambda^n T )$ be the vector space
of smooth sections of $\Lambda^n T $. The elements of
$C^\infty(\Lambda^n T )$ are called {\it $n$-vector fields} on~$M$.
The direct sum $C^\infty(\Lambda T )=\oplus_{n\geq 0}\,
C^\infty(\Lambda^n T )$ is a graded algebra with respect to the
pointwise exterior multiplication $\wedge$. Here $
C^\infty(\Lambda^0 T )= C^\infty(M)$ is the algebra of smooth
$\RR$-valued functions on $M$ and $ C^\infty(\Lambda^1 T )=
C^\infty(T)$ is the vector space of smooth tangent vector fields on $M$.

  The \emph{Schouten--Nijenhuis bracket} on $M$ is the
$\RR$-bilinear map $$[-,-]: C^\infty(\Lambda T) \times
C^\infty(\Lambda T) \to C^\infty(\Lambda T)$$ uniquely determined by
the following conditions (see, for example,  \cite{Mar}):
\begin{enumerate}
\item for any $f,g \in C^\infty(M )$,   we have   $[f,g]=0$;
\item for any $X\in  C^\infty(T)$ and  $U \in C^\infty(\Lambda T)$, the bracket    $[X,U]$ is the Lie derivative of   $U$
along the vector field $X$;
\item\label{iki} for any $U \in C^\infty(\Lambda^k T)$,  $V \in C^\infty(\Lambda^l T)$   with $k,l\geq 0$   and     $W \in C^\infty(\Lambda
T)$,
  $$[U,V]= -(-1)^{(k-1)(l-1)}[V,U],$$ $$[U,V\wedge W]=
[U,V]\wedge W + (-1)^{(k-1)l}V\wedge [U,W].$$
\end{enumerate}

Set   $\mathcal A= C^\infty(M)$ and let $T^*=T^*(M)$ be the
cotangent bundle of $M$.   Consider the   algebra of differential
forms on $M$
$$C^\infty(\Lambda  T^*)= \oplus_{n\geq 0}C^\infty(\Lambda^n T^*)$$
with the exterior multiplication $\wedge$. There is a natural
$\mathcal A$-bilinear pairing $\langle -,-\rangle$ from
$C^\infty(\Lambda  T^*)\times C^\infty(\Lambda T) $ to $\mathcal A$
which is uniquely determined by the condition that $\langle\,
C^\infty(\Lambda^n T^*) , C^\infty(\Lambda^k T  )\, \rangle=0$ for
  $n\neq k$ and $$ \langle \alpha_1 \wedge \cdots \wedge \alpha_k,
X_1 \wedge \cdots \wedge X_k  \rangle = \det\, \left(\langle
\alpha_i, X_j\rangle \right)_{i,j=1}^k
$$
for all    differential 1-forms $\alpha_1, \ldots , \alpha_k$ and  vector
fields $X_1, \ldots , X_k$ on $M$ with $k\geq 0$. We   use the pairing $\langle -,-\rangle$
to  associate with  each $k$-vector field  $U\in C^\infty(\Lambda^k T )$
 a skew-symmetric multiderivation $\{-,\cdots, -\}_U:\mathcal A^k \to \mathcal A$   by
$$
\{f_1,\dots,f_k\}_U  =\langle df_1 \wedge \cdots \wedge df_k,
U\rangle
$$
for any $f_1,..., f_k\in \mathcal A$.

Suppose now that $M$ is endowed with a smooth (left) action of a Lie
group $G$. This
 induces an action of $G$ on the algebra ${\mathcal A}=C^\infty (M)$ by
\begin{equation}\label{G_on_A}
 (gf)(m)=f(g^{-1}m)
\end{equation}
 for  $g\in G$,  $f\in {\mathcal A}$, $m\in M$.
The   action of $G$ on $M$
 also induces an action of the Lie algebra, $\g$, of $G$ on ${\mathcal A}$ by
\begin{equation}\label{g_on_A}
(vf)(m)= \Big. \frac{d}{dt} \Big\vert_{ t=0}f(e^{-tv}m)
\end{equation}
for $ v\in \g$, $f\in {\mathcal A}$, $m\in M$. It is easy to check
that ${\mathcal A}$ is a $(G,\g)$-algebra  in the sense of Section \ref{QPA3}.  

For any $v\in \g$,  the derivation \eqref{g_on_A} of ${\mathcal
A}$  corresponds to a smooth tangent vector field on $M$  denoted by
$v_M$  and said to be \emph{generated} by $v$.  Note
that $([v,w])_M= [v_M,w_M]$ for any $v,w\in \g$. The linear map $\g
\to C^{\infty}(T), v \mapsto v_M$ extends uniquely to an algebra
homomorphism $\Lambda \g \to C^{\infty}(\Lambda T)$. In this way,
any $k$-vector $\Phi\in \Lambda^k\g$ with $k\geq 0$ \emph{generates}
a $k$-vector field on $M$ denoted $\Phi_M$. The action
\eqref{G_on_A}  extends to an action of $G$ on
$C^{\infty}(\Lambda T)$   by $g U = \Lambda (d \gamma_g) \circ
U \circ \gamma_{g^{-1}}$, for all $U\in C^{\infty}(\Lambda T)$ and
any  $g\in G$ whose action on $M$ is denoted  here 
by $\gamma_g:M \to M$. It is easy to check that the map
$\Lambda \g \to C^{\infty}(\Lambda T), \Phi \mapsto \Phi_M$ is
$G$-equivariant.  In particular,  a multivector field on $M$
generated by a $G$-invariant  vector in $\Lambda \g$ is also $G$-invariant. 

Suppose finally that the Lie algebra  $\g$ is  endowed with a
$G$-invariant non-degenerate symmetric bilinear form~$ \cdot : \g
\times \g \to \RR$ and consider the associated  Cartan trivector $\phi \in \wedge^3_{\g}$,
defined at \eqref{Cartan}. 
(Recall that $\wedge^3_{\g}$ is the vector space of 
$\g$-invariant skew-symmetric elements of $\g^{\otimes 3}$.)
By a \emph{quasi-Poisson structure} on $M$ we shall mean a $G$-invariant
bivector field $P\in C^\infty (\Lambda^2 T )$ such that the
Schouten--Nijenhuis bracket $[P,P] \in C^\infty (\Lambda^3 T )$ is
equal to the  $G$-invariant trivector field $(\iota^{-1}(2\phi ))_M=
2 (\iota^{-1}( \phi ))_M$ on $M$   generated   by 
$\iota^{-1}(2\phi )=2 \iota^{-1}( \phi )\in \Lambda^3 \g$. We call
such a $P$ a {\it quasi-Poisson bivector field} on $M$.   Note that,
given dual bases $(e_i)_i$ and $(e^\sharp_i)_i$ of $\g$ (so that
$e_i \cdot e^\sharp_j =\delta_{ij}$ for all $i,j)$, we have
$$
\iota^{-1}(\phi ) = \iota^{-1}\Big(    \sum_{i,j,k} 
(e_i \cdot [e_j,e_k])\,  e^\sharp_i \otimes e^\sharp_j \otimes e^\sharp_k \Big)
= \frac{1}{6} \sum_{i,j,k} (e_i \cdot [e_j,e_k]) \, e^\sharp_i \wedge e^\sharp_j \wedge e^\sharp_k.
$$

The following lemma   \cite{AKsM} shows that a quasi-Poisson
structure on~$M$ induces   on $C^\infty (M)$ a structure of a quasi-Poisson algebra  
associated with $\phi$, as defined in Section \ref{QPA3}. 

\begin{lemma}
Let $M$ be a smooth manifold endowed with a smooth action of a Lie
group $G$, whose Lie algebra $\g$ is equipped with a $G$-invariant
non-degenerate symmetric bilinear form. Let $\phi  \in
\wedge^3_{\g}$ be the associated Cartan trivector and ${\mathcal
A}=C^\infty (M)$. Then, the pairing $\bracket{-}{-}_P:{\mathcal
A}\times {\mathcal A}\to {\mathcal A}$   determined by a
quasi-Poisson bivector field $P$ on $M$ as above is a quasi-Poisson
bracket   in ${\mathcal A}$ associated with $\phi$.
\end{lemma}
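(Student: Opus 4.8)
The plan is to verify, directly from the definition $\bracket{f}{g}_P = \langle df \wedge dg, P \rangle$, the five conditions that make $\bracket{-}{-}_P$ a quasi-Poisson bracket on $\mathcal A = C^\infty(M)$ associated with $\phi$: skew-symmetry, the Leibniz rule in each variable, the $G$-invariance \eqref{ssdM++}, the $\g$-equivariance \eqref{ssdM--}, and the modified Jacobi identity \eqref{ssdM}. The first two are immediate. Skew-symmetry follows from $df \wedge dg = -\, dg \wedge df$. For the Leibniz rule, the identity $d(gh) = g\, dh + h\, dg$ and the $\mathcal A$-bilinearity of the pairing give
$$
\bracket{f}{gh}_P = \langle df \wedge (g\, dh + h\, dg),\, P\rangle = g\, \bracket{f}{h}_P + h\, \bracket{f}{g}_P,
$$
so $\bracket{f}{-}_P$ is a derivation; skew-symmetry then makes $\bracket{-}{g}_P$ a derivation as well.

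The two invariance properties come from the $G$-invariance of $P$. For \eqref{ssdM++}, I would use that $g \in G$ acts on functions and on differential forms by pullback along $\gamma_{g^{-1}}$, that the pairing $\langle -, -\rangle$ is natural under the diffeomorphism $\gamma_g$, and that $gP = P$; chasing these through the definition yields $g\,\bracket{f}{h}_P = \bracket{gf}{gh}_P$. The infinitesimal identity \eqref{ssdM--} is cleanest to establish directly: $G$-invariance of $P$ gives $\mathcal{L}_{w_M} P = 0$ for all $w \in \g$, and since the Lie derivative along the vector field $w_M$ satisfies the Leibniz rule for $\langle -, -\rangle$ and commutes with $d$, one computes, using $w_M(f) = wf$,
\begin{align*}
w\, \bracket{f}{g}_P &= \mathcal{L}_{w_M} \langle df \wedge dg,\, P\rangle \\
&= \langle \mathcal{L}_{w_M}(df \wedge dg),\, P\rangle + \langle df \wedge dg,\, \mathcal{L}_{w_M} P\rangle \\
&= \langle d(wf) \wedge dg + df \wedge d(wg),\, P\rangle = \bracket{wf}{g}_P + \bracket{f}{wg}_P.
\end{align*}

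The core of the proof is the modified Jacobi identity, and this is where the normalization $[P,P] = 2\,(\iota^{-1}(\phi))_M$ in the definition of a quasi-Poisson bivector field is used. I would invoke the classical formula of Poisson geometry expressing the Jacobiator of a bivector bracket through the Schouten--Nijenhuis bracket: for any bivector field $P$,
$$
\bracket{f}{\bracket{g}{h}_P}_P + \bracket{g}{\bracket{h}{f}_P}_P + \bracket{h}{\bracket{f}{g}_P}_P = \frac{1}{2}\, \langle df \wedge dg \wedge dh,\, [P,P]\rangle.
$$
Substituting $[P,P] = 2\,(\iota^{-1}(\phi))_M$, the factor $\frac{1}{2}$ cancels the factor $2$, and the right-hand side becomes $\langle df \wedge dg \wedge dh,\, (\iota^{-1}(\phi))_M\rangle$. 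It then remains to identify this quantity with $\phi(f,g,h)$ from Section~\ref{QPA2}. Writing $\iota^{-1}(\phi) \in \Lambda^3\g$ as a sum of decomposable terms $v_1 \wedge v_2 \wedge v_3$, whose image under the generation map is $(v_1)_M \wedge (v_2)_M \wedge (v_3)_M$, and evaluating the pairing as a determinant, I get, with $(f_1,f_2,f_3)=(f,g,h)$,
$$
\langle df \wedge dg \wedge dh,\, (v_1)_M \wedge (v_2)_M \wedge (v_3)_M\rangle = \det\big( (v_j)_M(f_i) \big)_{i,j} = \sum_{\sigma} \varepsilon(\sigma)\, (v_{\sigma(1)} f)(v_{\sigma(2)} g)(v_{\sigma(3)} h).
$$
On the other hand, by the definition of $\iota$ the tensor $\phi = \iota(\iota^{-1}(\phi))$ equals $\sum_{\sigma} \varepsilon(\sigma)\, v_{\sigma(1)} \otimes v_{\sigma(2)} \otimes v_{\sigma(3)}$, and the evaluation map $w_1 \otimes w_2 \otimes w_3 \mapsto (w_1 f)(w_2 g)(w_3 h)$ of Section~\ref{QPA2} produces the same sum; by linearity this gives $\langle df \wedge dg \wedge dh,\, (\iota^{-1}(\phi))_M\rangle = \phi(f,g,h)$, completing \eqref{ssdM}.

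I expect the main obstacle to be the first displayed formula of the previous paragraph, namely pinning down the Jacobiator--Schouten identity with the correct sign and normalization under the conventions for the Schouten--Nijenhuis bracket and the pairing $\langle -, -\rangle$ adopted here; the cancellation of the second-order derivative terms in the cyclic sum and the bookkeeping that matches the geometric trivector $(\iota^{-1}(\phi))_M$ with the algebraic element $\phi(f,g,h)$ are where all the care is needed. The factor $2$ built into the definition of a quasi-Poisson bivector field is exactly what reconciles these two normalizations, so that the Jacobiator comes out to precisely $\phi(f,g,h)$ rather than a scalar multiple of it.
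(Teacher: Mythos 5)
Your proof is correct and follows essentially the same route as the paper: both reduce the modified Jacobi identity to the relation $\sum_{\mathrm{cyc}}\bracket{f}{\bracket{g}{h}_P}_P=\frac{1}{2}\langle df\wedge dg\wedge dh,[P,P]\rangle$ and then substitute $[P,P]=2\left(\iota^{-1}(\phi)\right)_M$. The only difference is that the paper derives this relation explicitly from Marle's interior-product identity $i_{[U,V]}=-\left[\left[i_V,d\right],i_U\right]$ rather than citing it as classical, whereas you additionally spell out the routine verifications (skew-symmetry, Leibniz, and the invariance identities \eqref{ssdM--} and \eqref{ssdM++}) and the identification of $\langle df\wedge dg\wedge dh,(\iota^{-1}(\phi))_M\rangle$ with $\phi(f,g,h)$, which the paper leaves implicit.
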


\begin{proof}
Recall that any  $k$-vector field $U\in C^\infty(\Lambda^k T )$
 determines an   $\mathcal A$-linear map \begin{equation*}\label{mapiU} i_U: C^\infty(\Lambda T^*)\to
C^\infty( \Lambda T^*)\end{equation*} of degree $-k$ called the {\it
right interior product by $U$}, see, for instance, \cite{Mar}. The
map $i_U$ is uniquely determined by the condition that $\langle i_U
\alpha, X \rangle= \langle \alpha, U \wedge X \rangle$ for all
$\alpha \in C^\infty( \Lambda^n T^*)$ with $n\geq k$ and all $X \in
C^\infty( \Lambda^{n-k} T )$. By \cite[Proposition 4.1]{Mar},
\begin{equation}\label{Koszul}
i_{[U,V]} = - \left[\left[i_V,d\right],i_U\right]
\end{equation}
for any   $U \in C^\infty( \Lambda^k T )$ and $V \in C^\infty(
\Lambda^l T )$ with $k,l\geq 0$. On the right-hand side of
\eqref{Koszul}, the bracket $[\varphi,\psi]$ of two graded
endomorphisms $\varphi$ and $\psi$ of $C^\infty(\Lambda T^*)$ stands
for the graded commutator $\varphi \psi-(-1)^{\deg(\varphi)\deg(\psi)}\psi\varphi$.  
By \eqref{Koszul}, we have for any
$f,g,h \in {\mathcal A}$,
\begin{eqnarray*}
\{f,g,h\}_{[P,P]}&=&  \langle df \wedge dg \wedge dh, [P,P] \rangle\\
&=&   i_{[P,P]}   (df \wedge dg \wedge dh)\\
&=& -[i_P d - d i_P,i_P](df \wedge dg \wedge dh)\\
&=& -(i_P di_P - d  i_P i_P -  i_P i_P d + i_Pd i_P  )    (df \wedge dg \wedge dh)\\
&=& -2 i_P di_P (df \wedge dg \wedge dh).
\end{eqnarray*}
Here we use that $d (df \wedge dg \wedge dh)=0$ and   that $i_P i_P$
decreases the degree by $4$ and therefore annihilates $df \wedge dg
\wedge dh$. Expanding  the determinant of a $(3\times 3)$-matrix
with respect to the last row, we obtain that for any vector field
$X$ on $M$,
$$
\langle df \wedge dg \wedge dh, P \wedge X \rangle
= \langle df\wedge dg , P\rangle \langle dh, X\rangle -  \langle df\wedge dh, P \rangle \langle dg, X\rangle
+ \langle dg \wedge dh, P\rangle \langle df, X\rangle
.$$
  Therefore  $$i_P(df \wedge dg \wedge dh)= \bracket{f}{g}_P dh +
\bracket{h}{f}_P dg  + \bracket{g}{h}_P df.$$ We deduce that
\begin{eqnarray}
\label{Schouten-Jacobi} \quad \quad \quad \{f,g,h\}_{[P,P]}
&=& -2 i_P\left (d \bracket{f}{g}_P \wedge dh + d\bracket{h}{f}_P\wedge dg  + d \bracket{g}{h}_P \wedge df \right)\\
\notag &=& 2 \left( \bracket{h}{\bracket{f}{g}_P}_P + \bracket{g}{\bracket{h}{f}_P}_P + \bracket{f}{\bracket{g}{h}_P}_P   \right).
\end{eqnarray}
The assumption $[P,P]=2(\iota^{-1}(\phi ))_M$ implies that
$\bracket{-}{-}_P$ is a quasi-Poisson bracket in ${\mathcal A}$
associated with $\phi $.
\end{proof}

  \subsection{A quasi-Poisson structure on $\mathcal H$}\label{Specs} Let   $\Sigma$
  be a compact  connected   oriented surface with non-empty boundary and base point $\ast\in \partial
  \Sigma$. Then the group $\pi=\pi_1(\Sigma, \ast)$ is free of   a
finite rank  $n\geq 0$. We fix an integer $N\geq 1$ and consider the
smooth manifold
$$
\mathcal H   =\Hom (\pi, G_N) \cong  \left(G_N\right)^n \quad {\rm {where}} \quad G_N=\GL_N(\RR).
$$
  The group $G_N$ acts on $\mathcal H$ by conjugations in
the obvious way.   The aim of this subsection is to construct a
natural  quasi-Poisson structure on $\mathcal H$.

Set $A=\RR\pi$ and recall the algebra $A_N$ from
Section~\ref{sectionalgebrasmaintheorem}. By Section \ref{AMT--1},
we have an evaluation map $\ev=\ev_{\RR} $ from $A_N$ to the algebra
$\Map(\mathcal H,\RR)$ of $\RR$-valued functions on $\mathcal H$. To
describe $\ev$ in concrete terms, pick a basis
  $\{x_u\}_{u=1}^n$   
of~$\pi$ and use it to  identify $\mathcal H = (G_N)^n$.
Consider the presentation of $A_N$ by generators and relations
discussed at the end of Section~\ref{AMT--1}. The map $\ev$ carries
the generator $x^u_{ij}$ (respectively, $y^u$) of $A_N$ to the
function $\mathcal H = (G_N)^n\to \RR$ sending a sequence of $n$
invertible matrices to the $(i,j)$-th entry of the $u$-th matrix
 (respectively, to the inverse of the determinant of the $u$-th matrix).
We can view $\mathcal H$ as the affine algebraic set defined by the equations
$\tilde y^u\det(\tilde x_{ij}^u)_{i,j}=1$ in the affine space $(\Mat_N(\RR) \times \RR)^n = \RR^{(N^2+1)n}$
with coordinates  $((\tilde x_{ij}^u)_{i,j},\tilde y^u)_{u}$.
The  coordinate algebra $\RR[\mathcal H]$ of $\mathcal H$ is the quotient of $\RR\big[((\tilde x_{ij}^u)_{i,j},\tilde y^u)_{u}\big]$
by the ideal generated by the polynomials $\tilde y^u\det(\tilde x_{ij}^u)_{i,j}-1$ for $u\in\{1,\dots,n\}$. 
Restricting polynomial functions $\RR^{(N^2+1)n} \to \RR$ to  $\mathcal H\subset \RR^{(N^2+1)n}$, 
we  identify $\RR[\mathcal H]$ with a subalgebra of $C^\infty (\mathcal H )\subset \Map(\mathcal H,\RR)$.
Then the   description of $\ev$ above shows that $\ev$  is an isomorphism from $A_N$ onto $\RR[\mathcal{H}]$.
Transporting along $\ev$ the quasi-Poisson bracket in $A_N$ provided by
Theorem~\ref{CQPS}   we obtain a skew-symmetric bracket $\{-,-\}$ in
$\RR[\mathcal H]$ which   is a derivation in each variable.

\begin{theor}\label{CQPShahaha}
There is a unique quasi-Poisson structure  on $\mathcal H$ such that
the associated bracket in   $C^\infty (\mathcal H )$ extends the
bracket $\{-,-\}$ in   $\RR[\mathcal H]$. This quasi-Poisson
structure is invariant under the action of $\Homeo(\Sigma, \ast)$
on~$ \mathcal H $.
\end{theor}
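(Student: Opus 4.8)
The plan is to exploit that $\mathcal H\cong (G_N)^n$ is an \emph{open} subset of $\Mat_N(\RR)^n=\RR^{N^2 n}$, so that the images $x^u_{ij}=\ev(x^u_{ij})$ of the generators of $A_N$ form a global coordinate system on $\mathcal H$ and their differentials $dx^u_{ij}$ form a global frame of $T^\ast\mathcal H$. A bivector field is then completely determined by its pairings with the $dx^u_{ij}\wedge dx^v_{kl}$, which are smooth functions. I would first \emph{define} $P$ to be the unique bivector field with $\langle dx^u_{ij}\wedge dx^v_{kl},P\rangle=\bracket{x^u_{ij}}{x^v_{kl}}$ for all indices; the right-hand sides lie in $\RR[\mathcal H]\subset C^\infty(\mathcal H)$, so $P$ is smooth. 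Uniqueness in the theorem is then immediate, since any quasi-Poisson structure whose bracket extends $\bracket{-}{-}$ must assign these same values to the coordinate functions, and a bivector field is pinned down by its pairings with the coframe $\{dx^u_{ij}\}$.

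Next I would check that the bracket $\bracket{-}{-}_P$ attached to $P$ restricts to $\bracket{-}{-}$ on $\RR[\mathcal H]$. Both are skew-symmetric biderivations of $\RR[\mathcal H]$ agreeing on the generators $x^u_{ij}$ by construction, so by the Leibniz rule it remains only to compare them on the generators $y^u=1/\det(x^u)$. Here I would differentiate the defining relation $y^u\det(x^u)=1$: since $\det(x^u)$ is a polynomial in the $x^u_{ij}$, the derivation property expresses $\bracket{y^u}{-}$ through the $\bracket{x^u_{kl}}{-}$, and the resulting coefficients turn out to be exactly the smooth partials $\partial y^u/\partial x^u_{kl}$. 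The identical expansion holds for $\bracket{-}{-}_P$, so the two brackets agree on all generators and hence on all of $\RR[\mathcal H]$.

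The substantive step is to show that $P$ is a \emph{quasi-Poisson bivector field}, i.e.\ that it is $G_N$-invariant and that $[P,P]=2(\iota^{-1}(\phi_N))_{\mathcal H}$. The idea is to transfer the algebraic identities \eqref{ssdM} and \eqref{ssdM++}, valid in the quasi-Poisson algebra $A_N$ of Theorem \ref{CQPS}, to geometric statements about $P$. For this I first record that $\ev$ is a $(G_N,\g_N)$-equivariant isomorphism onto $\RR[\mathcal H]$, where $\g_N$ acts on $C^\infty(\mathcal H)$ by \eqref{g_on_A}; this follows by differentiating the $G_N$-equivariance of $\ev$ (Sections \ref{AMT0++}, \ref{AMT--1}). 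The $G_N$-invariance of $P$ then follows because, by \eqref{ssdM++}, the bracket $(f,f')\mapsto g\,\bracket{g^{-1}f}{g^{-1}f'}_P$ agrees with $\bracket{-}{-}_P$ on coordinate functions, forcing $g\cdot P=P$ for all $g\in G_N$. For the Schouten identity I would invoke the computation \eqref{Schouten-Jacobi}, which rewrites $\langle df\wedge dg\wedge dh,[P,P]\rangle$ as twice the cyclic sum of iterated brackets, and compare it with $\langle df\wedge dg\wedge dh,2(\iota^{-1}(\phi_N))_{\mathcal H}\rangle=2\,\phi_N(f,g,h)$, the latter equality coming from the definitions of the generator map and of $\phi_N(f,g,h)$. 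Evaluating on coordinate functions $f,g,h$ and using \eqref{ssdM} shows that the trivector fields $[P,P]$ and $2(\iota^{-1}(\phi_N))_{\mathcal H}$ pair identically against the spanning triples $dx^u_{ij}\wedge dx^v_{kl}\wedge dx^w_{pq}$, hence coincide; by the lemma of Section~\ref{QPA4} the induced bracket $\bracket{-}{-}_P$ is then a quasi-Poisson bracket.

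I expect the main obstacle to be precisely this last translation: passing from the algebra-level modified Jacobi identity, which Theorem \ref{CQPS} only guarantees on the finitely many generators of $A_N$, to the global differential-geometric equation $[P,P]=2(\iota^{-1}(\phi_N))_{\mathcal H}$. The enabling facts are that the coordinate differentials span $T^\ast\mathcal H$ pointwise, so that a trivector field is determined by its pairings with coordinate triples, and that the algebraic and geometric actions of $\g_N$ match under $\ev$, so that the two notions of $\phi_N(f,g,h)$ coincide. Finally, for invariance under $\Homeo(\Sigma,\ast)$ I would argue by naturality: an orientation- and basepoint-preserving homeomorphism $\psi$ induces an automorphism of $A_N$ preserving $\bracket{-}{-}$ (the naturality asserted after Theorem \ref{CQPS}) together with a $G_N$-equivariant diffeomorphism $\Psi$ of $\mathcal H$; since $\Psi$ is $G_N$-equivariant it preserves each vector field $v_{\mathcal H}$ generated by $v\in\g_N$ and hence fixes $(\iota^{-1}(\phi_N))_{\mathcal H}$, so $\Psi_\ast P$ is again a $G_N$-invariant quasi-Poisson bivector field whose associated bracket extends $\bracket{-}{-}$, whence $\Psi_\ast P=P$ by the uniqueness already established.
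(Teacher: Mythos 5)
Your proposal is correct and follows essentially the same route as the paper: define $P$ by its pairings with the coordinate differentials $d\tilde x^u_{ij}$ (the paper phrases the uniqueness pointwise via $I_m/I_m^2$, but then verifies smoothness with exactly your coordinate expansion), transfer $G_N$-invariance and the modified Jacobi identity from the algebra $A_N$ using the $(G_N,\g_N)$-equivariance of $\ev$ together with formula \eqref{Schouten-Jacobi}, and deduce $\Homeo(\Sigma,\ast)$-invariance from naturality. Your explicit Leibniz-rule check on the generators $y^u$ is a detail the paper leaves implicit, but it changes nothing of substance.
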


\begin{proof}  We claim that there is a  unique   bivector field  $P\in C^\infty
( \Lambda^2 T{\mathcal H})$  such that $ \{f,g\}= \{f,g\}_P=
\langle df\wedge dg,P\rangle$ for  all $f,g\in \RR[\mathcal H]$.
Indeed, pick a point $m\in \mathcal H$ and let $T_m \mathcal H$ be
the tangent space of the smooth manifold $\mathcal H$ at $m$.
Consider the maximal ideal $I_m$ of $\RR[\mathcal H] $ consisting of
all $f \in \RR[\mathcal H]$ such that $f(m)=0$. Recall that at
any smooth point of an affine algebraic set,  the Zariski tangent
space may be identified with the tangent space of differential
topology. Specifically,   there is a non-degenerate bilinear form $
(I_m/I_m^2)\times T_m\mathcal H \to \RR$ defined by
$(f,v)\mapsto d_mf(v)$ where $f\in I_m$ and $v\in T_m \mathcal H$.
On the other hand, the map $(f,g)\mapsto \{f,g\} (m)$, where $f,g\in
I_m$, induces a skew-symmetric bilinear form $(I_m/I_m^2)\times
(I_m/I_m^2)  \to \RR$. Therefore there is a unique bivector $P_m\in
\Lambda^2 T_m\mathcal H$ such that $\{f,g\} (m)= \langle d_mf \wedge
d_mg, P_m\rangle$ for all $f,g\in I_m$. In fact, the latter formula
holds for all $f,g\in \RR[\mathcal H]$ because $\{1,-\}=\{-,1\}=0$.
The map $m\mapsto P_m$ defines a unique section $P$ of the bundle
$\Lambda^2 T{\mathcal H}$ over $\mathcal H$ such that $ \{f,g\} =
\langle df\wedge dg,P \rangle$ for all $f,g\in \RR[\mathcal H]$. It
remains to justify that  $P\in C^\infty (  \Lambda^2 T{\mathcal
H})$,   i.e., that $P$ is a smooth section. For this, we
consider the  functions $\tilde x^u_{ij}= \ev(x^u_{ij})\in
\RR[\mathcal H]$  as above. These functions form a local system of
coordinates in a neighborhood of any point of $  \mathcal H$,
and we can expand
$$
P = \frac{1}{2}  \sum_{u,v,i,j,k,l} \langle d\tilde x^u_{ij} \wedge d \tilde x^v_{kl}, P\rangle\
\frac{\partial \ }{\partial \tilde x^u_{ij}} \wedge \frac{\partial  \ }{\partial \tilde x^v_{kl}}
=  \frac{1}{2} \sum_{u,v,i,j,k,l}   \{{\tilde x}^u_{ij} , {\tilde x}^v_{kl}\} \
\frac{\partial \ }{\partial \tilde x^u_{ij}} \wedge \frac{\partial  \ }{\partial \tilde x^v_{kl}}\, .
$$
 Since the functions $  \{{\tilde x}^u_{ij} , {\tilde
x}^v_{kl}\} \in \RR[\mathcal H]  $ are smooth,       $P$ is a smooth
 bivector field   on $\mathcal H$ such that the  bracket
$\bracket{-}{-}_P$ in $C^\infty (\mathcal H )  $ extends the bracket
$\bracket{-}{-}$ in $\RR[\mathcal H] $.

We verify now that $P$ is $G_N$-invariant. Observe   that the action
\eqref{G_on_A} of $G_N$ on $C^\infty (\mathcal H )$ preserves the
subalgebra $\RR[\mathcal H] $. By Section \ref{AMT--1}, the
evaluation map $\ev$ from the $(G_N, \g_N)$-algebra $ A_N$ to
$C^\infty (\mathcal H )$ is $G_N$-equivariant. This   and the
$G_N$-invariance of the quasi-Poisson bracket in $A_N$ imply the
$G_N$-invariance of the bracket $\{-,-\}$ in $\RR[\mathcal H]$. The
latter implies the $G_N$-invariance of $P$.

To see that $P$ defines a quasi-Poisson structure on $\mathcal H$,
we need only to check the modified Jacobi identity.  Observe first
that the action \eqref{g_on_A} of $\g_N$ on $C^\infty (\mathcal H )$
preserves $\RR[\mathcal H] $ and the evaluation map $\ev: A_N \to
C^\infty (\mathcal H )$ is $\g_N$-equivariant.  It suffices to
check   the $\g_N$-equivariance of $\ev$  on each generator of $A_N$
of the form $x^u_{ij}$ with $u\in\{1,\dots,n\}$ and
$i,j\in\{1,\dots,N\}$. For any  $w\in \g_N$ and  any   point
$m=(m_v)_{v=1}^n \in \mathcal H = (G_N)^n$, we have
\begin{eqnarray*}
\left(w \ev(x_{ij}^u)\right)(m) &= & \left(w  \tilde x_{ij}^u\right)(m) \\
& \stackrel{\eqref{g_on_A}}{=} &
 \frac{d}{dt} \Big\vert_{ t=0}  \tilde x_{ij}^u (e^{-tw}m)\\
 &=&  \frac{d}{dt} \Big\vert_{ t=0}  \tilde x_{ij}^u (e^{-tw}m_1 e^{tw},\dots, e^{-tw}m_n e^{tw})\\
 &=& \frac{d}{dt} \Big\vert_{ t=0}   (e^{-tw}m_u e^{tw})_{i,j}\\
 & =& ( m_uw-wm_u)_{i,j} \\
 &=&  (m_u)_{i,s} w_{s,j} -w_{i,s} (m_u)_{s,j} \\
 &=& (\tilde x^u_{is} w_{s,j} -w_{i,s} \tilde x^u_{sj} )(m)
 \quad \stackrel{\eqref{gN_on_AN}}{=} \quad \ev (w x^u_{ij})(m).
\end{eqnarray*}
Since our   bracket in $A_N$  is a quasi-Poisson bracket  associated
with $\phi_N$, so is the bracket $\bracket{-}{-}$ in  $\RR[\mathcal
H]$. Thus, for any $f,g,h \in \RR[\mathcal H] $,
\begin{eqnarray*}
\langle df \wedge dg \wedge dh, [P,P] \rangle  &=& \{f,g,h\}_{[P,P]}\\
\notag  &\stackrel{\eqref{Schouten-Jacobi}}{=} &
 2\left( \bracket{f} {\bracket{g }{h}_P}_P  + \bracket{g}{\bracket{h }{f}_P}_P +  \bracket{h}{\bracket{f}{g}_P}_P \right)\\
\notag &=&  2\left( \bracket{f} {\bracket{g }{h}}  + \bracket{g}{\bracket{h }{f}} +  \bracket{h}{\bracket{f}{g}} \right)\\
\notag & =&  2 \phi_N(f,g,h) \\
\notag & =&   2 \left \langle df
\wedge dg \wedge dh, \left(\iota^{-1}(\phi_N )\right)_{\mathcal H} \right\rangle  
\end{eqnarray*}
  where
  $\left(\iota^{-1}(\phi_N )\right)_{\mathcal H}$   is  the  trivector field on
${\mathcal H} $ induced by $\iota^{-1}(\phi_N)\in \Lambda^3\g_N$.
Since the differentials of regular functions fill in the  cotangent
space,   $[P,P]=  2\left(\iota^{-1}(\phi_N)\right)_{\mathcal H}$. 

The invariance of $P$ under the action of $\Homeo(\Sigma, \ast)$
follows from the corresponding property of the quasi-Poisson bracket
in $A_N$ and the fact that the evaluation map  preserves this
action.
\end{proof}

\subsection{Computations} \label{compact}

We   give   explicit formulas for the quasi-Poisson brackets associated  
 with   a compact connected oriented surface  $\Sigma$ of genus $g\geq 0$ with $m+1\geq 1$ boundary components.
 Fix  a  basis $(p_1,q_1,\dots,p_g,q_g,z_1,\dots,z_m)$  of $\pi=\pi_1(\Sigma, \ast)$  as  shown on Figure \ref{surface}.   

\begin{figure}
\labellist \small \hair 2pt
\pinlabel {$p_1$} [bl] at 224 81
\pinlabel {$p_g$} [bl] at 477 81
\pinlabel {$q_1$} [bl] at 345 55
\pinlabel {$q_g$} [bl] at 554 45
\pinlabel {\Large $\circlearrowleft$} at 631 156
\pinlabel {$\cdots$} at 355 90
\pinlabel {\large $\ast$}  at 567 5
\pinlabel {$\cdots$} at 720 90
\pinlabel {$z_1$} [b] at 650 105
\pinlabel {$z_m$} [b] at 790 105
\endlabellist
\centering
\includegraphics[scale=0.4]{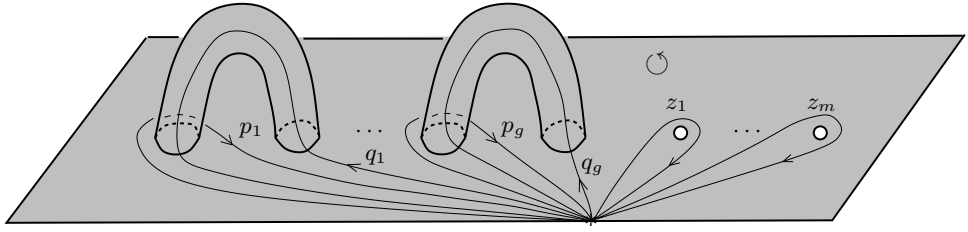}
\caption{A   surface $\Sigma$ and a basis of $\pi$.}
\label{surface}
\end{figure}

We first compute the double bracket $\double{-}{-}^s$ in $ \kk\pi$. 
For any $u,v\in \{1,\dots,m\}$ such that
$u<v$,  we have $\double{z_u}{z_v}^\eta=0$ so that
$$
  \double{z_u}{z_v}^s
= 1\otimes z_u z_v + z_v z_u \otimes 1 - z_u \otimes z_v - z_v \otimes z_u
$$
and, for any $u\in \{1,\dots,m\}$, we have $ \double{z_u}{z_u}^\eta =  z_u \otimes z_u -1 \otimes z_u^2$ so that
$$
\double{z_u}{z_u}^s =   z_u^2 \otimes 1 -1 \otimes z_u^2 .
$$
For any $u,v\in \{1,\dots,g\}$ with $u<v$ and   any    $a,b \in \{p,q\}$, we
have $\double{a_u}{b_v}^\eta=0$ so that
$$
\double{a_u}{b_v}^s
= 1\otimes a_u b_v + b_v a_u \otimes 1 - a_u \otimes b_v - b_v \otimes a_u.
$$
For any $u\in \{1,\dots,g\}$,  we have
$$\double{p_u}{p_u}^\eta= p_u \otimes p_u -1 \otimes p_u^2, \quad  \double{q_u}{q_u}^\eta= q_u
\otimes q_u -q_u^2 \otimes 1, \quad \double{p_u}{q_u}^\eta = q_u \otimes p_u$$   so that  
$$
\double{p_u}{p_u}^s =   p_u^2 \otimes 1 -1 \otimes p_u^2, \quad
\double{q_u}{q_u}^s =  1 \otimes q_u^2 - q_u^2 \otimes 1,
$$
and
$$
\double{p_u}{q_u}^s = 1\otimes p_u q_u + q_u p_u \otimes 1 - p_u \otimes q_u + q_u \otimes p_u.
$$ 
For any $u\in \{1,\dots,g\}$      and    $v\in\{1,\dots,m\}$,
we have $\double{p_u}{z_v}^\eta= 0$ so that
$$
\double{p_u}{z_v}^s
= 1\otimes p_u z_v + z_v p_u \otimes 1 - p_u \otimes z_v - z_v \otimes p_u.
$$
For any $u\in \{1,\dots,g\}$     and    $v\in\{1,\dots,m\}$,
we have $\double{q_u}{z_v}^\eta= 0$ so that
$$
\double{q_u}{z_v}^s
= 1\otimes q_u z_v + z_v q_u \otimes 1 - q_u \otimes z_v - z_v \otimes q_u.
$$

We now fix an $N\geq 1$.
By Section \ref{AMT--1}, given a   basis   $(x_1,x_2,\dots,x_{2g+m})$ of~$\pi$,  the quasi-Poisson bracket $\bracket{-}{-}$ in $(\kk\pi)_N$
produced by Theorem \ref{CQPS}  is determined by its values on the
generators $x^u_{ij}=(x_u)_{ij}$ with $u\in \{1,\dots,2g+m\}$ and $i,j\in \{1,\dots,N\}$. Applying this fact  to the chosen basis of $\pi$,
we deduce from the computations above  the following formulas which hold for arbitrary $i,j,k,l \in \{1,\dots,N\}$. For any $u,v\in\{1,\dots,m\}$ with $u<v$,
\begin{equation}\label{values_1}
\bracket{z_{ij}^u}{z_{kl}^v} =
\delta_{kj}  z^u_{ir} z^v_{rl} + z^v_{ks} z^u_{sj} \delta_{il} - z^u_{kj} z^v_{il} - z^v_{kj}  z^u_{il}.
\end{equation}
For any $u\in\{1,\dots,m\}$,
\begin{equation}\label{values_2}
\bracket{z_{ij}^u}{z_{kl}^u} = z^u_{kr} z^u_{rj} \delta_{il} - \delta_{kj}  z^u_{is} z^u_{sl}.
\end{equation}
For any $u,v\in \{1,\dots,g\}$ with
$u<v$ and  any $a,b\in\{p,q\}$,
\begin{equation}\label{values_one}
\bracket{a_{ij}^u}{b_{kl}^v} =
\delta_{kj}  a^u_{ir} b^v_{rl} + b^v_{ks} a^u_{sj} \delta_{il} - a^u_{kj} b^v_{il} - b^v_{kj}  a^u_{il}.
\end{equation}
For  any $u\in \{1,\dots,g\}$,
\begin{eqnarray}
\label{values_two}
\bracket{p_{ij}^u}{p_{kl}^u} &=& p^u_{kr} p^u_{rj} \delta_{il} - \delta_{kj}  p^u_{is} p^u_{sl};\\
\label{values_three}
\bracket{q_{ij}^u}{q_{kl}^u} &=&  \delta_{kj}  q^u_{is} q^u_{sl} - q^u_{kr} q^u_{rj} \delta_{il} ;\\
\label{values_four}
\bracket{p_{ij}^u}{q_{kl}^u} &=&
\delta_{kj}  p^u_{ir} q^u_{rl} + q^u_{ks} p^u_{sj} \delta_{il} - p^u_{kj} q^u_{il} + q^u_{kj}  p^u_{il}.
\end{eqnarray}
For any $u\in \{1,\dots,g\}$  and $v\in\{1,\dots,m\}$,
\begin{eqnarray}\label{values_mixed}
\bracket{p^u_{ij}}{z^v_{kl}} &=&
\delta_{kj}  p^u_{ir} z^v_{rl} + z^v_{ks} p^u_{sj} \delta_{il} - p^u_{kj} z^v_{il} - z^v_{kj}  p^u_{il};\\
\label{values_mixed_bis}
\bracket{q^u_{ij}}{z^v_{kl}} &=& \delta_{kj}  q^u_{ir} z^v_{rl} + z^v_{ks} q^u_{sj} \delta_{il} - q^u_{kj} z^v_{il} - z^v_{kj}  q^u_{il}.
\end{eqnarray}

For $\kk=\RR$, we can use a basis $(x_1,x_2,\dots,x_{2g+m})$ of $\pi$ to identify the
  space $\mathcal H= \Hom(\pi, \GL_N(\RR))$ with the
open subset $(\GL_N(\RR))^n$ of $(\Mat_N(\RR))^n=\RR^{nN^2}$. The  functions $( \tilde x^u_{ij})_{u,i,j}$  form a system of smooth
coordinates on $\mathcal H$ 
where  $u\in \{1,\dots,2g+m\}$,  $i,j\in \{1,\dots,N\}$, and $\tilde x^u_{ij} :\mathcal H \to \RR$ carries a
 tuple of $2g+n$ matrices to the $(i,j)$-th entry of the $u$-th matrix.
  In the coordinates derived in this way from the chosen basis of $\pi$,     the quasi-Poisson structure
on~$\mathcal{H}$ provided by Theorem \ref{CQPShahaha} is determined 
by the bivector field 
\begin{eqnarray*}
P &=&  \sum_{\substack{u<v\\ i,j,k,l}} \bracket{\tilde  z^u_{ij}}{\tilde  z^v_{kl}}
\frac{\partial \ }{\partial \tilde z^u_{ij}}  \wedge \frac{\partial \ }{\partial \tilde z^v_{kl}}
+ \frac{1}{2} \sum_{\substack{u,i,j,k,l}} \bracket{\tilde  z^u_{ij}}{\tilde  z^u_{kl}}
\frac{\partial \ }{\partial \tilde z^u_{ij}}  \wedge \frac{\partial \ }{\partial \tilde z^u_{kl}}\\
&&+ \sum_{\substack{u<v\\ i,j,k,l}}
\left( \bracket{\tilde  p^u_{ij}}{\tilde  p^v_{kl}} 
\frac{\partial \ }{\partial \tilde p^u_{ij}}  \wedge \frac{\partial \ }{\partial \tilde p^v_{kl}}+
\bracket{\tilde  q^u_{ij}}{\tilde  q^v_{kl}} \frac{\partial \ }{\partial \tilde q^u_{ij}}  \wedge \frac{\partial \ }{\partial \tilde q^v_{kl}}\right)\\
&&+ \sum_{\substack{u<v\\ i,j,k,l}}\left( \bracket{\tilde  p^u_{ij}}{\tilde  q^v_{kl}}
 \frac{\partial \ }{\partial \tilde p^u_{ij}}  \wedge \frac{\partial \ }{\partial \tilde q^v_{kl}}+
\bracket{\tilde  q^u_{ij}}{\tilde  p^v_{kl}} \frac{\partial \ }{\partial \tilde q^u_{ij}}  \wedge \frac{\partial \ }{\partial \tilde p^v_{kl}}\right)\\
&& +\frac{1}{2}  \sum_{\substack{u,i,j,k,l}}\left(    \bracket{\tilde  p^u_{ij}}{\tilde  p^u_{kl}}
\frac{\partial \ }{\partial \tilde p^u_{ij}}  \wedge \frac{\partial \ }{\partial \tilde p^u_{kl}}
+   \bracket{\tilde  q^u_{ij}}{\tilde  q^u_{kl}}
\frac{\partial \ }{\partial \tilde q^u_{ij}}  \wedge \frac{\partial \ }{\partial \tilde q^u_{kl}} \right)\\
&& +  \sum_{\substack{u,i,j,k,l}} \bracket{\tilde  p^u_{ij}}{\tilde  q^u_{kl}}
\frac{\partial \ }{\partial \tilde p^u_{ij}} \wedge \frac{\partial \ }{\partial \tilde q^u_{kl}}\\
&& + \sum_{\substack{u,v\\i,j,k,l}}\left( \bracket{\tilde  p^u_{ij}}{\tilde  z^v_{kl}}
\frac{\partial \ }{\partial \tilde p^u_{ij}}  \wedge \frac{\partial \ }{\partial \tilde z^v_{kl}} 
+ \bracket{\tilde  q^u_{ij}}{\tilde  z^v_{kl}}
\frac{\partial \ }{\partial \tilde q^u_{ij}}  \wedge \frac{\partial \ }{\partial \tilde z^v_{kl}} \right).
\end{eqnarray*} 
Here  the   brackets $\bracket{-}{-}$ are computed from
\eqref{values_1} -- \eqref{values_mixed_bis} by replacing all $p$'s, $q$'s and $z$'s 
with $\tilde p$'s, $\tilde q$'s and $\tilde z$'s respectively.

 \subsection{Remarks}\label{qP-manifolds}
 1. The restriction of the quasi-Poisson bracket $\{-,-\}$ in $C^\infty (\mathcal H)$ to
 the subalgebra  of
$G_N$-invariant elements $C^\infty (\mathcal H
 )^{G_N}$ is a Poisson bracket, and the map  $\ev\circ \tr:
\RR\check \pi \to \RR[\mathcal H]^{G_N} \subset C^\infty (\mathcal H )^{G_N}$   
carries the Goldman bracket in~$\RR\check
\pi$ into $(1/2) \{-,-\}$.    The restriction of $\{-,-\}$ to
$\RR[\mathcal H]^{G_N}$ can be uniquely recovered from the Goldman
bracket because the set  $\ev \left(\tr(  \check \pi)\right)$   generates the algebra
$\RR[\mathcal H]^{G_N}$.

2. Our definition of a quasi-Poisson structure on a manifold differs
from that in \cite{AKsM} by a factor of 2: a quasi-Poisson bivector
field   in our sense is $2$ times a quasi-Poisson bivector field  in
the sense of \cite{AKsM}.

3.  Under the assumptions of Section \ref{compact},  the isomorphism class of $(\mathcal H,P)$ (in the category of quasi-Poisson manifolds) does not depend on the choice of the base point $\ast \in \partial \Sigma$. This follows from the naturality of our bracket under surface homeomorphisms  and the fact that for any points $\ast, \ast' \in \partial \Sigma$, there is an orientation-preserving self-homeomorphism of $\Sigma$ carrying $\ast$ to $\ast'$.

\section{Moment maps  and surfaces without boundary}\label{TBE+}

We   discuss moment maps and, as an application,   associate certain
Poisson algebras with surfaces without boundary.

 \subsection{Moment maps}
Motivated by the notion of a multiplicative moment map for a
quasi-Poisson manifold \cite{AKsM}, Van den Bergh \cite{VdB} defined
a    similar notion   for a  quasi-Poisson  double bracket $\double{-}{-}$
 in   any algebra $A$. In our notation,  his definition may be
reformulated as follows: a \emph{moment map} for $\double{-}{-}$ is
an   invertible   element $\mu\in A$ such that for all $a\in A$,
\begin{equation}\label{mommom}
 \double{\mu}{a}= a\otimes \mu + a\mu \otimes 1 -\mu \otimes a - 1 \otimes \mu a .
\end{equation}

Let $N \geq 1$ be an integer.
Formula \eqref{mommom} allows us to compute the bracket
$ \bracket{(\mu^m)_{ij}}{a_{uv}} \in A_N$ for any $a\in A$,   $m\in \ZZ$, and
  $i,j,u,v\in \{1,\dots, N\}$.  
For   $m>0$,   
\begin{eqnarray*}
\double{\mu^m}{a} &=& \double{\mu^{m-1}\mu}{a} \\
&=& \mu^{m-1}* \double{\mu}{a} + \double{\mu^{m-1}}{a}*\mu\\
&=& a\otimes \mu^m + a\mu \otimes \mu^{m-1} - \mu \otimes \mu^{m-1} a - 1 \otimes \mu^m  a+ \double{\mu^{m-1}}{a}*\mu\\
&=& \sigma_{0,m}(\mu,a)+  \sigma_{1,m-1}(\mu,a) + \double{\mu^{m-1}}{a}*\mu
\end{eqnarray*}
where    $\sigma_{k,m-k}(\mu, a)= a \mu^k \otimes \mu^{m-k}- \mu^k \otimes \mu^{m-k}a$ for any $k\in\{0,\dots,m\}$. 
Since $\sigma_{k,m-1-k}(\mu, a)*\mu= \sigma_{k+1,m-(k+1)}(\mu, a)$, we deduce  recursively  that
\begin{equation}\label{mu^m}
\double{\mu^m}{a} =  \sigma_{0,m}(\mu, a) +  \sigma_{m,0}(\mu, a) + 2 \sum_{k=1}^{m-1} \sigma_{k,m-k}(\mu, a).
\end{equation}
Therefore 
\begin{eqnarray}
\label{mu^m_bis}\quad \quad \bracket{(\mu^m)_{ij}} {a_{uv}} &=&
 a_{uj} (\mu^m)_{iv} - \delta_{uj} (\mu^m a)_{iv}+    (a\mu^m)_{uj} \delta_{iv} - (\mu^m)_{uj} a_{iv}  \\
\notag && +2 \sum_{k=1}^{m-1} \left( (a\mu^{k})_{uj} (\mu^{m-k})_{iv} - (\mu^{k})_{uj} (\mu^{m-k} a)_{iv}  \right).
\end{eqnarray}
Similar formulas hold for the $(-m)$-th power of $\mu$ (where $m>0$). Set  $\overline \mu=\mu^{-1}\in A$. 
Using the equalities $\double{\overline \mu^m}{a}=- \overline{\mu}^m*\double{\mu^m}{a}*\overline{\mu}^{m}$
and $$\overline{\mu}^m*\sigma_{k,m-k}(\mu,a)*\overline{\mu}^m= \sigma_{m-k,k}(\overline{\mu},a),$$
we deduce from \eqref{mu^m} that
\begin{equation}\label{mu^-m}
\double{\overline \mu^m}{a} =  
-\sigma_{0,m}(\overline \mu, a) -  \sigma_{m,0}(\overline \mu, a) - 2 \sum_{k=1}^{m-1} \sigma_{k,m-k}(\overline \mu, a).
\end{equation}
 Hence 
\begin{eqnarray}
\label{mu^-m_bis} \quad \quad \bracket{(\overline \mu^m)_{ij}} {a_{uv}}&=&
- a_{uj} (\overline\mu^m)_{iv} + \delta_{uj} (\overline \mu^m a)_{iv} - (a\overline \mu^m)_{uj} \delta_{iv} 
+(\overline \mu^m)_{uj} a_{iv}  \\
\notag && +2 \sum_{k=1}^{m-1} \left( -(a\overline \mu^{k})_{uj} (\overline \mu^{m-k})_{iv} 
+ (\overline \mu^{k})_{uj} (\overline \mu^{m-k} a)_{iv}  \right).
\end{eqnarray}
These computations imply,  in particular,   that the subalgebra of $A_N$ generated by the  $(\mu^{ m} )_{ij}$'s
(with $m \in\ZZ$ and $i,j\in\{1,\dots,N\}$)   is closed under the bracket $\bracket{-}{-}$.

The next lemma gives another property of   moment  maps   used below.

\begin{lemma}\label{quotient}
Let $\double{-}{-}$ be a quasi-Poisson double bracket in an algebra
$A$ and let $\langle -,-\rangle: \check A \times \check A \to \check
A$ be the induced Lie bracket.
 Let $\mu \in A$ be a moment map,
let $A'=A/A(\mu -1)A$   be the quotient of $A$ by the two-sided
ideal generated by $ \mu-1 $ and let $\check A'= A'/[A',A']$.
 Let $p:A\to A'$ be the projection and
  $\check p:\check A\to \check A'$ be the induced map. Then, for any
$N\geq 1$,
  there is a unique Poisson bracket $\{-,-\}'$  in the subalgebra $(A')^t_N$   of
$A'_N$ generated by   $\tr(\check A' )\subset  A'_N$ such that   the
following diagram   commutes:
$$
\xymatrix@R=0.8cm @C=3cm {
\check A  \otimes \check A
\ar[d]^-{\langle   - ,  -\rangle }\ar[r]^-{ \check p \otimes  \check p}& \check A'\otimes \check A'  \ar[r]^-{ \tr \otimes  \tr}
& (A')^t_N \otimes (A')^t_N \ar[d]^-{\bracket{-}{-}' }   \\
\check A  \ar[r]^-{ \check p }& \check A'  \ar[r]^-{ \tr} & (A')^t_N \, .
}
$$
\end{lemma}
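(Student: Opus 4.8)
The plan is to descend the $H_0$-Poisson structure $\langle-,-\rangle$ of $A$ to the quotient algebra $A'$ and then to invoke the theorem of Crawley--Boevey \cite[Theorem 4.5]{Cb}. Write $I=A(\mu-1)A$, so that $A'=A/I$. Since $[A',A']$ is the image of $[A,A]$ in $A'$, we have $\check A'=A/([A,A]+I)$ and the projection $\check p\colon\check A\to\check A'$ has kernel the image $\overline I$ of $I$ in $\check A$. Once I know that $\langle-,-\rangle$ descends to an $H_0$-Poisson structure $\langle-,-\rangle'$ on $A'$ compatible with $\check p$ (that is, $\check p\,\langle\check a,\check b\rangle=\langle\check p\check a,\check p\check b\rangle'$), Crawley--Boevey will furnish a unique Poisson bracket $\{-,-\}'$ on $(A')^t_N$ with $\bracket{\tr(\check a')}{\tr(\check b')}'=\tr\langle\check a',\check b'\rangle'$ for all $\check a',\check b'\in\check A'$, and commutativity of the diagram will be exactly this identity read through the surjection $\check p$.

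The crucial step, and the only place where the moment map hypothesis enters, is the verification that the derivations attached to the $H_0$-Poisson structure preserve $I$. Recall that for fixed $a\in A$ the map $\langle a,-\rangle\colon A\to A$ is a derivation (property (iii) of Section~\ref{stdb}) which induces $\langle\check a,-\rangle$ on $\check A$. Using the skew-symmetry $\double{a}{\mu}=-P_{21}\double{\mu}{a}$ and the moment map relation \eqref{mommom}, I would compute
\[
\double{a}{\mu}= a\otimes\mu+\mu a\otimes 1-\mu\otimes a-1\otimes a\mu,
\]
and hence, applying the multiplication $x\mapsto x^{(1)}x^{(2)}$ used in \eqref{otherbracket},
\[
\langle a,\mu\rangle=a\mu+\mu a-\mu a-a\mu=0 .
\]
Since $\langle a,1\rangle=0$ as well, the derivation $\langle a,-\rangle$ annihilates the generator $\mu-1$ of $I$; being a derivation, it therefore maps $u(\mu-1)v$ into $I$ for all $u,v\in A$, so $\langle a,-\rangle(I)\subseteq I$. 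I want to stress that the full double bracket does \emph{not} descend to $A'$ --- substituting $\mu=1$ in the displayed formula for $\double{a}{\mu}$ gives $2(a\otimes 1-1\otimes a)\ne 0$ --- and it is precisely the collapse of $\langle a,\mu\rangle$ to $0$ after multiplication that makes the reduction possible.

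With this in hand the descent is routine. Because $\langle a,-\rangle(I)\subseteq I$, every element $\langle\check a,\overline x\rangle$ with $x\in I$ lies in $\overline I$; by skew-symmetry of $\langle-,-\rangle$ on $\check A$ the same holds in the first variable, so $\overline I=\ker\check p$ is a Lie ideal of $(\check A,\langle-,-\rangle)$. Hence $\langle-,-\rangle$ descends to a well-defined skew-symmetric bracket $\langle-,-\rangle'$ on $\check A'$ satisfying $\check p\,\langle\check a,\check b\rangle=\langle\check p\check a,\check p\check b\rangle'$, and the Jacobi identity passes to the quotient. Writing $\check a'=\check p\check a$, the operator $\langle\check a',-\rangle'$ is induced by the derivation of $A'$ obtained by pushing $\langle a,-\rangle$ through $p$, so $\langle-,-\rangle'$ is an $H_0$-Poisson structure on $A'$. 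Applying \cite[Theorem 4.5]{Cb} to $A'$ then yields the unique Poisson bracket $\{-,-\}'$ on $(A')^t_N$ characterised by $\bracket{\tr(\check a')}{\tr(\check b')}'=\tr\langle\check a',\check b'\rangle'$. Finally, for $\check a,\check b\in\check A$ I would chase the diagram: the upper-right composite gives $\{\tr(\check p\check a),\tr(\check p\check b)\}'=\tr\langle\check p\check a,\check p\check b\rangle'=\tr\,\check p\,\langle\check a,\check b\rangle$, which is the lower-left composite, proving commutativity. Uniqueness of $\{-,-\}'$ follows since surjectivity of $\check p$ shows the diagram forces the characterising identity, and a Poisson bracket on $(A')^t_N$ is determined by its values on the generators $\tr(\check A')$. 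The main obstacle is really just the sign-and-term bookkeeping in the computation of $\double{a}{\mu}$ and the multiplication step; the rest is Poisson-reduction formalism.
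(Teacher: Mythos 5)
Your argument is correct, and it is precisely the alternative route that the authors mention and decline to follow: in the paper's proof one reads ``The existence can be deduced from [VdB, Proposition 5.1.5] and [Cb, Theorem 4.5] using $H_0$-Poisson structures, but we rather give a direct proof.'' Your key computation is sound: $\double{a}{\mu}=-P_{21}\double{\mu}{a}= a\otimes\mu+\mu a\otimes 1-\mu\otimes a-1\otimes a\mu$, hence $\langle a,\mu\rangle=0$, the derivation $\langle a,-\rangle$ preserves the ideal $A(\mu-1)A$, the image of that ideal in $\check A$ is a Lie ideal, and the $H_0$-Poisson structure descends to $A'$; then [Cb, Theorem 4.5] applied to $A'$ gives the bracket and the diagram chase is as you say. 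The paper instead works directly at the matrix level in $A_N$: it proves the identity $p_N\bracket{(\mu-1)_{ij}}{y}=2\,p_N(f_{ji}y)$ (of which your $\langle a,\mu\rangle=0$ is the trace-level shadow, since $\sum_i f_{ii}$ acts trivially), deduces that $B=p_N^{-1}\big((A'_N)^{\g_N}\big)$ is closed under $\bracket{-}{-}$ and that the bracket descends to a Poisson bracket on $(A'_N)^{\g_N}$, and then restricts to $(A')^t_N$ and invokes Lemma \ref{comparis} for the commutativity. What each approach buys: yours is shorter and conceptually cleaner, but it leans on [Cb, Theorem 4.5] for an $H_0$-Poisson structure that, as you yourself observe, is \emph{not} induced by a double bracket on $A'$ --- so the paper's remark that this case follows from Lemma \ref{comparis} is unavailable, and you must take Crawley--Boevey's theorem at full strength (it is proved there over a field of characteristic zero, whereas the present paper works over an arbitrary commutative ring $\kk$). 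The paper's direct proof is self-contained, valid over any $\kk$, and yields the stronger conclusion that the whole invariant subalgebra $(A'_N)^{\g_N}$, not merely $(A')^t_N$, carries a Poisson bracket.
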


\begin{proof}
 Set $T=(A')^t_N$.   The uniqueness of   a  Poisson bracket  in
$T$ satisfying the conditions of the lemma  is obvious because by
the Leibniz rule, the values of such a bracket on generators of $T$
 determine the bracket. The existence can be deduced from
\cite[Proposition 5.1.5]{VdB} and \cite[Theorem 4.5]{Cb} using
$H_0$-Poisson structures,  but we rather give a direct proof.
Consider the algebra  map   $p_N:A_N\to A'_N$ induced by $p$
and the algebras $B'=(A'_N)^{\g_N} \subset A'_N$ and
$B=p_N^{-1}(B')\subset A_N$.
 The following two claims are verified
below:

(i)  $\{B,B\}\subset B$ where $\{-,-\}$ is the quasi-Poisson bracket
in $A_N$ induced by the  quasi-Poisson double  bracket
$\double{-}{-}$ in $A$, and

(ii) there is a unique bilinear pairing $\bracket{-}{-}':B' \times
B' \to B'$ such that the following diagram   commutes:
\begin{equation}\label{diagBB}
\xymatrix@R=0.8cm @C=3cm {
B  \otimes B
\ar[d]_-{\bracket{-}{-}}\ar[r]^-{p_N \otimes p_N} & B'\otimes B' \ar[d]^-{\bracket{-}{-}'} \\
B \ar[r]^-{p_N}  & B'.
}
\end{equation}

Then the properties of $\bracket{-}{-}$ and the triviality of the
action of $\g_N$ on $B' $ imply that $\{-,-\}'$ is a Poisson bracket
in $B'$. Clearly, $T \subset B'$. For any $a,b\in \check A$, we have
\begin{eqnarray*}
\bracket{\tr \check p(   a)}{\tr \check p(   b)}' &=&
\bracket{  p_N\tr (    a)}{ p_N \tr(   b)}' \ = \ p_N \bracket{    \tr (    a)}{   \tr(b)}\\
 &=& p_N \tr \left( \left\langle    a , b\right\rangle\right)
 \ = \ \tr \check p\left( \left\langle    a , b\right\rangle\right)
\end{eqnarray*}
where we use  the commutativity of the diagram \eqref{diagBB} and Lemma~\ref{comparis}.
Thus we have  $\{T,T\}'\subset T$, and the restriction of $\{-,-\}'$
to $T\subset B'$ is a Poisson bracket in $T$ satisfying the
conditions of the lemma.

 To prove the claims (i), (ii), we observe
that for all $y\in A_N$ and $i,j\in \{1,\dots,N\}$,
\begin{equation}\label{mu_action}
p_N \bracket{(\mu-1)_{ij}}{y} = 2 p_N(f_{ji} y)
\end{equation}
where $f_{ij}\in \g_N$ is the elementary matrix whose $(i,j)$-th
entry is 1 and the other entries are 0. Since both sides of
\eqref{mu_action} are derivations in $y $, it is enough to check
it for each generator $y=a_{uv}$ where $a\in A$ and $u,v\in
\{1,\dots,N\}$. We have
\begin{eqnarray*}
p_N \bracket{(\mu-1)_{ij}}{a_{uv}} &= &p_N\left(\double{\mu-1}{a}_{uj}^{(1)}  \double{\mu-1}{a}_{iv}^{(2)} \right)\\
&=& p_N\left( \double{\mu}{a}_{uj}^{(1)}  \double{\mu}{a}_{iv}^{(2)}\right)\\
&=& p_N\left(a_{uj} \mu_{iv} + (a\mu)_{uj} \delta_{iv} - \mu_{uj} a_{iv} - \delta_{uj} (\mu a)_{iv}\right)\\
&=& p_N(a_{uj}) \delta_{iv} + p_N(a_{uj}) \delta_{iv} - \delta_{uj} p_N(a_{iv}) - \delta_{uj} p_N(a_{iv})\\
&=& 2 p_N(f_{ji} a_{uv}).
\end{eqnarray*}

We now prove (i).  Observe that
\begin{equation}
\label{J_mu}
B
= p_N^{-1} ((A'_N)^{\g_N})=\left\{ x\in A_N:  wx  \in
\Ker p_N \,  \,\, {\rm {for \,\,\,  all}}  \,\,\,  w\in \g_N\right\}.
\end{equation}
Let us pick any $x,y\in B$ and show that $\bracket{x}{y}\in B$. We
need to show that $ w\bracket{x}{y} \in \Ker p_N$ for all $w\in
\g_N$. By the definition of~$p_N$, the
  ideal
 $\Ker p_N\subset A_N$ is generated by the elements $(\mu-1)_{ij}= \mu_{ij}
-\delta_{ij}$ with $i,j\in\{1,\dots,N\}$. Since $x,y\in B$, we have
$wx= \sum_a r_a (\mu-1)_{i_aj_a}$ and $wy= \sum_b s_b
(\mu-1)_{k_bl_b}$ where $a,b$ run over finite sets of indices,
  $r_a, s_b\in A_N$, and
$i_a,j_a, k_b,l_b  \in \{1,\dots,N\}$. Then
\begin{eqnarray*}
w\!\bracket{x}{y} = \bracket{wx}{y} + \bracket{x}{wy}
&=&  \sum_{a} \bracket{r_a}{y} (\mu-1)_{i_aj_a} + \sum_a r_a \bracket{(\mu-1)_{i_aj_a}}{y}\\
& & + \sum_b \bracket{x}{s_b}(\mu-1)_{k_bl_b} + \sum_b s_b \bracket{x}{(\mu-1)_{k_bl_b}}. 
\end{eqnarray*}
 The following is deduced from \eqref{mu_action} and the fact that
$x,y\in B$:
$$
p_N(w\bracket{x}{y}) = 2 \sum_a p_N(r_a) p_N( f_{j_ai_a}y) +  2 \sum_b p_N(s_b) p_N(f_{l_bk_b}x) = 0.
$$

We now  verify (ii). Since the map $p_N\vert_B:B\to B'$ is
surjective and the bracket $\bracket{-}{-}$ is skew-symmetric, it is
enough to verify that $p_N\bracket{x}{y}=0$ for any $x\in \Ker p_N$
and $y\in B$. It suffices to check the case where $x=r(\mu-1)_{ij}$
with $r\in A_N$ and $i,j\in \{1,\dots,N\}$. Then,  applying
\eqref{mu_action} again, we obtain that
$$
p_N\bracket{x}{y} = p_N \left(\bracket{r}{y}(\mu-1)_{ij}  + r \bracket{(\mu-1)_{ij}}{y}\right)
= 2 p_N(r) p_N( f_{ji} y) = 0.
$$

\vspace{-0.5cm}
\end{proof}

\subsection{Peripheral loops}  \label{peripheral}

   Let $\Sigma$ be an oriented surface with  base point $*\in
\partial \Sigma$, such
that the   component of $
\partial \Sigma$ containing $\ast$ is   a circle.
(This is always the  case if $\Sigma$ is compact.) Let 
$\nu \in  \pi= \pi_1(\Sigma,*)$ be represented by this circle component    viewed
    as a loop   based at $\ast$  with orientation  induced from that of $\Sigma$.   
    Then $\overline \nu =\nu^{-1} \in \pi$  is a moment map
for the quasi-Poisson double bracket $\double{-}{-}^s$   in $\kk\pi$   introduced in Section \ref{PMT}.
Indeed, by formula \eqref{db_hif+}, we have 
$\double{\overline\nu}{a}^\eta = a \otimes \overline \nu - 1\otimes \overline \nu a$  for all $a\in \pi$ so that
$$
\double{\overline \nu}{a}^s = a\otimes \overline \nu
 + a\overline \nu \otimes 1 -\overline \nu \otimes a - 1 \otimes \overline\nu a.
$$
Thus,  formulas \eqref{mu^m}--\eqref{mu^-m_bis} apply to $\mu=\overline{\nu}$.
That the  subalgebra of $(\kk\pi)_N$ generated by the 
$(\nu^k)_{ij}$'s (with $k\in \ZZ$ and  $i,j\in\{1,\dots,N\}$) is closed under the  bracket
can also be deduced from the naturality of $\double{-}{-}^s$ under  inclusions of surfaces:
if $\Sigma$ is not a 2-disk, then $\nu\in \pi$ is
an element of infinite order,  and  the   subalgebra in question   is
the $N$-th quasi-Poisson algebra associated with the annulus $S^1\times [0,1]$.

\subsection{Arbitrary oriented surfaces}\label{generalizswb}
 Let  $\Sigma$ be an oriented surface possibly without
boundary.   Pick a   point $\ast\in \Sigma$ and set
$\pi=\pi_1(\Sigma,*)$ and $A = \kk \pi$. If $\ast\notin \partial
\Sigma$, then we do not have a pairing in $A$ similar to the pairing
of Section \ref{pairingeta} and consecutively have neither a
corresponding double bracket in $A$ nor a quasi-Poisson bracket
in~$A_N$. However, for any choice of $\ast$,  the subalgebra $
A^t_N$ of $A_N$ generated by
  $\tr( \check A) \subset A_N $ has a natural
Poisson bracket which we now define. Recall first the Goldman Lie
bracket $\langle -,- \rangle_{\operatorname{G}}$ in $\check A =A/[A,
A]=\kk \check \pi$. By definition,  for any   $a,b\in \check \pi$,
$$
\langle a, b \rangle_{\operatorname{G}} =   \sum_{p\in \alpha \cap \beta} \varepsilon_p(\alpha,\beta)\  \alpha_p \beta_p
$$
where   $\alpha, \beta$ are  generic loops  in $\Sigma$ representing
$a,b $ and $\alpha_p\beta_p \in \check \pi$ is represented by the
product of the loops $\alpha$, $\beta$ based at $p$.

 \begin{theor}\label{th:quotient}
For any $N\geq 1$, the  algebra $A^t_N$    has a unique Poisson
bracket $\{-,-\}$  such that $\bracket{\tr   a}{\tr   b } = 2  \tr (
\langle     a ,    b \rangle_{\operatorname{G}})$ for all $  a,
b\in \check A$.
\end{theor}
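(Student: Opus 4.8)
The plan is to reduce everything to the theory already developed for surfaces with boundary, treating the case $\partial\Sigma\neq\varnothing$ and the closed case separately. In both cases uniqueness is immediate: a Poisson bracket is a derivation in each variable, so its values on a generating set determine it completely. Since $A^t_N$ is by definition generated by $\tr(\check A)$, there is at most one Poisson bracket satisfying $\bracket{\tr a}{\tr b}=2\tr(\langle a,b\rangle_{\operatorname{G}})$. Thus the whole content of the theorem is the \emph{existence} of such a bracket.

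Suppose first that $\partial\Sigma\neq\varnothing$. The isomorphism type of $A=\kk\pi$ and the Goldman bracket on $\kk\check\pi$ depend only on the (base\-point\-free) homotopy type of $\Sigma$, so I would move the base point to a point of $\partial\Sigma$ and apply the constructions of Section~\ref{PMT}. Theorem~\ref{CQPS} equips $A_N$ with a quasi-Poisson bracket whose restriction to $A^{\g_N}_N\supset A^t_N$ is Poisson, and the second assertion of that theorem, combined with Lemma~\ref{comparis} and the identification in Section~\ref{endofmain} of the induced Lie bracket on $\check A$ with $2\times(\text{Goldman})$, gives precisely $\bracket{\tr a}{\tr b}=\tr(2\langle a,b\rangle_{\operatorname{G}})=2\tr(\langle a,b\rangle_{\operatorname{G}})$. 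This settles the bounded case.

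For closed $\Sigma$ I would remove a small open disk $D$ disjoint from $\ast$ to obtain a surface $\Sigma_0$ with one boundary circle and free fundamental group; after placing a base point on $\partial\Sigma_0$, set $A_0=\kk\pi_1(\Sigma_0)$ and $\check A_0=\kk\check\pi_1(\Sigma_0)$. Filling the disk back in realises $\pi=\pi_1(\Sigma)$ as the quotient of $\pi_1(\Sigma_0)$ by the normal closure of the boundary loop $\nu$, so that $A=A_0/A_0(\overline\nu-1)A_0$. By Section~\ref{peripheral} the element $\overline\nu=\nu^{-1}$ is a moment map for the quasi-Poisson double bracket $\double{-}{-}^s$ on $A_0$, so the situation is exactly that of Lemma~\ref{quotient} with $\mu=\overline\nu$ and the lemma's quotient equal to $A$. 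That lemma produces a Poisson bracket on $A^t_N$ together with a commuting diagram giving $\bracket{\tr\check p(a)}{\tr\check p(b)}=\tr\check p(\langle a,b\rangle)$ for all $a,b\in\check A_0$, where $\langle-,-\rangle=2\langle-,-\rangle_{\operatorname{G}}$ is the Lie bracket on $\check A_0$ coming from $\Sigma_0$ and $\check p\colon\check A_0\to\check A$ is induced by $p$.

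The main obstacle is transporting this along $\check p$, which rests on the naturality of the Goldman bracket under the inclusion $\Sigma_0\hookrightarrow\Sigma$, namely $\check p(\langle a,b\rangle_{\operatorname{G}}^{\Sigma_0})=\langle\check p(a),\check p(b)\rangle_{\operatorname{G}}^{\Sigma}$. I would prove this by choosing generic representatives in $\Sigma_0$ whose images are generic in $\Sigma$: since the inclusion is an embedding it preserves transverse intersection points and their signs and carries the based product loops to the correct based product loops, so Goldman's defining sum is preserved term by term before passing to free homotopy classes. Granting this, the diagram of Lemma~\ref{quotient} yields $\bracket{\tr\check p(a)}{\tr\check p(b)}=2\tr(\langle\check p(a),\check p(b)\rangle_{\operatorname{G}}^{\Sigma})$. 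As $p$ is surjective, $\check p$ is surjective on conjugacy classes, so $\check p(a),\check p(b)$ range over all of $\check\pi$ and the elements $\tr\check p(a)$ generate $A^t_N$; hence the Poisson bracket is defined on all of $A^t_N$ and satisfies the asserted formula. Together with the uniqueness noted at the outset, this completes the closed case and the proof.
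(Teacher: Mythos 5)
Your proof is correct and follows essentially the same route as the paper: uniqueness from the Leibniz rule, existence by restricting the quasi-Poisson bracket of Theorem~\ref{CQPS} when the base point lies on the boundary, and otherwise by removing a disk and invoking Lemma~\ref{quotient} with the peripheral moment map $\overline\nu$. The only cosmetic differences are that the paper splits cases according to whether $\ast\in\partial\Sigma$ (using the disk-removal argument whenever $\ast$ is interior, even if $\partial\Sigma\neq\varnothing$) whereas you move the base point to the boundary in that subcase, and that you spell out the naturality of the Goldman bracket under the inclusion $\Sigma_\circ\hookrightarrow\Sigma$, which the paper leaves implicit.
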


\begin{proof}
The uniqueness of $\bracket{-}{-}$ is obvious and we need only
to prove the existence. When $\ast\in \partial \Sigma$, our
quasi-Poisson bracket in $A_N$ restricts to a Poisson bracket in
$A_N^{\g_N}$ which further restricts to a Poisson bracket   in
$A^t_N$ satisfying the conditions of the theorem. Suppose that
$\ast\in \Sigma \setminus \partial \Sigma$. Let $\Sigma_\circ$
be the oriented surface obtained from $\Sigma$ by removing the
interior of a closed embedded 2-disk  $D\subset \Sigma
\setminus \partial \Sigma$ such that $* \in
\partial D$.   Set $A_\circ=\kk\pi_\circ$ where $\pi_\circ=
\pi_1(\Sigma_\circ,*)$. The algebra homomorphism $ A_\circ \to A$
induced by the inclusion $\Sigma_\circ \subset \Sigma$ is
surjective, and its kernel is the two-sided ideal generated by
$\overline\nu-1$  where $\overline \nu\in \pi_\circ$ is the homotopy
class of $\partial D$ with the orientation induced from that of $D
\subset \Sigma$.   By Lemma \ref{quotient},  the quasi-Poisson
double bracket $\double{-}{-}^s$ in $A_\circ$  induces a Poisson
bracket
  in $A^t_N$. Since the Lie bracket in $\check A_\circ$
induced by $\double{-}{-}^s$ is twice the Goldman   bracket   of
$\Sigma_\circ$,    this Poisson bracket
  in $A^t_N$  satisfies the conditions of the theorem.
\end{proof}

If $\Sigma$ is   connected, then a different choice of a base point in $\Sigma$ results in a Poisson
algebra    canonically   isomorphic to $(A^t_N, \bracket{-}{-})$.
Indeed,    a path $\gamma$ in $\Sigma$ leading from  $\ast $ to $\ast' \in
\Sigma$ determines an algebra isomorphism $ \gamma_{\#}: A \to
  A'=\kk \pi_1 (\Sigma, \ast')   $. The induced isomorphism $ A_N\to
A'_N$ carries $A^t_N $ onto $ (A')^t_N$. Since the isomorphism
$\check \gamma_{\#}:\check A \to \check A' $ preserves the Goldman
bracket and does not depend on the choice of $\gamma$, the resulting
isomorphism $A^t_N \to (A')^t_N$ preserves the Poisson bracket and
does not depend on the choice of $\gamma$.

 When $\kk$ is a field of
characteristic zero and $\Sigma$ is compact,
$A^t_N=A_N^{G_N}$ (see the end of Section \ref{AMT0++}) and Theorem
\ref{th:quotient} yields  a Poisson bracket in $A_N^{G_N}$. When
$\kk=\RR$ or $\kk= \CC $ this is twice the bracket studied in
\cite{Go2}.

 \section{Generalization  to Fuchsian groups}\label{GFG}

In this last section, we generalize Theorems \ref{CQPS} and
\ref{th:quotient} to so-called weighted   surfaces and briefly
discuss connections with Fuchsian groups.

 \subsection{Weighted surfaces}\label{kd01}

  Let   $\Sigma$ be an oriented surface with $\partial
\Sigma\neq \emptyset$ endowed with base point $\ast$ (possibly not
lying on $\partial \Sigma$).  Note that each   circle component
 $X$ of $\partial \Sigma  $   determines a conjugacy class $[X]$ of elements of $\pi=\pi_1(  \Sigma, \ast) $.
 A typical representative of this class has the form $\alpha\beta \alpha^{-1}$ where $ \alpha$ is a path from
 $\ast$ to a point of   $X$   and $\beta$ is a loop going  once along $X$ in the direction determined by the orientation of
 $\Sigma$.

A \emph{weight}  on $\Sigma$ is a map from the set of
  circle components of $\partial \Sigma \setminus \{\ast\}$  to $\ZZ$. Given a weight $n$ on $\Sigma$,    consider the normal
subgroup $\langle n \rangle\subset \pi$ generated by all elements of
the form $x^{n(X)}$ where $X$ runs over the circle components of
$\partial \Sigma \setminus
  \{\ast\}$ and $x$ runs over   $[X]\subset \pi$. The quotient
  group
   $\pi'= \pi/\langle n \rangle $ is called the
   \emph{group of the  weighted surface} $(\Sigma,
   n)$. Set $A=\kk \pi$, $A'=\kk \pi'$, and let $p:A \to
 A'$   be the algebra homomorphism  induced by the projection $\pi \to \pi'$. Recall that
$$
 \check A=A/[A,A]= \kk\check \pi \quad {\rm {and}} \quad \check A'=A'/[A',A']= \kk\check \pi',
$$
  and denote by  $\check p:   \check A\to \check A'$  the linear map induced by  $p$.

We show   that   brackets   associated with $A$ induce corresponding brackets for~$A'$.
Our first observation is that the Goldman Lie bracket
$\langle - , - \rangle_{\operatorname{G}}$   in $\check A$ induces  a unique Lie
bracket   $\langle - , - \rangle'_{\operatorname{G}}$   in $\check A'$ such that the following diagram commutes:
\begin{equation}\label{eq-braiding1}
\begin{split}
\xymatrix@R=0.8cm @C=3cm {
\check A \otimes \check A \ar[r]^-{\langle - , - \rangle_{\operatorname{G}}}\ar[d]_{\check  p \otimes \check  p} & \check A \ar[d]^{\check  p  } \\
  \check A'\otimes \check A'  \ar[r]^-{\langle - , - \rangle'_{\operatorname{G}}} &  \check A'.
}
\end{split}
\end{equation}
This follows from the fact that, for any circle component $X$ of
$\partial \Sigma \setminus \{\ast\}$, all integral powers of
elements of  $ [X]$ are central for the Goldman  bracket of
$\Sigma$.

  \begin{theor}\label{gaws} Suppose that $\ast\in \partial
\Sigma$. Then the following claims hold.

  (i) Let $N\geq 1$ and $p_N:A_N \to
 A'_N$ be the algebra homomorphism induced by   $p$.  Let $\bracket{-}{-}$ be the quasi-Poisson bracket
in $A_N$ produced by Theorem~\ref{CQPS}.   There is a unique map
$\bracket{-}{-}': A'_N \times A'_N\to A'_N$ such that the following
diagram commutes:
 \begin{equation}\label{eq-braiding1+-+}
\begin{split}
\xymatrix@R=0.8cm @C=3cm {
A_N \otimes A_N \ar[r]^-{\bracket{-}{-}}\ar[d]_{p_N \otimes p_N} & A_N \ar[d]^{p_N  } \\
A'_N\otimes A'_N  \ar[r]^-{\bracket{-}{-}'} & A'_N\,.
}
\end{split}
\end{equation}
The map $\bracket{-}{-}'$ is a quasi-Poisson bracket in the
   $(G_N,\g_N)$-algebra    $A'_N$.

(ii)    For all $N \geq 1$,  the trace map $\tr: \check A' \to
 (A'_N)^{\g_N}$ is a homomorphism of Lie algebras where $ \check
 A'$ is endowed with the Lie bracket   $2 \langle - , - \rangle'_{\operatorname{G}}$    and $(A'_N)^{\g_N}$ is
 endowed with the restriction of the bracket $\bracket{-}{-}'$.
\end{theor}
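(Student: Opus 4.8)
The plan is to realize $\bracket{-}{-}'$ as the descent of the quasi-Poisson bracket $\bracket{-}{-}$ on $A_N$ along the surjection $p_N$. Since $p_N$ is onto, such a map is unique if it exists, and the commutativity of \eqref{eq-braiding1+-+} is then its defining property; existence holds precisely when the kernel is preserved, and by the skew-symmetry of $\bracket{-}{-}$ it suffices to have $\bracket{\Ker p_N}{A_N}\subseteq\Ker p_N$. As recalled after Section~\ref{AMT--1}, $\Ker p_N$ is the ideal of $A_N$ generated by the elements $(z^{n(X)})_{kl}-\delta_{kl}$, where $X$ runs over the circle components of $\partial\Sigma\setminus\{\ast\}$ and $z=z_X$ is the peripheral basis element of Figure~\ref{surface} representing $[X]$. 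Because $\bracket{-}{-}$ is a biderivation and $\Ker p_N$ is an ideal, two successive reductions apply: first, Leibniz in the first variable reduces the claim to $\bracket{(z^{n(X)})_{ij}}{y}\in\Ker p_N$ for all $y\in A_N$; second, as $\bracket{(z^{n(X)})_{ij}}{-}$ is a derivation and $\pi$ is free on $(p_1,q_1,\dots,p_g,q_g,z_1,\dots,z_m)$, it suffices to treat the algebra generators $y=a_{uv}$ with $a$ one of these basis elements or its inverse.

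For the crux I would compute $\bracket{(z^{m})_{ij}}{a_{uv}}$ with $m=n(X)$ directly from the double bracket $\double{-}{-}^s$. Iterating the product rule $\double{xy}{a}^s=x*\double{y}{a}^s+\double{x}{a}^s*y$ in the inner bimodule structure \eqref{inner} gives $\double{z^m}{a}^s=\sum_{k=0}^{m-1} z^{k}*\double{z}{a}^s*z^{m-1-k}$, whence by \eqref{db_to_qpb} (with the Sweedler summation left implicit)
\[
\bracket{(z^m)_{ij}}{a_{uv}}=\sum_{k=0}^{m-1}\big(\double{z}{a}^{s(1)}z^{m-1-k}\big)_{uj}\big(z^{k}\double{z}{a}^{s(2)}\big)_{iv}.
\]
Applying $p_N$ and writing $W=\big(p_N(z_{st})\big)_{s,t}$ for the matrix over $A'_N$, we have $W^{m}=\mathrm{Id}$ since $p(z^{m})=1$ in $\pi'$. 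For each basis element $a$, Section~\ref{compact} furnishes a very short formula for $\double{z}{a}^s$: every summand has the form $u\otimes w$ with $u,w$ monomials in $z^{\pm1}$ carrying a single occurrence of $a^{\pm1}$, and upon substitution the sum over $k$ telescopes to zero using $W^{m}=\mathrm{Id}$. The model case is $a=p_u$ with $\double{z}{p_u}^s=p_u\otimes z-p_uz\otimes1+z\otimes p_u-1\otimes zp_u$, where the first two summands cancel one another after shifting the index $k$ and reducing exponents of $W$ modulo $m$, and likewise the last two; the diagonal case $a=z$, where $\double{z}{z}^s=z^2\otimes1-1\otimes z^2$, and the remaining mixed generators behave identically. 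I expect this telescoping to be the main obstacle: one must verify that the formulas of Section~\ref{compact} really do organize into cancelling pairs for each generator and its inverse, with the periodicity $W^{m}=\mathrm{Id}$ closing up the telescope.

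Granting $\bracket{\Ker p_N}{A_N}\subseteq\Ker p_N$, the induced bracket $\bracket{-}{-}'$ is automatically skew-symmetric and a biderivation, and it is quasi-Poisson associated with $\phi_N$: each of \eqref{ssdM--}, \eqref{ssdM++} and the modified Jacobi identity \eqref{ssdM} for $\bracket{-}{-}'$ is the $p_N$-image of the corresponding identity for $\bracket{-}{-}$, since $p_N$ is a surjective $(G_N,\g_N)$-equivariant algebra map and hence $p_N\phi_N(x,y,z)=\phi_N(p_Nx,p_Ny,p_Nz)$. This settles (i). For (ii) I would run a diagram chase, using naturality $\tr\circ\check p=p_N\circ\tr$, the second claim of Theorem~\ref{CQPS}, and the commutativity of \eqref{eq-braiding1}: for $a,b\in\check A$,
\[
\bracket{\tr\check p(a)}{\tr\check p(b)}'
= p_N\bracket{\tr a}{\tr b}
= p_N\,\tr\big(2\langle a,b\rangle_{\operatorname{G}}\big)
= \tr\,\check p\big(2\langle a,b\rangle_{\operatorname{G}}\big)
= \tr\big(2\langle\check p(a),\check p(b)\rangle'_{\operatorname{G}}\big).
\]
As $\check p$ is onto and $\tr(\check A')\subseteq (A'_N)^{\g_N}$, this shows that $\tr\colon\check A'\to (A'_N)^{\g_N}$ carries $2\langle-,-\rangle'_{\operatorname{G}}$ to the Poisson bracket to which $\bracket{-}{-}'$ restricts on $(A'_N)^{\g_N}$, completing (ii).
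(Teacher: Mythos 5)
Your overall architecture is sound and parts (i)-uniqueness, the deduction of quasi-Poissonness of $\bracket{-}{-}'$ from surjectivity of $p_N$, and the diagram chase for (ii) coincide with the paper's. The existence step, however, is handled by a genuinely different route. The paper descends the Fox pairing first: it shows $p\,\eta(\Ker p, A)=0$ by a topological argument (writing $x^{n(X)}=\alpha\beta^{n(X)}\alpha^{-1}$, deforming a test loop off $\beta$ and transverse to $\alpha$, and observing that each intersection point contributes $\pm(a-x^{n(X)}a)$, which dies in $A'$), obtains an F-pairing $\eta'$ on $A'$, builds the double bracket $\double{-}{-}'$ on $A'$ from $\eta'-\overline{\eta'}$, and only then passes to $A'_N$ via Lemma \ref{fdbtopb}. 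You instead descend the bracket directly on $A_N$ by checking $\bracket{\Ker p_N}{A_N}\subseteq \Ker p_N$ on ideal and algebra generators, using the product formula $\double{z^m}{a}^s=\sum_k z^k\ast\double{z}{a}^s\ast z^{m-1-k}$ and the explicit formulas of Section \ref{compact}. Your telescoping does work: each $\double{z}{a}^s$ listed there splits into pairs of the form $x\otimes zy - xz\otimes y$, so the sum over $k$ collapses to boundary terms annihilated by $p\otimes p$ once $p(z)^m=1$ (and, reassuringly, the computation distinguishes these peripheral elements from the moment map $\overline\nu$ of Section \ref{peripheral}, for which the analogous image is $2(a\otimes 1-1\otimes a)\neq 0$). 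What the paper's route buys is generality and economy: the statement of Theorem \ref{gaws} does not assume $\Sigma$ compact, whereas Figure \ref{surface} and the formulas of Section \ref{compact} exist only in the compact case, so as written your argument covers only that case; the topological argument also avoids a case-by-case check over all generator pairs, over inverses of generators, and over negative weights $n(X)<0$, all of which your plan still owes (each is routine --- inverses via $\double{z}{a^{-1}}^s=-a^{-1}\double{z}{a}^s a^{-1}$, negative powers via $\double{z^{-1}}{a}^s=-z^{-1}\ast\double{z}{a}^s\ast z^{-1}$ --- but none is actually carried out). What your route buys is that it stays entirely inside the formalism of $A_N$ and makes the mechanism of descent completely explicit, in the spirit of the paper's own Lemma \ref{quotient}.
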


\begin{proof} The uniqueness of  $\bracket{-}{-}'$ follows from the surjectivity of   $p_N$.
Since the bracket $\bracket{-}{-}$ is quasi-Poisson, the
commutativity of   \eqref{eq-braiding1+-+} implies that the bracket
$\bracket{-}{-}'$ is also quasi-Poisson. We need only to prove the
existence  of $\bracket{-}{-}'$.  We begin by proving that there is
a unique pairing $\eta': A'\times A'\to
 A'$ such that the following diagram commutes:
 \begin{equation}\label{eq-braiding2}
\begin{split}
\xymatrix@R=0.8cm @C=3cm {
A \otimes A \ar[r]^-{\eta}\ar[d]_{p \otimes p} & A \ar[d]^{p  } \\
A'\otimes A'   \ar[r]^-{\eta'} & A'.
}
\end{split}
\end{equation}
The uniqueness of  $ \eta'$ follows from the surjectivity of $p $,
and we need only to prove the existence of $\eta'$. Consider the
two-sided ideal $ \Ker
 p$ of $
 A$. We shall prove that $p\eta(\Ker
 p,A)=0$.   As a left ideal, $\Ker
 p$ is generated by the expressions of
 type $x^{n(X)}-1$ where $X$ runs over the circle components of
$\partial \Sigma \setminus
  \{\ast\}$ and $x$ runs over   $[X]\subset \pi$. Since $\eta$
 is a left Fox derivative in the first variable, it suffices to prove that $p\eta(x^{n(X)}, A)=0$ for all    $X$ and $x\in [X]$.
 Let $x=\alpha\beta \alpha^{-1}$ where $ \alpha$ is a path from
 $\ast$ to a point of   $X$   and $\beta$ is a loop in
 $X$. Clearly,  $x^{n(X)}=\alpha\beta^{n(X)} \alpha^{-1}$. Any loop $\gamma$ in $\Sigma$ based at $\ast$  can be
 deformed to avoid $\beta$ and to meet  $\alpha$
 transversely in a finite set of simple points.  A direct
 application of \eqref{eta} shows that the contribution of each of these points
  to $\eta(x^{n(X)}, \gamma)=\eta(\alpha\beta^{n(X)} \alpha^{-1}, \gamma)$ has the
 form $\pm (a- x^{n(X)} a) $ for some $a\in \pi$. Therefore $p\eta(x^{n(X)},
 \gamma) =0$. A similar argument shows that $p\eta(A, \Ker
 p )=0$. This
 implies the existence of $\eta'$.

The properties of $\eta$ recalled in Section~\ref{PMT}
 imply that $\eta'$ is an F-pairing and $\eta'+\overline{\eta'}=-\rho_1$.
 The double bracket $\double{-}{-}'$  in  $A'$ defined by the skew-symmetric F-pairing
 $\eta'-\overline{\eta'} = 2\eta' + \rho_{1}$ in $A' $ makes
 the following diagram commute:
  \begin{equation}\label{eq-braiding1++}
\begin{split}
\xymatrix@R=0.8cm @C=3cm {
A \otimes A \ar[r]^-{\double{-}{-}^s}\ar[d]_{p \otimes p} & A \otimes A \ar[d]^{p \otimes p  } \\
A'\otimes A'   \ar[r]^-{\double{-}{-}'} & A' \otimes A '\, .
}
\end{split}
\end{equation}
Lemma~\ref{homotopy triple bracket} implies   that $\double{-}{-}'$
is quasi-Poisson. It induces a   quasi-Poisson
 bracket $\{ -, -\}'$ in  $A'_N$  by  Lemma~\ref{fdbtopb}.
  The commutativity
 of the diagram \eqref{eq-braiding1++} implies the commutativity
 of the diagram \eqref{eq-braiding1+-+}.

 The second claim of the theorem is proved following the lines
of Section~\ref{endofmain}.
\end{proof}

The next theorem is an analogue of   Theorem \ref{th:quotient}  
and is proved similarly. This theorem covers
all choices of $\ast$ in $\Sigma$.

\begin{theor}\label{th:quotient++}
For any $N\geq 1$, the    subalgebra  $(A')^t_N \subset A'_N$
generated by $\tr (\check A')$ has a unique Poisson bracket
$\{-,-\}'$  such that $\bracket{\tr a}{\tr b }' = 2  \tr ( \langle a
,    b \rangle'_{\operatorname{G}})$ for all $ a, b\in \check A'$.
\end{theor}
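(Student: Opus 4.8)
The plan is to transcribe the two-case argument used for Theorem~\ref{th:quotient}, inserting the weight where it is felt. Uniqueness is immediate and I would dispose of it first: since $\tr(\check A')$ generates $(A')^t_N$ and the prescribed bracket is a derivation in each variable, the Leibniz rule forces its values everywhere from the formula $\bracket{\tr a}{\tr b}' = 2\tr(\langle a,b\rangle'_{\operatorname{G}})$ on generators. Thus only existence needs work, and I would split according to whether $\ast$ lies on $\partial\Sigma$.

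When $\ast\in\partial\Sigma$, I would simply invoke Theorem~\ref{gaws}. It supplies a quasi-Poisson bracket $\bracket{-}{-}'$ on the $(G_N,\g_N)$-algebra $A'_N$, whose restriction to $(A'_N)^{\g_N}$ — and a fortiori to the subalgebra $(A')^t_N\subset (A'_N)^{\g_N}$ — is a Poisson bracket, by the remark following Lemma~\ref{qP}. Theorem~\ref{gaws}(ii) asserts precisely that $\tr$ carries $2\langle-,-\rangle'_{\operatorname{G}}$ to $\bracket{-}{-}'$, so this restricted Poisson bracket satisfies the required identity.

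The substantive case is $\ast\in\Sigma\setminus\partial\Sigma$, where I would mimic the capping argument. Remove the interior of an embedded disk $D$ about $\ast$ to obtain $\Sigma_\circ$ with $\ast\in\partial\Sigma_\circ$; the circle components of $\partial\Sigma_\circ\setminus\{\ast\}$ are exactly those of $\partial\Sigma$, so $n$ transports to a weight $n_\circ$ on $\Sigma_\circ$ with the same values, yielding $\pi_\circ'$, $A_\circ'$, and (through Theorem~\ref{gaws} and diagram~\eqref{eq-braiding1++}) a quasi-Poisson double bracket $\double{-}{-}'$ in $A_\circ'$ whose induced Lie bracket on $\check A_\circ'$ is twice the weighted Goldman bracket of $\Sigma_\circ$. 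The boundary loop $\overline\nu$ of $D$ is, by Section~\ref{peripheral}, a moment map for $\double{-}{-}^s$ in $A_\circ$; since $\partial D$ is not a weighted component, its image $\overline\nu'$ is invertible in $A_\circ'$, and pushing the identity~\eqref{mommom} through the projection via~\eqref{eq-braiding1++} shows $\overline\nu'$ to be a moment map for $\double{-}{-}'$. Identifying $A_\circ'/A_\circ'(\overline\nu'-1)A_\circ'\cong A'$, I would then apply Lemma~\ref{quotient} to get a Poisson bracket on $(A')^t_N$; its defining diagram, combined with the identification of the induced Lie bracket and the naturality of the weighted Goldman bracket under capping (argued as in~\eqref{eq-braiding1}, using that a boundary-parallel loop is central for the Goldman bracket), delivers the formula $\bracket{\tr a}{\tr b}' = 2\tr(\langle a,b\rangle'_{\operatorname{G}})$.

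The main obstacle I anticipate is the bookkeeping around the moment map under the weight quotient: checking that $\overline\nu'$ genuinely satisfies~\eqref{mommom} in $A_\circ'$, and that the group $\pi_\circ'$ modulo the normal closure of $\overline\nu'$ reproduces $\pi'$ — that is, that the relations coming from $n_\circ$ together with $\overline\nu=1$ generate the same normal subgroup of $\pi_\circ$ as those coming from $n$ together with $\overline\nu=1$. This hinges on the weighted components of $\Sigma_\circ$ and $\Sigma$ coinciding and on the conjugacy classes $[X]$ matching under $\pi_\circ\to\pi$. Once these identifications are in place, everything else is a routine transcription of the proof of Theorem~\ref{th:quotient}.
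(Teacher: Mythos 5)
Your proposal is correct and follows exactly the route the paper intends: the paper's own ``proof'' of Theorem~\ref{th:quotient++} is the single remark that it is ``an analogue of Theorem~\ref{th:quotient} and is proved similarly,'' and you have carried out precisely that adaptation, splitting on whether $\ast\in\partial\Sigma$, invoking Theorem~\ref{gaws} in the boundary case, and in the interior case capping off a disk, checking that $\overline\nu'$ remains a moment map in $A_\circ'$ and that the two quotients of $\pi_\circ$ agree, before applying Lemma~\ref{quotient}. The bookkeeping points you flag (the moment-map identity surviving the projection via diagram~\eqref{eq-braiding1++}, and the compatibility of the normal closures) are exactly the verifications the paper leaves implicit, and your sketches of them are sound.
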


 All the brackets discussed in this section are natural with respect to    the action of
 orientation-preserving    weight-preserving
self-homeomorphisms of the pair $(\Sigma, \ast)$.

\subsection{Examples} 1.  Let $\Sigma$ be   a compact connected
oriented surface of
 genus $g\geq 0$ with   $m+1\geq 2$   boundary components. Picking a base point    on
 one of these components and assigning   integers $n_1, \dots,
 n_m$ to the other components we obtain a  weight on~$\Sigma$. The  group, $\pi'$, of the resulting weighted surface
 is
 presented by $ 2g+m$  generators $p_1, q_1, \dots, p_g, q_g, z_{1}, \dots , z_m $ and the
 relations
 $z_1^{n_1}= \cdots = z_m^{n_m}=1$. This   is a Fuchsian group with at least one parabolic or hyperbolic generator.
Theorem \ref{gaws} yields a natural quasi-Poisson bracket in the
algebra $(\kk\pi')_N$   for all $N\geq 1$.  For   the generators
 chosen as on Figure \ref{surface}, 
the product  $ [p_1, q_1] \cdots [p_{g }, q_{g }] z_1\cdots z_m\in \pi'$ is a moment map for
the double bracket   in $\kk\pi'$ defined in the proof of Theorem~\ref{gaws}.

  2. Let $\Sigma$ be a compact connected oriented surface of
 genus $g\geq 0$ with $m\geq 1$ boundary components. Picking   $\ast \in \Sigma\setminus \partial \Sigma$
  and assigning   integers $n_1, \dots,
 n_m$ to the boundary components we obtain a  weight on~$\Sigma$. The  group, $\pi'$, of the resulting weighted surface   is a
   Fuchsian group
 with  generators $p_1, q_1, \dots, p_g, q_g, z_{1}, \dots , z_m $ subject to the
 relations
 $z_1^{n_1}= \cdots = z_m^{n_m}=1$ and 
 $ [p_1, q_1] \cdots [p_{g }, q_{g }] z_1\cdots z_m=1$.  
Theorem~\ref{th:quotient++} yields a natural Poisson bracket in the
algebra   $(\kk\pi')^t_N$   for all $N\geq 1$.  When $\kk$ is a
field of characteristic zero, this gives a natural Poisson bracket
in the algebra    $(\kk\pi')^{\GL_N( \kk)}_N$.

\appendix

\section{The actions of $G_N$ and $\g_N$ on $A_N$ re-examined}\label{appendix_schemes}

Given an algebra $A$ and an integer
$N\geq 1$, denote by $\mathcal{X}_N^A$ the affine scheme (over
$\kk$) whose set of $B$-points    $\mathcal{X}_N^A(B)$   is
$\Hom_{\Alg} (A, \Mat_N(B))$ for any commutative algebra $B$.   The
coordinate algebra of $\mathcal{X}_N^A$ is the commutative algebra
$A_N$ introduced in Section \ref{AMT0}.   In this appendix, we
analyze from the viewpoint of group schemes   the actions of
$G_N=\GL_N(\kk)$ and $\g_N=\mathfrak{gl}_N(\kk)$ on $A_N$ defined in
Section \ref{AMT0++}.  For the language of group schemes, we refer
the reader to \cite{Ja}.

For any commutative algebra $B$, the group $\GL_N(B)$ acts on
$\Mat_N(B)$ by the conjugation $M\mapsto gMg^{-1}$ where  $M\in
\Mat_N(B)$ and $g\in \GL_N(B)$. This induces an action of $\GL_N(B)$
on $\Hom_{\Alg} (A, \Mat_N(B))$ which is natural in $B$.   These
  actions constitute an action of the group scheme $\GL_N$  on the affine
scheme $\mathcal{X}_N^A$.

\begin{lemma}\label{schemes}
The action \eqref{GN_on_AN} of $G_N=\GL_N(\kk)$ on $A_N$ is induced
by the action of   $\GL_N$  on   $\mathcal{X}_N^A$. The action
\eqref{gN_on_AN} of $\g_N\simeq \operatorname{Lie}(\GL_N)$ on $A_N$
is   the infinitesimal version of the action of $\GL_N$  on
$\mathcal{X}_N^A$.
\end{lemma}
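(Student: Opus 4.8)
The plan is to realize the action of the group scheme $\GL_N$ on $\mathcal{X}_N^A$ as a coaction on the coordinate algebra $A_N$ and then read off both assertions from it. First I would dualize the action morphism $\GL_N \times \mathcal{X}_N^A \to \mathcal{X}_N^A$ into an algebra homomorphism (coaction) $\mathfrak{a}\colon A_N \to \kk[\GL_N]\otimes A_N$, where $\kk[\GL_N]$ is the coordinate Hopf algebra of $\GL_N$, generated by the matrix entries $t_{ij}$ and $\det(t)^{-1}$. To compute $\mathfrak{a}$ on the generators $a_{ij}$, I would feed in the generic point: over the algebra $B = \kk[\GL_N]\otimes A_N$ one has the generic invertible matrix $T = (t_{ij})$ and the tautological homomorphism $s\colon A \to \Mat_N(B)$, $a \mapsto (a_{ij})$, and the action sends $(T,s)$ to $a \mapsto T\,(a_{ij})\,T^{-1}$. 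Reading off the $(i,j)$-entry and separating the commuting tensor factors gives
$$\mathfrak{a}(a_{ij}) = t_{ik}\,(t^{-1})_{lj}\otimes a_{kl},$$
with summation over the repeated indices $k,l$.

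For the first claim I would recall that a left action on a scheme induces on its functions the pullback along inverse group elements, so that the action of a $\kk$-point $g\in G_N=\GL_N(\kk)$ on $A_N$ is $(\ev_{g^{-1}}\otimes\id)\circ\mathfrak{a}$, where $\ev_{g^{-1}}\colon\kk[\GL_N]\to\kk$ sends $t_{pq}\mapsto(g^{-1})_{pq}$ and $(t^{-1})_{pq}\mapsto g_{pq}$. Evaluating yields $g\cdot a_{ij} = (g^{-1})_{i,k}\,a_{kl}\,g_{l,j}$, which is precisely \eqref{GN_on_AN}. This merely repackages in scheme language the Yoneda computation already carried out around \eqref{unique_action} in Section~\ref{AMT0++}, so this part requires little beyond bookkeeping.

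For the second claim I would identify $\Lie(\GL_N)$ with $\g_N$ through the $\kk[\epsilon]/(\epsilon^2)$-points $1+\epsilon w$ of $\GL_N$ (here $w\in\g_N$), noting that $(1+\epsilon w)^{-1}=1-\epsilon w$ since $\epsilon^2=0$. The infinitesimal action of $w$ is the coefficient of $\epsilon$ in the action of $1+\epsilon w$ on $A_N\otimes_\kk\kk[\epsilon]/(\epsilon^2)$, computed via the $\kk[\epsilon]/(\epsilon^2)$-linear extension of \eqref{GN_on_AN}. Substituting $g=1+\epsilon w$ and $g^{-1}=1-\epsilon w$ gives, exactly (no higher-order terms, as $\epsilon^2=0$),
$$g\cdot a_{ij} = (\delta_{ik}-\epsilon w_{i,k})\,a_{kl}\,(\delta_{lj}+\epsilon w_{l,j}) = a_{ij} + \epsilon\,(a_{is}w_{s,j}-w_{i,s}a_{sj}),$$
whose $\epsilon$-coefficient is the formula \eqref{gN_on_AN}. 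Equivalently, this is the contraction $-(\partial_w\otimes\id)\circ\mathfrak{a}$ of the coaction with the tangent vector $\partial_w$ at the identity determined by $w$, where $\partial_w(t_{pq})=w_{p,q}$.

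The computations themselves are short; the one point demanding care is the consistent handling of the left-action convention. Because a left action on points becomes, on functions, the pullback along inverse group elements, both \eqref{GN_on_AN} and \eqref{gN_on_AN} carry the twist $g\mapsto g^{-1}$, respectively $w\mapsto -w$, relative to the bare coaction $\mathfrak{a}$. I expect this sign/inverse bookkeeping to be the only genuine obstacle; once the convention is fixed as above, the identifications with \eqref{GN_on_AN} and \eqref{gN_on_AN} are immediate.
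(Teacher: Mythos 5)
Your argument is correct and follows essentially the same route as the paper's: both pass to the coaction of $\kk[\GL_N]$ on $A_N$ (the paper's $\Delta_N$ is your $\mathfrak{a}$ up to swapping tensor factors and applying the antipode, which accounts for your extra sign in $-(\partial_w\otimes\id)\circ\mathfrak{a}$), compute it on generators by evaluating at the generic/identity point, recover \eqref{GN_on_AN} from $\kk$-points with the inverse twist, and obtain \eqref{gN_on_AN} by differentiating along $1+\varepsilon w$ over the dual numbers. The sign and inverse bookkeeping you flag is handled consistently and matches the paper's conventions in \eqref{unique_action}.
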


\begin{proof}
For any commutative algebra $K$, the action of the group scheme
$\GL_N$ on  $\mathcal{X}_N^A$ induces
   an action of $\GL_N(K)$   on $A_N \otimes K$ by $K$-algebra
automorphisms  as follows. Consider the affine scheme over $K$
obtained from $\mathcal{X}_N^A$ by changing the coefficients from
$\kk$ to $K$. We can view $A_N \otimes K$ as the coordinate algebra
of this scheme through the canonical isomorphism
\begin{equation}\label{change_coef}
\Hom_{K-\CAlg}(A_N \otimes K, B) \simeq \Hom_{\CAlg}(A_N, B)
\end{equation}
for any commutative $K$-algebra $B$. Then
\begin{equation}\label{unique_action_bis}
r(gx)=( g^{-1} r)(x)
\end{equation}
for all  $r\in\Hom_{K-\CAlg}(A_N \otimes K, B)$, $g\in \GL_N(K)$ and $x\in A_N\otimes K$.
Here, the action of $\GL_N(K)$ on $\Hom_{K-\CAlg}(A_N \otimes K, B)$
  is induced by the action of $\GL_N(B)$ on $\mathcal{X}_N^{A}(B)$ 
using  \eqref{change_coef}, \eqref{adjunction}, and the canonical map $\GL_N(K) \to \GL_N(B)$.  
This yields an action of $\GL_N(K)$ on   $A_N
\otimes K$. Applying \eqref{unique_action_bis} to $B=A_N\otimes K$,
$r=\id_{A_N \otimes K}$ and $x=a_{ij}\otimes 1$ (with $a\in A$ and
$i,j\in \{1,\dots,N\}$), we obtain
\begin{equation}\label{beurk}
g (a_{ij}\otimes 1)  = (g^{-1} \id_{A_N\otimes K})(a_{ij}\otimes 1) =   a_{kl} \otimes (g^{-1})_{i,k} g_{l,j}
\end{equation}
for any $g\in \GL_N(K)$. For $K=\kk$, we recover the action
\eqref{GN_on_AN} of $G_N$ on   $A_N$.

The action of $\GL_N(K)$ on $A_N\otimes K$  is natural in $K$ 
and determines thus an action of the group  scheme $\GL_N$ on $A_N$.  
Recall that an action  of a group scheme is equivalent to a
comodule over its coordinate  Hopf  algebra, see \cite[\S I.2.8]{Ja}. 
 Hence we obtain   a comodule structure in $A_N$, which is given by a linear map
$\Delta_N: A_N \to A_N \otimes \kk[\GL_N]$ where $\kk[\GL_N]$ is the
coordinate algebra of $\GL_N$. Since   $\GL_N$ acts on $A_N$ by
algebra automorphisms, $\Delta_N$ is an algebra homomorphism. We now
compute   $\Delta_N$ on the generator $a_{ij}\in A_N$ for any $a\in
A$ and $i,j\in\{1,\dots,N\}$. We have $\Delta_N(a_{ij}) =
\id_{\kk[\GL_N]} (a_{ij}\otimes 1)$ where the action of
$\id_{\kk[\GL_N]}\in  \Hom_{\CAlg}(\kk[\GL_N], \kk[\GL_N])=  \GL_N(\kk[\GL_N])$ 
is given by \eqref{beurk}   with $K=\kk[\GL_N]$.  
The matrix $\id_{\kk[\GL_N]}$ is equal to $(x_{ij})_{i,j}$ where
$x_{ij}\in \kk[\GL_N]$ maps any $M\in \GL_N(B)$ to the $(i,j)$-th
entry of $M$ for any commutative algebra $B$, and
$(\id_{\kk[\GL_N]})^{-1}$ is the matrix $(\overline x_{ij})_{i,j}$
where $\overline x_{ij}\in \kk[\GL_N]$ maps any $M\in \GL_N(B)$ to
the $(i,j)$-th entry of $M^{-1}$. Therefore
\begin{equation}\label{DeltaN}
\Delta_N(a_{ij}) = a_{kl} \otimes \overline x_{ik} x_{lj}.
\end{equation}

Recall that the Lie algebra of a group scheme is its tangent space
at the unit point,  and any representation of a group scheme carries
the structure of a module over the corresponding Lie algebra, see
\cite[\S I.7]{Ja}.  The latter structure is the \lq\lq infinitesimal version" of the representation of the group scheme. We have
$$
\Lie(\GL_N) = \left\{\mu \in \Hom(\kk[\GL_N], \kk): \mu(1)=0, \mu(I_1^2)=0\right\} \simeq \Hom(I_1/I_1^2,\kk)
$$
where $I_1\subset \kk[\GL_N]$ is the ideal   consisting of all $x
\in \kk[\GL_N]$ such that $x(1)=0$.  Viewing $\kk$   as a
$\kk[\GL_N]$-module via the map $\kk[\GL_N] \to \kk, x\mapsto x(1)$,
we can identify elements of $\Lie(\GL_N)$ with derivations
$\kk[\GL_N]\to  \kk$.  The action of $\Lie(\GL_N)$ on $A_N$ induced
by the action of $\GL_N$ on $A_N$
  carries  any $\mu
\in \Lie(\GL_N)$ to   the composite map
$$
A_N \stackrel{\Delta_N}{\longrightarrow} A_N \otimes \kk[\GL_N]
\stackrel{\id \otimes \mu}{\longrightarrow} A_N \otimes \kk  \simeq A_N.
$$
Since $\Delta_N$ is an algebra homomorphism and $\mu$   is a
derivation, the composite map is   a derivation, and we  obtain an
action of $\Lie(\GL_N)$ on $A_N$ by
 derivations.

   The Lie algebra $\g_N=\mathfrak{gl}_N(\kk)$ can be
identified with $\Lie(\GL_N)$ by sending any $w\in \g_N$ to the
linear map $\mu_w: \kk[\GL_N]\to \kk$ defined by
$$
\mu_w(x) =  q(x(1+ \varepsilon w)) \quad \hbox{for all $x\in \kk[\GL_N]$}
$$
 where $\varepsilon$ is the generator of the algebra of dual numbers $\kk[\varepsilon]/(\varepsilon^2)$, 
$x(1+ \varepsilon w)$ is the evaluation of $x$
at  $1+ \varepsilon w\in \GL_N(\kk[\varepsilon]/(\varepsilon^2))$ and $q:
\kk[\varepsilon]/(\varepsilon^2) \to \kk$ is defined by $q(k+l\varepsilon)=l$ for
all $k,l\in \kk$.  Through this identification, the action of
$\Lie(\GL_N)$ on $A_N$ determines an action of $\g_N$ on $A_N$. For
any $w\in \g_N$,  $a\in A$ and $i,j\in \{1,\dots,N\}$, 
\begin{eqnarray*}
w a_{ij}=(\id \otimes \mu_w) \Delta_N(a_{ij}) & \stackrel{\eqref{DeltaN}}{=} & \mu_w( \overline x_{ik} x_{lj}) a_{kl} \\
&=& q( \overline x_{ik}(1+\varepsilon w) x_{lj}(1+ \varepsilon w))  a_{kl}\\
&=& q((\delta_{ik} - \varepsilon w_{i,k})(\delta_{lj}+ \varepsilon w_{l,j})) a_{kl}\\
&=& (-w_{i,k} \delta_{lj} + \delta_{ik} w_{l,j}) a_{kl}
\ =\  -w_{i,k} a_{kj}  + a_{il} w_{l,j}.
\end{eqnarray*}
Thus, this action of $\g_N$ coincides with the action
\eqref{gN_on_AN}.
\end{proof}

Lemma \ref{schemes} has the following useful consequence. If $\kk$
is an algebraically closed field, then $A_N^{G_N}$ coincides with
the $\GL_N$-invariant part of $A_N$ as a representation of the group
scheme $\GL_N$, see \cite[\S I.2.8]{Ja}. Therefore $A_N^{G_N}\subset A_N^{\g_N}$.

\section{Comparison of    quasi-Poisson structures}\label{appendix_comparison}

 We compare our quasi-Poisson structure on  representation manifolds  with those defined in \cite{AKsM}.
We assume that $\kk=\RR$ and use   notations of Section \ref{QPMRS}.
  In order to make the computations of \cite{AKsM} compatible with our conventions, 
we multiply the quasi-Poisson bivector fields appearing in \cite{AKsM}  by~$2$.

\subsection{Preliminaries}

 In this subsection, $G$ is a Lie group whose Lie algebra $\g$ is endowed with a $G$-invariant
non-degenerate symmetric bilinear form $\cdot: \g \times \g \to \RR$.
The construction of \cite{AKsM} uses three main ingredients:  
 a so-called \emph{fusion product} of quasi-Poisson manifolds,
a canonical quasi-Poisson structure on  $G$ and a   preferred   quasi-Poisson structure on  $G \times G$.
We briefly recall the relevant definitions.  

The  fusion product is defined as follows.   
Let $\mathfrak{d} = \g \oplus \g$ be the Lie algebra of the Lie group $D=G \times G$.
Pick a  basis $(e_i)_i$ of $\g$, let  $(e_i^\sharp)_i$ be the basis of  $\g$ dual to $(e_i)_i$ with respect to the form $\cdot$ and set
\begin{equation}\label{psi}
\psi = \sum_{i} (e_i^\sharp,0) \wedge (0,e_i)  \ \in \Lambda^2 \mathfrak{d}.
\end{equation}
The bivector $\psi$ is independent of the choice of the basis $(e_i)_i$ 
because  it corresponds to the skew-symmetric bilinear pairing  
$$
\mathfrak{d} \times \mathfrak{d} \longrightarrow \RR,
\big((v_1,v_2),(w_1,w_2)\big)\longmapsto v_1\cdot w_2 - v_2 \cdot w_1
$$
through the canonical isomorphism $\Lambda^2 \mathfrak{d} \simeq \Lambda^2 \mathfrak{d}^*$ 
induced by the form  $\cdot: \g \times \g \to \RR$. 
 
 Consider $G$-manifolds $M_1$ and $M_2$ equipped with quasi-Poisson bivector fields $P_1$ and $P_2$, respectively.
Let $\operatorname{pr}_i: M_1 \times M_2 \to M_i$ be the $i$-th projection for $i=1,2$.
We  identify $T(M_1\times M_2)$ with $\operatorname{pr}_1^*(T(M_1))\oplus \operatorname{pr}_2^*(T(M_2))$ 
where $T$ stands for the tangent bundle of a manifold.  
The Lie group $D =G\times G$ acts on $M_1 \times M_2$ and, under this action, the bivector 
$\psi\in \Lambda^2 \mathfrak{d}$ generates a bivector field $\psi_{M_1\times M_2}$ of $M_1 \times M_2$.
If we endow $M_1 \times M_2$ with the diagonal $G$-action, then
\begin{equation}\label{fusion}
P_1 \circledast P_2 = \operatorname{pr}_1^*(P_1) + \operatorname{pr}_2^*(P_2) - \psi_{M_1\times M_2}
\end{equation}
is a quasi-Poisson bivector field on $M_1 \times M_2$, see \cite[\S 5]{AKsM}.

For any $v\in \g$,   denote by $v_G^{L}$ the vector field on $G$ generated by $v$
through the left action of  $G$   on   itself by  $(g,m)\mapsto mg^{-1}$.
Similarly,   denote by $v_G^R$ the  vector field on $G$ generated by $v$
through the left action of  $G$   on   itself by  $(g,m)\mapsto gm$.
Note that the vector field $v_G^{L}$ is left-invariant while $v_G^{R}$ is right-invariant. 
(Our notation differs from that of  \cite{AKsM} where $v_G^L$ is denoted    by $v^L$ and   $v_G^R$ is denoted    by $-v^R$.)
Then
\begin{equation} \label{P_G}
P_G = \sum_i (e_i)_G^L \wedge (e_i^\sharp)_G^R
\end{equation}
is a quasi-Poisson bivector field on the underlying manifold of $G$ endowed with left action of $G$ by conjugations, see \cite[\S 3]{AKsM}. 
Note that $P_G$  does not depend on the choice of the   basis $(e_i)_i$ of $\g$
since it is the bivector field generated by $-\psi$ when 
the Lie group $D$ acts on $G$ by $((g_1,g_2),g) \mapsto g_1 g g_2^{-1}$.

Let $G$ act  diagonally on the left of $D=G\times G$ by conjugations. Then
\begin{eqnarray}
\label{P_D} P_D &=&
-\sum_{i} \pr_1^*(e_i)^L_G \wedge \pr_2^*(e_i^\sharp)^R_G -\sum_{i} \pr_1^*(e_i)^R_G \wedge \pr_2^*(e_i^\sharp)^L_G\\
\notag &&-\sum_{i} \left(\pr_1^*(e_i)_G^R + \pr_2^*(e_i)_G^L \right)
\wedge \left(\pr_1^*(e_i^\sharp)_G^L +  \pr_2^*(e_i^\sharp)^R_G\right)
\end{eqnarray}
is a quasi-Poisson bivector field on $D$, see \cite[Examples \ 5.3 \&  5.4]{AKsM}.
The idea behind the definition of  $P_D$ is as follows.
There is a general procedure  which transforms a quasi-Poisson $(G\times G)$-manifold  into a quasi-Poisson $G$-manifold 
and which generalizes the fusion product of quasi-Poisson $G$-manifolds, see \cite[Proposition 5.1]{AKsM}.
Applying this procedure three times to the manifold $D =G\times G$ 
(where $G^4$ acts by the left/right multiplication on the two factors), one obtains  $P_D$.

\subsection{Comparison}

Assume   that $G =\GL_N(\RR)$ with $N\geq 1$.
 The Lie algebra $\g =\mathfrak{gl}_N(\RR)$ of $G$ is equipped with
the trace form defined by $v\cdot w = \tr(vw)$ for any   $v, w\in \g$.  
Let $\Sigma$ be a compact connected oriented  surface of genus $g\geq 0$ with  $m+1\geq 1$ boundary components.
We choose a base point $\ast \in \partial \Sigma$,
and   fix  a basis $(p_1,q_1,\dots,p_g,q_g,z_1,\dots,z_m)$ of   the free group $\pi=\pi_1(\Sigma,\ast)$ as shown on Figure \ref{surface}.
We use this basis to identify  $\mathcal H= \Hom(\pi,G)$  with $D^{g} \times G^m$ where $D=G \times G$.
By \cite{AKsM}, we have the following  quasi-Poisson bivector field on $\mathcal H$:
$$
P'= \underbrace{P_D \circledast \cdots \circledast P_D}_{\hbox{\scriptsize $g$ times}} \circledast
\underbrace{P_G \circledast \cdots \circledast P_G}_{\scriptsize \hbox{$m$ times}}.
$$
The following theorem shows that  the  resulting quasi-Poisson manifold $(\mathcal H,P')$ 
coincides with  the quasi-Poisson manifold  $(\mathcal H,P)$ produced by Theorem \ref{CQPS}.  

\begin{theor}\label{P'_P}
The   bivector field $P'$  
is equal to the   bivector field $P$ of  Theorem~\ref{CQPShahaha}.
\end{theor}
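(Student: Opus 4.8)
The plan is to prove $P=P'$ by comparing the two bivector fields coordinate-wise. As in the proof of Theorem~\ref{CQPShahaha}, the functions $\tilde x^u_{ij}=\ev(x^u_{ij})$ (with $u\in\{1,\dots,2g+m\}$ and $i,j\in\{1,\dots,N\}$) form a global system of coordinates on $\mathcal H\cong G^{2g+m}$, so their differentials span the cotangent space at every point; consequently a bivector field on $\mathcal H$ is determined by the numbers $\langle d\tilde x^u_{ij}\wedge d\tilde x^v_{kl},-\rangle$, that is, by the brackets of the coordinate functions. Thus it suffices to verify that the brackets $\bracket{\tilde x^u_{ij}}{\tilde x^v_{kl}}_{P'}=\langle d\tilde x^u_{ij}\wedge d\tilde x^v_{kl},P'\rangle$ coincide with the brackets $\bracket{\tilde x^u_{ij}}{\tilde x^v_{kl}}_P$, the latter being given by formulas \eqref{values_1}--\eqref{values_mixed_bis}.

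First I would record the action of the relevant vector fields on the matrix-entry functions. Writing $x_{ab}$ for the $(a,b)$-coordinate on a single copy of $G=\GL_N(\RR)$ and using the elementary matrices $f_{cd}$ together with their trace-duals $f_{cd}^\sharp=f_{dc}$, a direct computation from the definitions gives $(f_{cd})_G^L(x_{ab})=x_{ac}\delta_{db}$ and $(f_{dc})_G^R(x_{ab})=-\delta_{ad}x_{cb}$ for the left- and right-invariant fields, while the conjugation-generated field satisfies $v^{\mathrm{conj}}(x_{ab})=(xv-vx)_{ab}$, in agreement with \eqref{g_on_A}--\eqref{gN_on_AN}. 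With these at hand, the same-factor brackets become purely local computations: expanding $P_G$ of \eqref{P_G} yields exactly \eqref{values_2}, and expanding the three sums of $P_D$ of \eqref{P_D} yields \eqref{values_two}, \eqref{values_three} and \eqref{values_four}, respectively. Here the sign asymmetry among the $\bracket{p}{p}$, $\bracket{q}{q}$ and $\bracket{p}{q}$ formulas arises from the distinct left/right slots occupied by the two $G$-factors of $D$ in \eqref{P_D}.

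Next I would treat the cross-factor brackets. If $\tilde x^u$ and $\tilde x^v$ lie on distinct tensor factors, then in the fusion formula \eqref{fusion} the pulled-back bivectors $\pr_1^*P_1$ and $\pr_2^*P_2$ pair differentials supported on a single factor and hence contribute nothing; only the mixing term $-\psi_{M_1\times M_2}$ survives. Using \eqref{psi} and the conjugation action, the resulting contribution equals $-\sum_{c,d}\big((f_{dc})^{\mathrm{conj}}\tilde x^u_{ij}\big)\big((f_{cd})^{\mathrm{conj}}\tilde x^v_{kl}\big)$, which I compute to be precisely the right-hand sides of \eqref{values_1}, \eqref{values_one}, \eqref{values_mixed} and \eqref{values_mixed_bis}. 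To complete this step one must check that in the iterated fusion $P_D\circledast\cdots\circledast P_D\circledast P_G\circledast\cdots\circledast P_G$ each ordered pair of distinct factors receives exactly one such mixing term, with the earlier factor playing the role of $M_1$; this holds because at each fusion step the accumulated block carries the diagonal $G$-action, so $-\psi$ mixes the new factor against each previous factor once, matching the convention $u<v$ under which \eqref{values_1} and \eqref{values_one} are stated.

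The main obstacle will be the bookkeeping in the $P_D$-computation and in the iterated fusion: $P_D$ is a sum of three bivectors with mixed left/right and factor-$1$/factor-$2$ legs, so isolating the $\bracket{p}{p}$, $\bracket{q}{q}$ and $\bracket{p}{q}$ contributions while keeping the signs straight requires care, and one must confirm that the fusion mixing terms are counted once and only once with the correct ordering. Once every coordinate bracket has been matched to the corresponding formula among \eqref{values_1}--\eqref{values_mixed_bis}, the two bivector fields agree on a spanning set of bivectors at each point of $\mathcal H$, whence $P'=P$. Throughout, the factor-of-$2$ normalization fixed at the start of this appendix (and in Remark~\ref{qP-manifolds}.2) must be tracked consistently, so that the bivector fields $P_G$, $P_D$ and the $\psi$-term reproduce our doubled brackets exactly.
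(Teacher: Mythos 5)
Your proposal is correct, and its computational core (the evaluation of $(f_{rs})^L_G$, $(f_{sr})^R_G$ and the conjugation-generated fields on matrix-entry functions, and the observation that cross-factor brackets receive contributions only from the $\psi$-mixing terms) is exactly what the paper computes. The organization, however, is genuinely different. The paper isolates two base cases, the annulus $(g,m)=(0,1)$ giving $P_G$ versus \eqref{values_2} and the one-holed torus $(g,m)=(1,0)$ giving $P_D$ versus \eqref{values_two}--\eqref{values_four}, and then proves a gluing claim: if $\Sigma_0$ is a boundary connected sum of $\Sigma_1$ and $\Sigma_2$, then $P_0=P_1\circledast P_2$. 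Same-factor brackets are then handled ``for free'' by naturality under inclusions of surfaces (since $\double{x}{y}^s$ is computed entirely inside one summand) and by the fact that each projection carries $P_1\circledast P_2$ to $P_j$; the cross-factor check reduces to a single computation with $-\psi$, and the theorem follows by iterating the connected sum. Your route instead expands the full iterated fusion $P_D\circledast\cdots\circledast P_G$ globally, which forces you to prove explicitly the bookkeeping lemma you flag: that the iterated fusion equals $\sum_j\pr_j^*P_j-\sum_{j<k}\psi_{jk}$ with each ordered pair of factors mixed exactly once. This is true (because the vector field generated by $v$ under the diagonal action on an accumulated block is the sum of the pullbacks of the $v_{M_j}$, so each new $-\psi$ splits into one mixing term per previous factor), but it is an extra lemma the paper's inductive formulation avoids. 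One small inaccuracy in your sketch: it is not the case that the three displayed sums of \eqref{P_D} yield \eqref{values_two}, \eqref{values_three}, \eqref{values_four} ``respectively'' --- the first two sums have one leg on each factor and hence contribute only to $\bracket{\tilde p}{\tilde q}$, while the third sum, once expanded, contributes to all three brackets; this is harmless for the strategy but should be stated correctly when you carry out the computation.
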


\begin{proof} We first verify the equality $P'=P$ in the case where 
  $g=0$ and $m=1$.
Then the group $\pi$ is freely generated by $z=z_1$
and   $\mathcal{H}=G$.  
Since the elementary matrices $f_{ij}$ and $f_{ji}$ (with $i,j\in \{1,\dots,N\}$) provide dual bases of $\g$, 
\eqref{P_G} gives
$$
P'=P_G =  (f_{rs})_G^L \wedge (f_{sr})_G^R
$$ 
so that, for any $i,j,k,l \in \{1,\dots, N\}$,
\begin{eqnarray*}
\bracket{\tilde z_{ij}}{\tilde z_{kl}}_{P'}& =&  (f_{rs})_G^L(\tilde z_{ij})\ (f_{sr})_G^R(\tilde z_{kl})
- (f_{rs})_G^L(\tilde z_{kl})\ (f_{sr})_G^R(\tilde z_{ij}).
\end{eqnarray*}
Moreover we have
\begin{equation}
\label{R_and_L} (f_{sr})_G^R(\tilde  z_{ij}) = - \delta_{is} \tilde z_{rj}
\quad \hbox{and} \quad
(f_{rs})_G^L(\tilde  z_{ij}) =  \delta_{js} \tilde z_{ir}
\end{equation}
since, at any point $m\in G$,
\begin{eqnarray*}
 (f_{sr})_G^R(\tilde  z_{ij}) (m) &=&
 \Big. \frac{d}{dt} \Big\vert_{ t=0} \tilde  z_{ij}(e^{-tf_{sr}}m)
\ = \ - \tilde z_{ij}(f_{sr}m) \ = \ -\delta_{is} m_{rj}\\
(f_{rs})_G^L(\tilde  z_{ij}) (m) &=&
 \Big. \frac{d}{dt} \Big\vert_{ t=0} \tilde  z_{ij}(me^{tf_{rs}})
 \ =  \ \tilde z_{ij}(mf_{rs}) \ = \ \delta_{js} m_{ir}.
\end{eqnarray*}
We deduce that
\begin{eqnarray*}
\bracket{\tilde z_{ij}}{\tilde z_{kl}}_{P'} & =&
( \delta_{js} \tilde z_{ir})\  (- \delta_{ks} \tilde z_{rl})
-  (\delta_{ls} \tilde z_{kr})\, (- \delta_{is} \tilde z_{rj})\\
&=& - \delta_{jk} \tilde z_{ir}  \tilde z_{rl}
+ \delta_{il} \tilde z_{kr} \tilde z_{rj}
\quad  \stackrel{\eqref{values_2}}{=} \quad \bracket{\tilde z_{ij}}{\tilde z_{kl}}_P.
\end{eqnarray*}

Next, we verify the equality $P'=P$ in the case where  $g=1$ and $m=0$.
Then the group $\pi$ is freely generated by $(p,q)=(p_1,q_1)$
and  $\mathcal{H}=D$.
We have 
\begin{eqnarray*}
P'=P_D &=& - \pr_1^*(f_{rs})^L_G \wedge \pr_2^*(f_{sr})^R_G -\pr_1^*(f_{rs})^R_G \wedge \pr_2^*(f_{sr})^L_G\\
&& - \pr_1^*(f_{rs})^R_G \wedge \pr_2^*(f_{sr})^R_G + \pr_1^*(f_{rs} )^L_G \wedge \pr_2^*(f_{sr})^L_G \\
 &&+  \pr_1^*(f_{rs})_G^L \wedge \pr_1^*(f_{sr})^R_G -    \pr_2^*(f_{rs})_G^L \wedge \pr_2^*(f_{sr})^R_G.
\end{eqnarray*}
By computations similar to the previous case, we have for any $i,j,k,l\in\{1,\dots,N\}$
\begin{eqnarray*}
\bracket{\tilde p_{ij}}{\tilde{p}_{kl}}_{P'} &=& - \delta_{jk} \tilde p_{ir}  \tilde p_{rl}
+ \delta_{il} \tilde p_{kr} \tilde p_{rj} \ \stackrel{\eqref{values_two}}{=} \ \bracket{\tilde p_{ij}}{\tilde{p}_{kl}}_{P}\\
\bracket{\tilde q_{ij}}{\tilde{q}_{kl}}_{P'} &=& -(- \delta_{jk} \tilde q_{ir}  \tilde q_{rl}
+ \delta_{il} \tilde q_{kr} \tilde q_{rj}) \ \stackrel{\eqref{values_three}}{=} \ \bracket{\tilde q_{ij}}{\tilde{q}_{kl}}_{P}.
\end{eqnarray*}
Moreover,  
\begin{eqnarray*}
\bracket{\tilde p_{ij}}{\tilde{q}_{kl}}_{P'} &=& - (f_{rs})^L_G (\tilde p_{ij}) (f_{sr})^R_G(\tilde q_{kl})
-(f_{rs})^R_G(\tilde p_{ij}) (f_{sr})^L_G(\tilde q_{kl})\\
&&
- (f_{rs})_G^R (\tilde p_{ij}) (f_{sr})^R_G (\tilde q_{kl}) + (f_{rs})_G^L (\tilde p_{ij}) (f_{sr})^L_G(\tilde q_{kl})\\
& \stackrel{\eqref{R_and_L}}{=}& -(\delta_{js}\tilde p_{ir})(-\delta_{ks} \tilde q_{rl})-(-\delta_{ir}\tilde p_{sj})(\delta_{lr} \tilde q_{ks})\\
&& -(-\delta_{ir}\tilde p_{sj})(-\delta_{ks} \tilde q_{rl}) + (\delta_{js}\tilde p_{ir})(\delta_{lr} \tilde q_{ks})\\
&=& \delta_{jk}\tilde p_{ir} \tilde q_{rl} + \delta_{il}\tilde p_{sj} \tilde q_{ks}
 - \tilde p_{kj} \tilde q_{il} + \tilde p_{il} \tilde q_{kj} \ \stackrel{\eqref{values_four}}{=} \ \bracket{\tilde p_{ij}}{\tilde{q}_{kl}}_{P}.
\end{eqnarray*}

 Theorem \ref{P'_P} follows now from the two cases  above and  the following  claim.\\

\noindent
\textbf{Claim.}
\emph{Let $\Sigma_0$ be a compact connected oriented  surface with base point $\ast\in \partial \Sigma_0$
which is  the \lq\lq boundary connected sum"
of two compact connected oriented surfaces $\Sigma_1$ and $\Sigma_2$, see Figure \ref{gluing}.
For each $i\in \{0,1,2\}$,
let $P_i$ be the quasi-Poisson bivector field on
$\mathcal H_i = \Hom(\pi_1(\Sigma_i,\ast), G)$ produced by Theorem \ref{CQPShahaha}.
Then  $\mathcal H_0 = \mathcal H_1 \times \mathcal H_2$ and  $P_0=P_1 \circledast P_2$.}\\

\begin{figure}
\labellist \small \hair 2pt
\pinlabel { $\circlearrowleft$} at 25 25
\pinlabel {$\circlearrowleft$} at 585 25
\pinlabel {$\Sigma_1$} at 135 107
\pinlabel {$\Sigma_2$} at 500 107
\pinlabel {\large $\ast$}  at 310 2
\endlabellist
\centering
\includegraphics[scale=0.25]{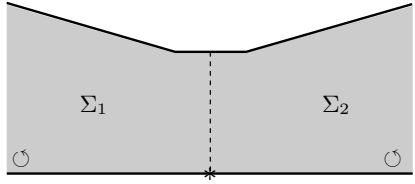}
\caption{The boundary connected  sum $\Sigma_0$  of $\Sigma_1$ and $\Sigma_2$}
\label{gluing}
\end{figure}

For   $x\in \pi_1(\Sigma_0,\ast)$ and  $i,j\in \{1,\dots,N\}$,
let $\tilde x_{ij} \in  C^{\infty}(\mathcal H_0)$ be the function
carrying any  $h \in \mathcal H_0$ to the $(i,j)$-th coefficient of the matrix $h(x)$.
In order to verify the claim,  it is enough to check that
\begin{equation}\label{the_same}
\bracket{\tilde x_{ij}}{\tilde y_{kl}}_{P_0} = \bracket{\tilde x_{ij}}{\tilde y_{kl}}_{P_1 \circledast P_2}
\end{equation}
for any $x, y\in \pi_1(\Sigma_0,\ast)$ and $i,j,k,l\in \{1,\dots,N\}$.
If both $x,y$ belong to $\pi_1(\Sigma_j,\ast)$ for some $j\in \{1,2\}$,
then \eqref{the_same} follows from the equality
$\bracket{\tilde x_{ij}}{\tilde y_{kl}}_{P_0} = \bracket{\tilde x_{ij}}{\tilde y_{kl}}_{P_j}$
(since $\double{x}{y}^s$ can be fully  computed in $\Sigma_j$)
and the fact that $\bracket{\tilde x_{ij}}{\tilde y_{kl}}_{P_1 \circledast P_2} = \bracket{\tilde x_{ij}}{\tilde y_{kl}}_{P_j}$
(since the projection $\mathcal H_1 \times \mathcal H_2 \to \mathcal H_j$
carries $P_1 \circledast P_2$ to $P_j$). It remains to consider the case where  $x\in \pi_1(\Sigma_1,\ast)$ and $y\in \pi_1(\Sigma_2,\ast)$.

Since $\double{x}{y}^\eta=0$, we have
$
\double{x}{y}^s
= 1\otimes x y + y x \otimes 1 - x \otimes y - y \otimes x
$
so that
\begin{equation}\label{P_0}
\bracket{\tilde x_{ij}}{\tilde y_{kl}}_{P_0} =
\delta_{kj}  \tilde x_{ir} \tilde y_{rl} + \tilde y_{ks} \tilde x_{sj} \delta_{il} - \tilde x_{kj} \tilde y_{il} - \tilde y_{kj}  \tilde x_{il}.
\end{equation}
Besides we have  
$$
\bracket{\tilde x_{ij}}{\tilde y_{kl}}_{P_1 \circledast P_2} \stackrel{\eqref{fusion}}{=}
 -\bracket{\tilde x_{ij}}{\tilde y_{kl}}_{\psi_{\mathcal H_1 \times \mathcal H_2}}
\stackrel{\eqref{psi}}{=}  - (f_{rs})_{\mathcal{H}_1}(\tilde x_{ij})\ (f_{sr})_{\mathcal{H}_2}(\tilde y_{kl}).
$$  
At any point $m\in \mathcal H_1$,
\begin{eqnarray*}
(f_{rs})_{\mathcal{H}_1}(\tilde x_{ij})(m) &=&
 \Big. \frac{d}{dt} \Big\vert_{ t=0} \tilde  x_{ij}\left(e^{-tf_{rs}}\, m\, e^{tf_{rs}}\right) \\
 &=& -\tilde x_{ij}(f_{rs}m) + \tilde x_{ij}(mf_{rs}) \ = \ - \delta_{ir}\tilde{x}_{sj}(m) + \delta_{js} \tilde x_{ir}(m)
\end{eqnarray*}
and, by a similar computation, at any point $n\in \mathcal H_2$,
$$
(f_{sr})_{\mathcal{H}_2}(\tilde y_{kl})(n) =
- \delta_{ks}\tilde{y}_{rl}(n) + \delta_{lr} \tilde y_{ks}(n).
$$
Therefore
\begin{eqnarray*}
\bracket{\tilde x_{ij}}{\tilde y_{kl}}_{P_1 \circledast P_2} &=&
- \left(- \delta_{ir}\tilde{x}_{sj} + \delta_{js} \tilde x_{ir}\right)\
\left(- \delta_{ks}\tilde{y}_{rl}+ \delta_{lr} \tilde y_{ks}\right)\\
&=&
- \tilde{x}_{kj} \tilde{y}_{il} + \delta_{jk} \tilde x_{ir} \tilde{y}_{rl}
+ \delta_{il} \tilde{x}_{sj} \tilde y_{ks} -  \tilde x_{il}  \tilde y_{kj}
 \stackrel{ \eqref{P_0}}{=}  \bracket{\tilde x_{ij}}{\tilde y_{kl}}_{P_0}\!.
\end{eqnarray*}

\vspace{-0.5cm}
\end{proof}


\begin{thebibliography}{CJKLS}



\bibitem[AKsM]{AKsM}
A. Alekseev, Y. Kosmann-Schwarzbach, E. Meinrenken,
\emph{Quasi-Poisson manifolds.}
Canad. J. Math. 54 (2002), no. 1, 3--29.

\bibitem[AMM]{AMM}
A. Alekseev, A. Malkin,  E. Meinrenken,
\emph{Lie group valued moment maps.} 
J. Differential Geom. 48 (1998), no. 3, 445--495.

 
\bibitem[Au]{Au}
M. Audin, 
\emph{Lectures on gauge theory and integrable systems.}
 Gauge theory and symplectic geometry (Montreal, PQ, 1995), 1--48, 
 NATO Adv. Sci. Inst. Ser. C Math. Phys. Sci., 488, Kluwer Acad. Publ., Dordrecht, 1997.
  
 
\bibitem[CS]{CS}
M. Chas, D. Sullivan,
\emph{String Topology.} 
Preprint (1999) \texttt{arXiv:}\texttt{math/9911159}.
 

\bibitem[Cb]{Cb}
W. Crawley-Boevey,
\emph{Poisson structures on moduli spaces of representations.}
J. Algebra 325 (2011), 205--215.

\bibitem[Do]{Do}
I. V. Dolgachev,
\emph{Introduction to algebraic geometry.}
Lecture notes available at
\texttt{http://www.math.lsa.umich.edu/$\sim$idolga/lecturenotes.html}

\bibitem[FR]{FR}
V. V. Fock, A. A. Rosly,
\emph{Poisson structure on moduli of flat connections on Riemann surfaces and the $r$-matrix.}
(Russian) Moscow Seminar in Math. Physics.
 English translation: Amer. Math. Soc. Transl. Ser. 2, 191, 67--86 (1999).

\bibitem[Go1]{Go1}
W. M. Goldman,
\emph{ The symplectic nature of fundamental groups of surfaces.}
Adv. in Math. 54 (1984), no. 2, 200--225.

\bibitem[Go2]{Go2}
W. M. Goldman,
\emph{Invariant functions on Lie groups and Hamiltonian flows of surface group representations.}
Invent. Math. 85 (1986), no. 2, 263--302.

\bibitem[Go3]{Go3}
W. M. Goldman,
\emph{Mapping class group dynamics on surface group representations.}
Problems on mapping class groups and related topics, 189--214, Proc. Sympos. Pure Math.,
74, Amer. Math. Soc., Providence, RI, 2006.

\bibitem[GHJW]{GHJW}
K. Guruprasad, J. Huebschmann, L. Jeffrey, A. Weinstein,
\emph{Group systems, groupoids, and moduli spaces of parabolic bundles.}
Duke Math. J. 89 (1997), no. 2, 377--412.

\bibitem[Hu]{Hu}
J. Huebschmann,
\emph{Poisson geometry of certain moduli spaces.}
Rend. Circ. Mat. Palermo (2) Suppl. No. 39 (1996), 15--35.

\bibitem[Ja]{Ja}
J. C. Jantzen,
\emph{Representations of algebraic groups.}
Second edition. Mathematical Surveys and Monographs, 107.
American Mathematical Society, Providence, RI, 2003.

\bibitem[KK1]{KK}
N. Kawazumi, Y. Kuno,
\emph{The logarithms of Dehn twists.}
Preprint (2010) \texttt{arXiv:}\texttt{1008.5017}.

\bibitem[KK2]{KK_intersections}
N. Kawazumi, Y. Kuno,
\emph{Intersections of curves on surfaces and their applications to mapping class groups.}
Preprint (2011) \texttt{arXiv:}\texttt{1112.3841}.

\bibitem[La]{La}
S. Lawton,
\emph{Poisson geometry of ${\rm SL}(3,\CC)$-character varieties relative to a surface with boundary.}
 Trans. Amer. Math. Soc. 361 (2009), no. 5, 2397--2429.

\bibitem[LbP]{LbP}
L. Le Bruyn, C. Procesi,
\emph{Semisimple representations of quivers.}
Trans. Amer. Math. Soc. 317 (1990), no. 2, 585--598.

\bibitem[Mar]{Mar}
C.--M. Marle, \emph{The Schouten--Nijenhuis bracket and interior
products.} J. Geom. Phys. 23 (1997), no. 3-4, 350--359.

\bibitem[Mas]{Mas}
G. Massuyeau, \emph{Infinitesimal Morita homomorphisms and the
tree-level of the LMO invariant.} Bull. Soc. Math. France 140:1
(2012), 101--161.

\bibitem[MT]{MT}
G. Massuyeau, V. Turaev,
\emph{Fox pairings and generalized Dehn twists.}
Preprint (2011) \texttt{arXiv:}\texttt{1109.5248}.

 
\bibitem[Pa]{Pa}
C. D. Papakyriakopoulos, 
\emph{Planar regular coverings of orientable closed surfaces.} 
Knots, groups, and $3$-manifolds (Papers dedicated to the memory of R. H. Fox), 
261--292. Ann. of Math. Studies, No. 84, Princeton Univ. Press, Princeton, N.J., 1975.

\bibitem[Pe]{Pe}
B. Perron,
\emph{A homotopic intersection theory on surfaces: applications to mapping class group and braids.}
Enseign. Math. (2) 52 (2006), no. 1-2, 159--186.
 

\bibitem[Tu]{Tu}
V. G. Turaev,
\emph{Intersections of loops in two-dimensional manifolds.}
(Russian) Mat. Sb. 106(148) (1978),   566--588.
English translation: Math. USSR, Sb. 35 (1979), 229--250.

\bibitem[VdB]{VdB}
M. Van den Bergh,
\emph{Double Poisson algebras.}
Trans. Amer. Math. Soc. 360 (2008), no. 11, 5711--5769.

\bibitem[Wo]{Wo}
S. Wolpert,
\emph{On the symplectic geometry of deformations of a
hyperbolic surface.} Ann. of Math. (2) 117 (1983), 207--234.





                     \end{thebibliography}
                     \end{document}